\def\th@plain{\slshape}                                        %
\def\paragraph{\@startsection{paragraph}{4}%
  \z@\z@{-\fontdimen2\font}%
  {\normalfont\bfseries}}
\numberwithin{equation}{section}
\newcommand*{\diff}{\mathop{}\!\mathrm{d}}
\theoremstyle{plain}
\newtheorem{thm}{Theorem}[section]
\newtheorem{prop}[thm]{Proposition}
\newtheorem{lemma}[thm]{Lemma}
\theoremstyle{definition}
\newtheorem{defin}[thm]{Definition}
\newtheorem{example}[thm]{Example}
\theoremstyle{remark}
\newtheorem{remark}{Remark}
\newcommand{\C}{\mathbb{C}}
\newcommand{\R}{\mathbb{R}}
\newcommand{\Z}{\mathbb{Z}}
\newcommand{\N}{\mathbb{N}}
\newcommand{\T}{\mathbb{T}}
\newcommand{\bigslant}[2]{{\raisebox{.2em}{$#1$}\left/\raisebox{-.2em}{$#2$}\right.}}
\newcommand{\norma}[1]{\lVert#1\rVert}
\newcommand{\mnorm}[1]{\left\vert\mkern-1mu\left\vert\mkern-1mu\left\vert #1 \right\vert\mkern-1mu\right\vert\mkern-1mu\right\vert}
\newcommand{\modulo}[1]{\left\lvert#1\right\rvert}
\newcommand{\newword}[1]{\textsl{#1}}
\newcommand{\clzn}{c_{l,n}}
\newcommand{\clznp}{c_{l,n}\rq{}}
\newcommand{\clznn}{c_{l,n,N}}
\newcommand{\ntyx}{\underline{n}_t(\widehat{\mathbf{x}})}
\newcommand{\ntjx}{\underline{n}_{t_j}(\widehat{\mathbf{x}})}
\newcommand{\ptx}[1]{\mathcal{P}_{#1}(t, \widehat{\mathbf{x}})}
\newcommand{\one}{{\rm 1\mskip-4mu l}}
\DeclareMathOperator{\misura}{Leb}
\DeclareMathOperator{\image}{Im}
\DeclareMathOperator{\Id}{Id}
\DeclareMathOperator{\abel}{ab}
\begin{document}

\bibliographystyle{plain}

\sloppy

\title[Mixing for suspension flows over skew-translations]{Mixing for suspension flows over skew-translations and time-changes of quasi-abelian filiform nilflows}

\author[D.~Ravotti]{Davide Ravotti}
\address{School of Mathematics\\
University of Bristol\\
University Walk\\
BS8 1TW Bristol, UK}
\email{davide.ravotti@bristol.ac.uk}

\begin{abstract}
We consider suspension flows over uniquely ergodic skew-translations on a $d$-dimensional torus $\T^d$, for $d \geq 2$. We prove that there exists a set $\mathscr{R}$ of smooth functions, which is dense in the space $\mathscr{C}(\T^d)$ of continuous functions, such that every roof function in $\mathscr{R}$ which is not cohomologous to a constant induces a mixing suspension flow. We also construct a dense set of mixing examples which is explicitly described in terms of their Fourier coefficients. In the language of nilflows on nilmanifolds, our result implies that, for every uniquely ergodic nilflow on a quasi-abelian filiform nilmanifold, there exists a dense subspace of smooth time-changes in which mixing occurs if and only if the time-change is not cohomologous to a constant. This generalizes a theorem by Avila, Forni and Ulcigrai (J.~Diff.~Geom., 2011) for the classical Heisenberg group.
\end{abstract}


\thanks{\emph{2010 Math.~Subj.~Class.}: 37A25, 37C40.}

\maketitle

\section{Introduction}

In this paper, we investigate the ergodic properties of a class of \newword{parabolic} flows, i.e.~flows for which the divergence of nearby points is polynomial in time. Examples of parabolic flows in the homogeneous set-up are given by the classical horocycle flow (more generally, by unipotent flows on semisimple Lie groups) and by nilflows on nilmanifolds.
A \newword{nilmanifold} $M$ is the quotient of a nilpotent Lie group $G$ by a lattice $\Lambda$, namely a discrete subgroup such that the quotient space $M=\Lambda \backslash G$ has finite Haar measure. A flow on $M$ induced by a left-invariant vector field is called a \newword{nilflow}. The study of nilflows has also applications to number theory, for example to the distribution of fractional parts of polynomials and to estimates of theta sums (see, e.g., \cite{furstenberg:unique, furstenberg:thetasums, flaminio:nilflows}). The ergodic properties of nilflows are well-understood: almost every nilflow, although it is uniquely ergodic, is not mixing; see \S\ref{s:2} for a detailed discussion.

Very little is known for general smooth parabolic flows, even for smooth perturbations of homogeneous ones. We consider a simple class of smooth perturbations given by performing a \newword{time-change}. Roughly speaking, a smooth time-change of a flow is obtained by keeping the orbits fixed and varying smoothly the speed of the points; a precise definition is given in \S\ref{s:2.1}. 
A natural question is which ergodic properties persist under perturbations: ergodicity is preserved by time-changes, but mixing is more delicate. The case of time-changes of the horocycle flow and of unipotent flows on semisimple Lie groups  have been studied by many authors, including Marcus \cite{marcus:horocycle}, Forni and Ulcigrai \cite{forni:horocycle}, Tiedra de Aldecoa \cite{tiedra:spectrum}, and Simonelli \cite{simonelli:acspectrum}; in this paper, building on a previous work by Avila, Forni and Ulcigrai \cite{avila:heisenberg}, we address the question of mixing for time-changes of nilflows.  

The simplest non-abelian nilpotent group is the Heisenberg group $H$ consisting of $3 \times 3$ upper triangular unipotent matrices, which is 3-dimensional and 2-step nilpotent. Let $\{\varphi_t\}_{t \in \R}$ be a uniquely ergodic nilflow on a nilmanifold $M = \Lambda \backslash H$. There exists a cross-section $\Sigma \subset M$ isomorphic to the 2-dimensional torus $\T^2$ such that the Poincaré map $T \colon \T^2 \to \T^2$ is a uniquely ergodic skew-translation of the form $T(x,y) = (x+ \alpha, x + y + \beta)$, for some $\alpha, \beta \in \R$, and $\{\varphi_t\}_{t \in \R}$ is isomorphic to the suspension flow over $(\T^2, T)$ with a constant roof function. 

Performing a time-change of the nilflow $\{\varphi_t\}_{t \in \R}$ amounts to changing the roof function. Clearly, constant time-changes do not produce mixing, therefore we want to rule out from our analysis all time-changes which behave like constants. This class is given by time-changes \newword{cohomologous to a constant}, see \S\ref{s:introsf}. Avila, Forni and Ulcigrai in \cite{avila:heisenberg} proved that there exists a set $\mathscr{R}$ of smooth functions which is dense in $\mathscr{C}(\T^2)$ such that for all positive $\Psi \in \mathscr{R}$, the suspension flow over $(\T^2,T)$ with roof function $\Psi$ is mixing if and only if $\Psi$ is not cohomologous to a constant. Moreover, they showed that this condition can be checked explicitly. They also prove an analogous result for smooth time-changes of the original Heisenberg nilflow. In this paper, we generalize these results to higher dimensions, see Theorem \ref{th:intro1} and \ref{th:intro2}.


\subsection{Suspension flows over skew-translations}\label{s:introsf}

Let us consider a $d$-dimensional torus $\T^d$ for some $d \geq 2$. We denote points in $\T^d$ by row vectors $\mathbf{x} = (x_1, \dots, x_d)$. Let $\misura_d$ be the $d$-dimensional Lebesgue measure. Let $T \colon \T^d \to \T^d$ be a skew-translation of the form $T \mathbf{x} = \mathbf{x} A + \mathbf{b}$, where $A= (a_{i,j})_{1 \leq i,j \leq d}$ is a $d \times d$ upper-triangular unipotent matrix with integer coefficients such that $A \neq \Id$ and $\mathbf{b}=(b_1, \dots, b_d) \in \T^d$; namely, for $\mathbf{x} = (x_1, \dots, x_d) \in \T^d$, define
\begin{equation}\label{eq:dot}
T(x_1, \dots, x_d) = (x_1, \dots, x_d)
\begin{pmatrix}
1 & a_{1,2} & \cdots & a_{1,d} \\
\ & 1 & \ddots & \vdots \\
\ & \  & \ddots & a_{d-1,d}\\
\ & \ & \ & 1
\end{pmatrix} 
+ (b_1, \dots, b_d).
\end{equation}
We suppose that the skew-translation $T \colon \T^d \to \T^d$ is ergodic (equivalently, uniquely ergodic~\cite{furstenberg:unique}). 

Any positive continuous function $\Psi \colon \T^d \to \R_{>0}$ defines a \newword{suspension flow} in the following way: consider the equivalence relation $\sim_{\Psi}$ on $\T^d \times \R$ generated by all the pairs $\{ (\mathbf{x} , \Psi(\mathbf{x} )), (T\mathbf{x} ,0) \}$. 
The suspension flow $\{T^{\Psi}_t\}_{t \in \R}$ on the space $\bigslant{\T^d \times \R}{\sim}_{\Psi}$ is defined by moving at unit speed in the vertical direction for time $t$.
If we denote by $S_n(\Psi)$ the $n$-th Birkhoff sum along the orbits of $T$, namely
\begin{equation}\label{eq:birksum}
S_n(\Psi)(\mathbf{x} ) := 
\begin{cases}
\sum_{i=0}^{n-1} \Psi \circ T^i \mathbf{x} & \text{ if } n>0,\\
0 & \text{ if } n=0,\\
-\sum_{i=-1}^{n} \Psi \circ T^i \mathbf{x} & \text{ if } n<0,
\end{cases}
\end{equation}
then we can write an explicit formula for the suspension flow
\begin{equation}\label{eq:sflow}
T^{\Psi}_t(\mathbf{x} ,r) = \left( T^{n_t(\mathbf{x} ,r)}\mathbf{x} , r+t-S_{n_t(\mathbf{x} ,r)}(\Psi)(\mathbf{x} ) \right),
\end{equation}
where $n_t(\mathbf{x} ,r)$ denotes the maximum $n \geq 0$ such that $S_n(\Psi)(\mathbf{x} ) \leq r+t$.

For every roof function $\Psi$, unique ergodicity of $\{T^{\Psi}_t\}_{t \in \R}$ is equivalent to unique ergodicity of $T$; on the other hand, mixing for the suspension flow is a delicate question. 
Suppose that there exists a measurable (resp.~smooth) function $u \colon \T^d \to \R$ such that 
$$
\Psi - \int_{\T^d} \Psi \diff \misura_d=u \circ T - u.
$$
We say that $u \circ T - u$ is a \newword{measurable} (resp.~\newword{smooth}) \newword{coboundary for $T$} and $\Psi$ and the constant $\int \Psi$ are \newword{measurably} (resp.~\newword{smoothly}) \newword{cohomologous w.r.t.~$T$}. 
In this case, the map $\zeta \colon (\mathbf{x} ,r) \mapsto (\mathbf{x} ,r+u(\mathbf{x} ))$ defined on $\T^d \times \R$ descends to the quotient and maps the invariant measure for $\{T^{\Psi}_t\}_{t \in \R}$ to the invariant measure of the suspension flow with constant roof function $\int \Psi$.
In particular, any roof function $\Psi$ cohomologous to a constant induces a non-mixing flow. Our main result, Theorem \ref{th:intro1} below, shows that, within a dense subspace $\mathscr{R}$, the condition of not being cohomologous to a constant is also sufficient for mixing of the suspension flow.

Determining whether a function is a measurable coboundary is not, in general, effectively possible. An exception is the case of a 2-dimensional skew-translation treated in \cite{avila:heisenberg}, where measurable coboundaries are explicitly characterized in terms of invariant distributions for the Heisenberg nilflow. At present, this result appears not to be generalizable to higher dimensions, since it relies on sharp estimates on Weyl sums (see \cite{flaminio:nilflows} and references therein), which are available only for degree two. However, exploiting the 2-dimensional case, we construct a dense and explicitly described set of mixing examples for a large class of higher dimensional skew-traslations, which includes the ones arising from filiform nilflows, see \S\ref{s:2.1}.
 
\begin{thm}\label{th:intro1}
\begin{itemize}
\item[(a)] There exists a subspace $\mathscr{R}$ of smooth functions, which is dense in $\mathscr{C}(\T^d)$ w.r.t.~$\norma{\cdot}_{\infty}$, such that for all positive $\Psi \in \mathscr{R}$ the suspension flow over $(\T^d, T)$ with roof function $\Psi$ is mixing if and only if $\Psi$ is not cohomologous to a constant. 
\item[(b)] If the entries above the diagonal are non-zero, namely if $a_{i,i+1} \neq 0$ for $i=1, \dots, d-1$ in \eqref{eq:dot}, then there exists a dense set $\mathscr{M}$ of mixing examples which is explicitly described in terms of their Fourier coefficients.
\end{itemize}
\end{thm}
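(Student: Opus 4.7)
The strategy is to extend the two-dimensional Heisenberg argument of Avila, Forni and Ulcigrai to higher dimensions by exploiting the polynomial structure of the iterates of the matrix $A$. The core mixing mechanism will be a Ratner-type shearing criterion for suspension flows: to obtain decay of correlations for $\{T^\Psi_t\}_{t \in \R}$ it will suffice to show that, for typical $\mathbf{x} \in \T^d$ and a well-chosen transverse direction $\mathbf{v}$, the directional derivative $\partial_{\mathbf{v}} S_n(\Psi)(\mathbf{x})$ diverges as $n \to \infty$, with enough uniformity to run a van der Corput estimate on correlation integrals. The hypothesis that $\Psi$ is not cohomologous to a constant is to be used precisely to prevent the cancellation of these Birkhoff-sum derivatives.

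Writing $A = \Id + N$ with $N$ nilpotent upper-triangular, the entries of $A^j$ are polynomials in $j$ whose degrees are determined by $N$; in the filiform case $a_{i,i+1} \neq 0$ for all $i$, the degree of $(A^j)_{1,d}$ attains the maximum $d-1$. Consequently,
\[
\partial_{x_1} S_n(\Psi)(\mathbf{x}) = \sum_{j=0}^{n-1} \sum_{k=1}^{d} (A^j)_{1,k}\, (\partial_{x_k}\Psi)(T^j \mathbf{x})
\]
is a generalised Weyl sum with polynomial phases, whose leading behaviour is driven by Fourier modes $\hat\Psi(\mathbf{k})$ with $k_d \neq 0$. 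I would Fourier-decompose $\Psi$ and, via stationary-phase and van der Corput estimates on each mode, show that either $\Psi$ has trivial high-frequency content along $x_d$ (in which case a telescoping identity makes it cohomologous to a constant), or this sum grows polynomially in $n$, producing the desired shearing.

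This motivates the choice of $\mathscr{R}$ for part (a) as the subspace of smooth functions whose Fourier expansion is supported on modes where both the Weyl-sum lower bound and the cohomology obstruction can be read off algebraically from the Fourier coefficients; density of $\mathscr{R}$ in $\mathscr{C}(\T^d)$ with respect to $\norma{\cdot}_\infty$ should follow by approximation by trigonometric polynomials supported in the admissible frequency range. For part (b), in the filiform case $a_{i,i+1}\neq 0$ the sharper polynomial degrees make the construction fully explicit: I would take $\mathscr{M}$ to consist of finite sums $\Psi = c + \sum_{\mathbf{k} \in F} \hat\Psi(\mathbf{k})\, e^{2\pi i \langle \mathbf{k}, \mathbf{x}\rangle}$ with $F$ a finite set of modes having $k_d \neq 0$, verify mixing by a direct stationary-phase computation on each term, and obtain density by a standard approximation argument.

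The main obstacle will be the coboundary equation: in dimension two, AFU rely on sharp Weyl-sum estimates and on the classification of invariant distributions for the Heisenberg nilflow, tools that are simply not available for higher nilpotency steps (as the introduction itself notes). Rather than attempting to characterise coboundaries in the full space, the plan is to engineer $\mathscr{R}$ so that, within $\mathscr{R}$, being cohomologous to a constant is equivalent to a transparent algebraic condition on the Fourier coefficients of $\Psi$, and to ensure that the shearing argument goes through whenever this condition fails. Balancing density of $\mathscr{R}$ against tractability of the coboundary condition is, I expect, the most delicate aspect of the proof.
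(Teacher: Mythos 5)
Your proposed engine --- generalized Weyl sums with polynomial phases, handled by stationary phase and van der Corput estimates --- is precisely the route that the paper's introduction flags as unavailable in dimension $d>2$. The AFU argument in dimension $2$ does indeed extract shearing from a quantitative lower bound on Weyl sums, but the sharp estimates it relies on (from Flaminio--Forni) are only known for degree-two phases. Once $a_{i,i+1}\neq0$ for all $i$, the phase $(A^j)_{1,d}$ is a polynomial of degree $d-1$ in $j$, and you would need lower bounds for degree-$(d-1)$ Weyl sums along the orbit of an ergodic skew-translation, uniformly in $\mathbf{x}$ --- this is exactly the gap the paper is constructed to sidestep, and your plan does not offer a substitute for it. The circularity you anticipate in the last paragraph (``engineer $\mathscr{R}$ so the coboundary condition is readable'') is real, but your proposed resolution (restrict the Fourier support) does not resolve it, because the obstruction is analytic, not combinatorial.

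What the paper actually does is avoid quantitative Weyl estimates entirely, by two devices you do not have. The first is an inductive ``wrapping'' mechanism (Proposition~\ref{th:induct}): if the roof $\psi\circ\pi$ is constant in $x_d$ and there is an integer vector $\mathbf{v}$ with $\mathbf{v}A=\mathbf{v}+a\mathbf{e}_d$, then segments parallel to $\mathbf{v}$ are sheared under $T$ so as to wrap once around the $\mathbf{e}_d$-circle after time $t$, and one shows that mixing of $\{T_t^{\psi\circ\pi}\}$ on $\T^d$ is \emph{equivalent} to mixing of the $(d-1)$-dimensional factor flow. This allows a reduction, dimension by dimension, using only soft equidistribution (Birkhoff theorem), never a Weyl-sum estimate. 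The second device is the definition of $\mathscr{R}$: not a Fourier support condition, but a condition on the canonical decomposition $\Psi=\psi_{d_0}\circ\pi_{d_0}+\sum_i\Psi_i^\perp\circ\pi_i$, requiring each $\Psi_i^\perp$ to be a measurable coboundary for the $i$-th factor and $\psi_{d_0}$ to be smoothly cohomologous to a constant; coboundaries can be peeled off by a conjugation (Remark~\ref{rk:mixcob}) and the wrapping proposition applied at each step. Density of $\mathscr{R}$ is then a Hahn--Banach argument using unique ergodicity of $T$ (Lemma~\ref{th:densecob}), not an approximation by band-limited polynomials. Once reduced to the base case $\dim E_k=1$, the stretch of Birkhoff sums is established (Theorem~\ref{th:stretchingroof}) by a qualitative decoupling lemma (a Gottschalk--Hedlund argument plus a geometric localization), not by van der Corput.

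For part (b), your set $\mathscr{M}$ of finite Fourier sums with $k_d\neq0$ is not correct as stated: many such trigonometric polynomials \emph{are} coboundaries and so produce non-mixing flows, and conversely not every non-coboundary of that form is accessible to a ``direct stationary-phase computation.'' The paper's $\mathscr{M}$ consists of a $2$-dimensional roof $\psi_2$ known to be mixing by AFU, plus terms $\Psi_i^\perp$ that are \emph{explicit smooth coboundaries} for the factors $T_i$; coboundaries are then characterized Fourier-coefficient-wise via the orbit structure of $A_i^T$ on $\Z^i$ (Lemma~\ref{th:explicitcob}), and the wrapping proposition lifts mixing up the chain of factors. So the roles are inverted relative to your proposal: the explicit Fourier condition identifies the coboundary part, not the mixing part, and the mixing is imported from dimension~$2$ rather than proved by a new Weyl-sum estimate.
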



\subsection{Time-changes of quasi-abelian filiform nilflows}\label{s:2.1}

From Theorem \ref{th:intro1} we deduce an analogous statement for time-changes of \newword{quasi-abelian filiform} nilflows, which are nilflows on a class of higher dimensional and higher step nilpotent groups. We recall all the relevant definitions, referring the reader to \cite[\S1]{gallot:riemannian} and to \cite{corwin:nilpotent} for an introduction to homogeneous flows on Lie groups and for a background on nilpotent groups respectively.

Let $G$ be a connected, simply connected Lie group and let $\mathfrak{g}$ be its Lie algebra. For any vector $\mathbf{w} \in \mathfrak{g}$ it is possible to define a vector field $W$ over $G$ in the following way: if $L_g \colon h \mapsto gh$ denotes the left-multiplication by $g \in G$, we set $W_g = (L_g)_{\ast} \mathbf{w}$, where $(L_g)_{\ast} = (L_g)_{\ast}(\Id) \colon \mathfrak{g} \to T_gG$ is the differential of $L_g$ at the identity.
The vector field $W$ is left-invariant, i.e.~$(L_g)_{\ast}W = W$ for all $g \in G$ and indeed the map $\mathbf{w} \mapsto W$ is a bijection between $\mathfrak{g}$ and the set of left-invariant vector fields over $G$ \cite[Proposition 1.72]{gallot:riemannian}. By a little abuse of notation, we will identify $\mathbf{w}$ with $W$.

The flow $\{\varphi^{\mathbf{w}}_t\}_{t \in \R}$ defined by $\mathbf{w}$ is explicitly given by $\varphi^{\mathbf{w}}_t(g) = g \exp(t\mathbf{w})$.
Given a lattice $\Lambda \leq G$, the flow $\{\varphi^{\mathbf{w}}_t\}_{t \in \R}$ induces a flow on the quotient $\Lambda \backslash G$; moreover, since $G$ is unimodular \cite[Proposition 9.20]{einsiedler:ergodic}, 
$\{\varphi^{\mathbf{w}}_t\}_{t \in \R}$ preserves both the Haar measure $\mu$ on $G$ and its push-forward onto the quotient $\Lambda \backslash G$. We will denote the latter with $\mu$ as well and we will assume it is appropriately normalized, namely $\mu (\Lambda \backslash G)=1$.

Denote by $[\ , \ ]_G$ the commutator in $G$. We recall that a group $G$ is $n$-step nilpotent if $G^{(n+1)} = \{\Id\}$ and $G^{(n)} \neq \{\Id\}$, where $G^{(1)} = G$ and $G^{(i+1)} = [G,G^{(i)}]_G$.
Setting $\mathfrak{g}^{(1)} = \mathfrak{g}$ and $\mathfrak{g}^{(i+1)} = [\mathfrak{g},\mathfrak{g}^{(i)}]$, where $[\ ,\ ]$ denotes the Lie bracket in $\mathfrak{g}$, it is a well-known fact that $\mathfrak{g}^{(i)}$ is the Lie algebra of $G^{(i)}$. In particular, $\mathfrak{g}^{(n+1)} = \{ 0 \}$ and $\mathfrak{g}^{(n)} \neq \{0\}$; we say that $\mathfrak{g}$ is a $n$-step nilpotent Lie algebra.
Notice that in a $n$-step nilpotent algebra the centre is always nontrivial, more precisely $\mathfrak{g}^{(n)} \subseteq \mathfrak{z}(\mathfrak{g})$.

In this paper, we focus our attention on \newword{quasi-abelian filiform} groups $F_d$, which we introduce through their Lie algebras. 
The quasi-abelian filiform algebra $\mathfrak{f}_d$ of $F_d$ is the $(d+1)$-dimensional nilpotent Lie algebra spanned by $\mathcal{F}_d=\{\mathbf{f}_0, \dots, \mathbf{f}_{d} \}$ such that the only nontrivial brackets are $[\mathbf{f}_0, \mathbf{f}_i] = \mathbf{f}_{i+1}$ for $1 \leq i \leq d-1$. Then, $\mathfrak{f}_d$ is $d$-step nilpotent and we can represent it as a matrix algebra as
$$
x\mathbf{f}_0+\sum_{i=1}^d y_i \mathbf{f}_{i} \mapsto
\begin{pmatrix}
0 & x & \ & \ & y_d \\
\ & 0 & \ddots & \ & \vdots\\
\ & \  & \ddots & x & y_2\\
\ & \ & \ & 0 & y_1\\
\ & \ & \ & \ & 0
\end{pmatrix}.
$$
We remark that $F_1 \simeq \R^2$ and $F_2$ is the Heisenberg group $H$.

Let $F=F_d$ be a quasi-abelian filiform group and let $\Lambda < F$ be a lattice; the quotient $M=\Lambda \backslash F$ is said to be a \newword{quasi-abelian filiform nilmanifold} and every flow $\{\varphi^{\mathbf{w}}_t\}_{t \in \R}$ as above is called \newword{quasi-abelian filiform nilflow}. Almost every quasi-abelian filiform nilflow is uniquely ergodic but not weak mixing, see \S\ref{s:2}.

We say that a flow $\{ \varphi^{\alpha}_t \}_{t \in \R}$ on $M$ is a \newword{continuous} (resp.~\newword{smooth}) \newword{time-change of} $\{{\varphi}_t\}_{t \in \R}$ if there exists a continuous (resp.~smooth) function $\alpha \colon M \to \R_{>0}$ such that $\{ \varphi^{\alpha}_t \}_{t \in \R}$ is induced by $\alpha \mathbf{w}$ (recall that $\mathbf{w}$ is the vector field inducing $\{\varphi_t\}_{t \in \R}$). We say that $\alpha$ is the \newword{infinitesimal generator} of the time-change. The time-change given by $\alpha$ is cohomologous to a constant if there exists $\beta \colon M \to \R$ such that 
$$
\alpha - \int_M \alpha \diff \mu = \mathbf{w} \beta.
$$
As for suspension flows, time-changes cohomologous to constants are not mixing.
From Theorem \ref{th:intro1} we deduce the following result.

\begin{thm}\label{th:intro2}
Let $M = \Lambda \backslash F_d$ be a quasi-abelian filiform nilmanifold for some $d \geq 2$ and consider a uniquely ergodic quasi-abelian filiform nilflow on $M$. There exists a set of smooth time-changes, which is dense in the set of continuous time-changes, such that every element is mixing if and only if it is not cohomologous to a constant. 

Moreover, the set of mixing time-changes is dense in the set of continuous time-changes.
\end{thm}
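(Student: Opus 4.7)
The strategy is to reduce Theorem~\ref{th:intro2} to Theorem~\ref{th:intro1} by realizing a quasi-abelian filiform nilflow as a suspension flow over a skew-translation of $\T^d$ and then transporting roof functions to infinitesimal generators of time-changes.

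First, I would construct an explicit cross-section. Writing $\mathbf{w}=w_0\mathbf{f}_0+\sum_{i\geq 1}w_i\mathbf{f}_i$, unique ergodicity of the nilflow forces $w_0\neq 0$. Consider the closed, abelian, connected normal subgroup $N=\exp(\mathrm{span}(\mathbf{f}_1,\dots,\mathbf{f}_d))\simeq\R^d$ of $F_d$: using the rationality of $\Lambda$, the intersection $\Lambda\cap N$ is cocompact in $N$, so its projection $\Sigma$ to $M$ is a smooth cross-section diffeomorphic to $\T^d$ and transverse to $\mathbf{w}$. Using the matrix representation of $\mathfrak{f}_d$ from~\S\ref{s:2.1} and the Baker--Campbell--Hausdorff formula, a direct computation shows that the return time to $\Sigma$ is the constant $c=1/w_0$ and that the Poincar\'e map $T\colon\T^d\to\T^d$ has exactly the form~\eqref{eq:dot}, with every superdiagonal coefficient $a_{i,i+1}$ a non-zero integer arising from the bracket $[\mathbf{f}_0,\mathbf{f}_i]=\mathbf{f}_{i+1}$. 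Unique ergodicity of $\{\varphi_t^{\mathbf{w}}\}$ transfers to unique ergodicity of $T$, and $T$ satisfies the non-degeneracy hypothesis of Theorem~\ref{th:intro1}(b). Consequently $\{\varphi_t^{\mathbf{w}}\}$ is smoothly isomorphic to the suspension of $T$ with constant roof $c$.

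Second, for any positive continuous $\alpha\colon M\to\R_{>0}$ the time-changed flow $\{\varphi_t^\alpha\}$ shares its orbits with $\{\varphi_t^{\mathbf{w}}\}$ and hence admits the same cross-section $\Sigma$ and Poincar\'e map $T$, with new return time
\begin{equation*}
\Psi_\alpha(\mathbf{x})=\int_0^c\frac{\diff u}{\alpha(\varphi_u^{\mathbf{w}}(\mathbf{x}))},\qquad \mathbf{x}\in\Sigma\simeq\T^d,
\end{equation*}
so $\{\varphi_t^\alpha\}$ is smoothly isomorphic to the suspension $\{T_t^{\Psi_\alpha}\}$ and mixing is preserved by this identification. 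I would then establish the cohomology dictionary: $\alpha-\int_M\alpha\,\diff\mu$ is a smooth $\mathbf{w}$-coboundary on $M$ if and only if $\Psi_\alpha-\int_{\T^d}\Psi_\alpha\,\diff\misura_d$ is a smooth $T$-coboundary on $\T^d$. The cleanest route avoids expanding $\Psi_\alpha$ in $\beta$ (which is nonlinear) and instead argues structurally: both conditions are equivalent to the corresponding flow being smoothly conjugate to a constant reparametrization of its unperturbed counterpart, and the isomorphism in the previous display transports one conjugacy into the other.

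Finally, Theorem~\ref{th:intro1}(a) provides a dense subspace $\mathscr{R}\subset\mathscr{C}(\T^d)$ of smooth roof functions. For every positive $\Psi\in\mathscr{R}$ I would realize $\Psi$ as $\Psi_\alpha$ for a smooth positive $\alpha$ on $M$ by prescribing $1/\alpha$ along each orbit segment of length $c$ as a smooth partition-of-unity interpolation of the values $\Psi(T^k\mathbf{x})/c$ at consecutive returns, chosen to extend smoothly across returns and to integrate to $\Psi(\mathbf{x})$ over each segment. Because this reconstruction is continuous in the uniform topology, approximating an arbitrary continuous time-change $\widetilde{\alpha}$ first by a smooth $\alpha_0$ of interpolation form, then replacing $\Psi_{\alpha_0}$ by a close $\Psi\in\mathscr{R}$, yields the desired $\alpha$ uniformly close to $\widetilde{\alpha}$; this gives density of $\mathscr{R}_{\mathrm{tc}}:=\{\alpha : \Psi_\alpha\in\mathscr{R}\}$ in the continuous time-changes. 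Combining with Steps~2 and~3 each $\alpha\in\mathscr{R}_{\mathrm{tc}}$ is mixing iff $\alpha$ is not cohomologous to a constant, proving the first assertion; the density of mixing time-changes follows by pulling back the explicit dense set $\mathscr{M}$ of Theorem~\ref{th:intro1}(b). I expect the main obstacles to be the Baker--Campbell--Hausdorff computation showing $T$ takes exactly the form~\eqref{eq:dot} with non-zero superdiagonal, and the structural verification of the cohomological dictionary, which bypasses the nonlinearity of $\alpha\mapsto\Psi_\alpha$.
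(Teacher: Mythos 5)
Your overall strategy matches the paper's: construct the cross-section $\Sigma\simeq\T^d$ via the abelian ideal spanned by $\mathbf{f}_1,\dots,\mathbf{f}_d$, show the first return time is constant and the Poincar\'e map is a skew-translation as in \eqref{eq:dot} with nonzero integer superdiagonal (the paper's Lemma~\ref{th:csfil} gives $a_{i,i+1}=E_{i+1}/E_i\geq 1$), and then pull back Theorem~\ref{th:intro1}; your appeal to Theorem~\ref{th:intro1}(b) for the ``moreover'' clause is also the intended route.

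Where you diverge, and where the gaps lie, is in the transfer from infinitesimal generators $\alpha$ to roof functions. You take $\Psi_\alpha(\mathbf{x})=\int_0^c\frac{\diff u}{\alpha(\varphi_u(\mathbf{x}))}$, the first-return time for the flow generated by $\alpha\mathbf{w}$, which is indeed what the paper's literal definition would give. The paper's proof, however, works with $\Psi^\alpha(\mathbf{x})=\int_0^{c}(\alpha\circ\varphi_u)(\mathbf{x})\diff u$ (the convention $\mathbf{w}/\alpha$), consistent with what it then asserts: the map $R\colon\alpha\mapsto\Psi^\alpha$ is \emph{linear}. This linearity makes both remaining steps short. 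First, $R(\mathbf{w}\beta)(\mathbf{x})=\int_0^{c}\frac{\diff}{\diff u}\big(\beta\circ\varphi_u\big)(\mathbf{x})\diff u=\beta(T\mathbf{x})-\beta(\mathbf{x})$, so $R$ carries $\mathbf{w}$-coboundaries to $T$-coboundaries, and the converse is similar; this is exactly the ``cohomology dictionary'' you leave as an unproven structural claim. Second, a linear, bounded, surjective $R$ (with a bounded right inverse along the suspension fibers) gives density of $R^{-1}(\mathscr{R})$ directly. Your reciprocal formula destroys this linearity, which is why you are driven to the partition-of-unity interpolation and to the vaguer ``smooth conjugacy to constant reparametrization'' equivalence. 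As sketched, neither step is complete: the interpolation construction needs to show that if $\Psi$ is uniformly close to $\Psi_{\widetilde\alpha}$ then the interpolated $\alpha$ is uniformly close to $\widetilde\alpha$ \emph{and} smooth across the cross-section $\Sigma$, and you give no argument for either; and the claimed equivalence of the two cohomological conditions needs a proof in both directions rather than an appeal to invariance.

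The remedy is simply to switch to $\Psi^\alpha=\int_0^c\alpha\circ\varphi_u\,\diff u$, after which the transfer step collapses to a few lines, as in the paper. (There is indeed a mismatch in the paper between the stated convention ``induced by $\alpha\mathbf{w}$'' and this formula; your formula is what that phrase literally produces, but the linear formula is what the proof uses and needs.)
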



\subsection{Outline of the paper.} 
Section \ref{section2} is devoted to explain the general strategy of the proof of Theorem \ref{th:intro1}-(a). 
First, we present a general mechanism that will allow us to reduce to consider a factor of the suspension flow for which the divergence of nearby points is of strictly higher order in the $x_d$-direction (we remark that $T$ acts as a translation in this latter coordinate). This is obtained by applying inductively Proposition \ref{th:induct}, whose proof is contained in \S\ref{proofofwrap}. Then, we prove mixing for the new suspension flow by showing that there is stretch of Birkhoff sums of the roof function $\Psi$ (see Theorem \ref{th:stretchingroof} in \S\ref{ST1}) and then using this stretch to show that segments in the $x_d$-direction get \newword{sheared} along the flow direction, as we explain in \S\ref{SU}. This is a well-known mechanism, often exploited in the context of parabolic flows to prove mixing. For example, it has been used by Marcus for time-changes of horocycle flows \cite{marcus:horocycle}, by Sinai and Khanin \cite{sinai:mixing}, Fayad \cite{fayad:mixing}, Ulcigrai \cite{ulcigrai:mixing} and the author \cite{ravotti:lhf} for suspension flows over rotations and interval exchange transformations. In our case, shearing comes from the fact that the roof function is not cohomologous to a constant and a decoupling argument, which generalizes the one used by Avila, Forni and Ulcigrai in \cite{avila:heisenberg} (although, in our higher dimensional setting, an additional geometric localization argument is needed).
In \S\ref{S:dom}, we use Proposition \ref{th:induct} to construct a dense set of mixing examples on any dimension, starting from the 2-dimensional ones, hence proving Theorem \ref{th:intro1}-(b). These roof functions are explicitly characterized in terms of their Fourier coefficients (see Lemma \ref{th:explicitcob}, which generalizes a result by Katok \cite[Theorem 11.25]{katok:combinatorial}).
Finally, in Section \ref{s:proof2}, we prove Theorem \ref{th:intro2} by constructing a cross-section for the quasi-abelian filiform nilflow such that, in appropriate coordinates, the Poincaré map is a skew-translation on $\T^d$, hence reducing the problem of time-changes of quasi-abelian filiform nilflows to the setting of Theorem \ref{th:intro1}.


\subsection*{Acknowledgments.} 
I would like to thank Giovanni Forni and my supervisor Corinna Ulcigrai for several useful discussions. I also thank the referee for his/her attentive reading and helpful comments on a previous version of the paper. The research leading to these results has received funding from the European Research Council under the European Union Seventh Framework Programme (FP/2007-2013)~/~ERC Grant Agreement n.~335989.


\section{Proof of Theorem \ref{th:intro1}-part a}\label{section2}

In this section, we present the general structure of the proof of Theorem \ref{th:intro1}-(a), stating some intermediate results, which are proved in later sections.

Let $T$ be a uniquely ergodic skew-translation as in \eqref{eq:dot}. 
If we denote by $E_j$ the image of the linear map $(A - \Id)^j$, we have a filtration of $\R^d$ into rational subspaces
$$
\R^d = E_0 > E_1 > \cdots > E_k > E_{k+1} = \{0\}.
$$
Up to a linear isomorphism, we can assume that the basis $\{ \mathbf{e}_1, \dots, \mathbf{e}_d \}$ of $\Z^d$ is adapted to the filtration above, in particular $\{\mathbf{e}_{d_0 +1}, \dots, \mathbf{e}_d \}$ is a basis of $E_k$, where $d-d_0 = \dim E_k $. 
Since $T$ is not a rotation, $k \geq 1$ and $1 \leq d_0 \leq d-1$. We remark that $\mathbf{w} \in E_j$ for $j \geq 1$ if and only if there exists $\mathbf{v} \in E_{j-1}$ such that $\mathbf{v}(A-\Id) = \mathbf{w}$, i.e.~$\mathbf{v}A = \mathbf{v}+\mathbf{w}$. In particular, for the basis elements $\mathbf{e}_{d_0+i} \in E_k \cap \Z^d$, for $i = 1, \dots, d-d_0$, there exists $\mathbf{v} \in E_{k-1} \cap \Z^d$ such that $\mathbf{v} A = \mathbf{v} + a \mathbf{e}_{d_0+i}$, for some $a \neq 0$.

We want to reduce to the case $d_0 = d-1$, that is $\dim E_k = 1$. In \S\ref{sectionwrap} we describe a general mechanism that allows us to deduce mixing from the assumption that a system with one less dimension is mixing. This motivates also the definition of the set $\mathscr{R}$, which is explained in \S\ref{R}. In \S\ref{dns} we prove $\mathscr{R}$ is dense in $\mathscr{C}(\T^d)$. Finally, in \S\ref{S:case2} and \S\ref{S:case1} we apply inductively the result of \S\ref{sectionwrap} to reduce to the case $d_0 = d-1$ and then we conclude the proof of Theorem \ref{th:intro1}-(a).


\subsection{The wrapping mechanism}\label{sectionwrap}
Let $\pi \colon \T^d \to \T^{d-1}$ be the projection given by suppressing the $d$-th coordinate. Then $\pi$ gives a factor of $(\T^d, T)$; more precisely, let $\widehat{A}=(a_{i,j})_{1 \leq i,j \leq d-1}$ be the $(d-1)\times(d-1)$ matrix obtained by removing the last row and the last column from $A$ and let $\widehat{\mathbf{b}} = \pi(\mathbf{b}) \in  \T^{d-1}$. Then, the skew-translation $\widehat{T} \colon \T^{d-1} \to \T^{d-1}$ defined by $\widehat{T} \mathbf{y} =\mathbf{y}\widehat{A} + \widehat{\mathbf{b}}$ makes the diagram
\begin{displaymath}
    \xymatrix{\T^d \ar[d]_{\pi} \ar[r]^{T} & \T^d\ar[d]^{\pi} \\
              \T^{d-1} \ar[r]^{\widehat{T}} & \T^{d-1}}
\end{displaymath}
commute.

Let us denote also by $\pi \colon \T^d \times \R \to \T^{d-1} \times \R$ the projection $\pi(\mathbf{x},r) = (\pi(\mathbf{x}),r)$. 
Let $\psi \colon \T^{d-1} \to \R_{>0}$ be a smooth function over $\T^{d-1}$ and consider the roof function $\psi \circ \pi$ over $(\T^d,T)$ which is constant in the $d$-th coordinate.
Then, $\pi$ is a factor map of the suspension flow $\{T_t^{\psi \circ \pi}\}_{t \in \R}$, namely
\begin{equation}\label{eq:factor}
(\pi \circ T_t^{\psi \circ \pi})(\mathbf{x}, r) = (\widehat{T}_t^{\psi} \circ \pi)(\mathbf{x}, r).
\end{equation}

As we discussed at the beginning of the section, there exists $\mathbf{v} \in \Z^d$ such that $\mathbf{v}A = \mathbf{v} + a\mathbf{e}_d$ for some $a \neq 0$.
This means that the images of segments parallel to $\mathbf{v}$ under $T$ get sheared in direction $\mathbf{e}_d$ and wrap around the circles parallel to $\mathbf{e}_d$.
Exploiting this shearing effect along the fibers of the projection $\pi$, it is possible to \lq\lq{}lift\rq\rq{}\ mixing from the quotient to the original suspension flow, namely the following result.
\begin{prop}\label{th:induct}
Let $\pi$ be the projection onto the first $d-1$ coordinates and let $\widehat{T} \colon \T^{d-1} \to \T^{d-1}$ be the corresponding factor. Let $\psi \colon \T^{d-1} \to \R_{>0}$ be a positive smooth function. If there exists $\mathbf{v} \in \Z^d$ such that $\mathbf{v}A = \mathbf{v} + a\mathbf{e}_d$ for some $a \neq 0$, then the suspension flow $\{ T_t^{\psi \circ \pi}\}_{t \in \R}$ over $(\T^d,T)$ is mixing if and only if the suspension flow $\{ \widehat{T}_t^{\psi} \}_{t \in \R}$ over $(\T^{d-1},\widehat{T})$ is mixing.
\end{prop}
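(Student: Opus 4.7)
The forward implication is immediate: by~\eqref{eq:factor}, the projection $(\mathbf{x},r)\mapsto(\pi\mathbf{x},r)$ realises $\{\widehat{T}_t^\psi\}$ as a measurable factor of $\{T_t^{\psi\circ\pi}\}$, and mixing descends to factors. I concentrate on the converse.

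Write points of the total space in coordinates $(\mathbf{y},z,r)$ with $\mathbf{y}=\pi\mathbf{x}$ and $z=x_d$. Since $\psi\circ\pi$ is independent of $z$, the return count $n_t=n_t(\mathbf{y},r)$ does not depend on $z$, and~\eqref{eq:sflow} rewrites as
\begin{equation*}
T_t^{\psi\circ\pi}(\mathbf{y},z,r)=\bigl(\widehat{T}_t^\psi(\mathbf{y},r),\;z+S_{n_t}(\sigma)(\mathbf{y})\bigr),
\end{equation*}
with $\sigma(\mathbf{y}):=\sum_{i<d}a_{i,d}y_i+b_d$ the affine cocycle encoded in the last column of $A$. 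Thus $\{T_t^{\psi\circ\pi}\}$ is a $\T$-extension of $\{\widehat{T}_t^\psi\}$. By density of trigonometric polynomials in $z$ it suffices to check mixing on pairs $f=F(\mathbf{y},r)e^{2\pi i k z}$, $g=G(\mathbf{y},r)e^{2\pi i m z}$ with $F,G$ smooth on the base suspension. Orthogonality of the characters $\{e^{2\pi ikz}\}_{k\in\Z}$ kills the contribution of $k\neq m$ after integration in $z$, and the case $k=m=0$ reduces directly to mixing of $\widehat{T}_t^\psi$. The problem collapses to showing that, for every $k\in\Z\setminus\{0\}$,
\begin{equation*}
I_k(t)\;:=\;\int F\bigl(\widehat{T}_t^\psi(\mathbf{y},r)\bigr)\,\overline{G(\mathbf{y},r)}\,e^{2\pi ikS_{n_t(\mathbf{y},r)}(\sigma)(\mathbf{y})}\diff\nu(\mathbf{y},r)\longrightarrow 0
\end{equation*}
as $t\to\infty$, where $\nu$ is the invariant measure of the base suspension.

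The hypothesis on $\mathbf{v}$ is precisely what produces the oscillation needed for this decay. Splitting $\mathbf{v}=(\widehat{\mathbf{v}},v_d)$ with $\widehat{\mathbf{v}}=\pi(\mathbf{v})\in\Z^{d-1}$, the identity $\mathbf{v}A=\mathbf{v}+a\mathbf{e}_d$ is equivalent to $\widehat{\mathbf{v}}\widehat{A}=\widehat{\mathbf{v}}$ together with $\sum_{i<d}\widehat{v}_i a_{i,d}=a$. In particular, translation by $\varepsilon\widehat{\mathbf{v}}$ commutes with every iterate of $\widehat{T}$, and since $\sigma$ is affine with linear part $(a_{1,d},\dots,a_{d-1,d})$ one obtains the \emph{shearing identity}
\begin{equation*}
S_n(\sigma)(\mathbf{y}+\varepsilon\widehat{\mathbf{v}})=S_n(\sigma)(\mathbf{y})+na\varepsilon\qquad(\varepsilon\in\R,\;n\in\N).
\end{equation*}
Along the closed loop $\varepsilon\in[0,1]\mapsto\mathbf{y}+\varepsilon\widehat{\mathbf{v}}\in\T^{d-1}$ the phase $2\pi kS_n(\sigma)$ is therefore exactly affine in $\varepsilon$ with slope $2\pi kna$; this slope tends to infinity with $t$, since $\widehat{T}$ is uniquely ergodic (as a factor of $T$) and $\psi$ is bounded away from zero, so $n_t\to\infty$ uniformly.

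To extract the decay I would disintegrate $\nu$ along the foliation of the base by these $\widehat{\mathbf{v}}$-loops and write $I_k(t)$ as an integral over a transversal of one-dimensional oscillatory integrals in $\varepsilon$. On each loop the amplitude $F(\widehat{T}_t^\psi(\mathbf{y}+\varepsilon\widehat{\mathbf{v}},r))\overline{G(\mathbf{y}+\varepsilon\widehat{\mathbf{v}},r)}$ is smooth in $\varepsilon$ (the commutation of $\widehat{T}$ with the $\widehat{\mathbf{v}}$-translation is used here), and between successive jumps of $\varepsilon\mapsto n_t(\mathbf{y}+\varepsilon\widehat{\mathbf{v}},r)$ the phase is exactly linear of slope $2\pi kna$, so a Riemann--Lebesgue / integration-by-parts estimate on each smooth piece bounds the inner integral by $O(1/|k|na)$, which tends to $0$. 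The main technical obstacle is controlling the finite set of jumps of $n_t$ along each loop: these occur when $S_n(\psi)(\mathbf{y}+\varepsilon\widehat{\mathbf{v}})$ crosses the threshold $r+t$, and the induced discontinuities of both the phase and the amplitude have to be shown to be dominated by the $na$-scale of the linear shearing. Uniform (Denjoy--Koksma-type) estimates on the Birkhoff sums $S_n(\psi)$, available from unique ergodicity of $\widehat{T}$, make this possible and constitute the heart of the argument; integrating over the transversal then yields $I_k(t)\to 0$ and completes the proof.
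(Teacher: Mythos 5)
Your forward implication is fine and matches the paper's (trivial) observation~\eqref{eq:factor}. For the converse, your approach is genuinely different from the one in the paper. You view $\{T_t^{\psi\circ\pi}\}$ as a $\T$-extension of the base suspension $\{\widehat{T}_t^\psi\}$ via the affine cocycle $\sigma$, Fourier-decompose in the fibre coordinate $z$, and reduce mixing to the decay of the twisted correlations $I_k(t)$ for $k\neq 0$, which you then attack as one-dimensional oscillatory integrals along $\widehat{\mathbf{v}}$-loops using the exact shearing identity $S_n(\sigma)(\mathbf{y}+\varepsilon\widehat{\mathbf{v}})=S_n(\sigma)(\mathbf{y})+na\varepsilon$. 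The paper instead argues geometrically with cubes and Fubini: it subdivides each $\widehat{\mathbf{v}}$-loop into pieces of length $\overline{s}=(n_t a)^{-1}$, shows that (up to an $\varepsilon_0$-collar) $n_t$ is constant on each such piece and its image under the flow wraps exactly once around the $x_d$-circle, and then compares $\misura_d(T_t^{\Psi}(R)\cap Q)$ to the corresponding quotient quantity via Fubini. Both arguments exploit exactly the same two ingredients — the identity $\mathbf{v}A=\mathbf{v}+a\mathbf{e}_d$, giving the deterministic $na\varepsilon$ shear, and the uniform ergodic estimate $S_n(\partial_{\widehat{\mathbf{v}}}\psi)/n\to 0$ (the paper's~\eqref{eq:bet}), which in your picture controls the number and location of the jumps of $\varepsilon\mapsto n_t(\mathbf{y}+\varepsilon\widehat{\mathbf{v}},r)$. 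Your Fourier version makes the $\T$-extension structure and the cancellation mechanism conceptually cleaner; the paper's geometric version sidesteps the oscillatory-integral bookkeeping and is self-contained at the expense of some $\varepsilon_0$-collar epsilon management.

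One point you should make explicit when writing this up: you state each smooth piece contributes $O(1/|k|na)$ and leave the summation over pieces as ``the heart of the argument''. The piece count can grow with $t$, so the per-piece Riemann--Lebesgue bound alone is not enough; you need that the number of jumps along a loop is $\Delta n_t(\text{loop})+O(1)$, and that the uniform Birkhoff estimate $\sup_{\T^{d-1}}\modulo{S_n(\partial_{\widehat{\mathbf{v}}}\psi)}/n\to 0$ forces $\Delta n_t=o(n_t)$, so that the sum of the $O(1/|k|n_t a)$ boundary terms over at most $o(n_t)$ pieces still tends to $0$, while the $h'/\omega$ term is handled by the same $o(n_t)$ bound on $\partial_\varepsilon S_{n_t}(\psi)$. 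You also need a remark that the pointwise phase (not just its slope) jumps by $\sigma(\widehat{T}^n\mathbf{y})+a\varepsilon_0$ at each threshold crossing — an $O(1)$ jump that does not disturb the integration-by-parts boundary accounting — and that the integration region in $\varepsilon$ is further cut by the constraint $r<\psi(\mathbf{y}+\varepsilon\widehat{\mathbf{v}})$, contributing at most two extra boundary points per loop. With those details supplied, your route is complete and correct.
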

The proof of Proposition \ref{th:induct} is presented in \S\ref{proofofwrap}.


\subsection{Definition of $\mathscr{R}$}\label{R}

Generalizing the notation of \S\ref{sectionwrap}, for each $i=1, \dots, d-1$, denote by $\pi_i \colon \T^d \to \T^i$ the projection onto the first $i$ coordinates and by $T_i \colon \T^i \to \T^i$ the corresponding factor map $\pi_i \circ T = T_i \circ \pi_i$. Let $\mathscr{P}(d)$ be the space of trigonometric polynomials over $\T^d$. For any $\Psi \in \mathscr{P}(d)$, we can write
$$
\Psi = \psi_{d-1} \circ \pi_{d-1} + \Psi_d^{\perp},
$$
where
$$
\psi_{d-1} ( \pi_{d-1} (\mathbf{x})) = \int_0^1 \Psi(\mathbf{x}) \diff x_d \text{\ \ \ and\ \ \ } \Psi_d^{\perp}(\mathbf{x}) = \Psi(\mathbf{x}) - \psi_{d-1}(\pi_{d-1} (\mathbf{x}) ).
$$
The function $\psi_{d-1} \circ \pi_{d-1}$ does not depend on the $x_d$-coordinate, thus we can see $\psi_{d-1}$ as a trigonometric polynomial over $\T^{d-1}$. Inductively, we write
\begin{equation}\label{eq:dec}
\Psi = \psi_{d_0} \circ \pi_{d_0} + \Psi_{d_0+1}^{\perp} \circ \pi_{d_0+1} + \cdots + \Psi_d^{\perp},
\end{equation}
where
$$
\psi_i \circ \pi_i = \int_0^1 \psi_{i+1} \circ \pi_{i+1} \diff x_{i+1} \text{\ \ \ and\ \ \ } \Psi_i^{\perp} \circ \pi_i = \psi_{i+1} \circ \pi_{i+1} - \psi_i \circ \pi_i.
$$
The integral of $\Psi_i^{\perp}$ in $\diff x_i$ is equal to zero, hence we have the decomposition
\begin{equation}\label{eq:directsum}
\mathscr{P}(d) = \mathscr{P}(d_0) \oplus \bigoplus_{i=d_0+1}^{d} \mathscr{Q}(i), \text{\ \ \ where\ \ \ } \mathscr{Q}(i) = \left\{ \Psi \in \mathscr{P}(i) : \int_0^1 \Psi \diff x_{i} \equiv 0 \right\}.
\end{equation}
Explicitly, let $e(x) = \exp(2 \pi i x)$ and consider a trigonometric polynomial of degree $m$,
$$
\Psi (\mathbf{x}) = \sum_{\mathbf{l} \in [-m,m]^{d} \cap \Z^{d}} c_{\mathbf{l}} e(\mathbf{l} \cdot \mathbf{x}) \in \mathscr{P}(d).
$$
Then, denoting $\mathbf{x}_i = \pi_i (\mathbf{x})$, we have
\begin{equation*}
\begin{split}
& \psi_{d_0}(\mathbf{x}_{d_0}) = \sum_{\mathbf{l}_{d_0} \in [-m,m]^{d_0} \cap \Z^{d_0}} c_{(l_1, \dots,l_{d_0}, 0, \dots, 0)} e(\mathbf{l}_{d_0} \cdot \mathbf{x}_{d_0}) \text{\ \ \ and}\\ 
&\Psi^{\perp}_{i} (\mathbf{x}_i) = \sum_{\mathbf{l}_i \in [-m,m]^{i} \cap \Z^{i},\ l_i \neq 0} c_{(l_1, \dots, l_i, 0, \dots, 0)} e(\mathbf{l}_i \cdot \mathbf{x}_i),
\end{split}
\end{equation*}
where the last sum is taken over all integer vectors $\mathbf{l}_i = \pi_i(\mathbf{l}) \in [-m,m]^{i} \cap \Z^{i}$ such that the last component $l_i \neq 0$.
\begin{defin}\label{def:R}
For each $\Psi \in \mathscr{P}(d)$ consider the decomposition \eqref{eq:dec}. We define the set $\mathscr{R}=\mathscr{R}(T) \subset \mathscr{P}(d)$ associated to the skew-translation $T$ by
\begin{equation*}
\begin{split}
\Psi \in \mathscr{R} \text{\ \ iff\ \ } & \Psi_{i}^{\perp} \text{ is a measurable coboundary for $T_{i}$ for all $i= d_0+2, \dots, d$}\\
& \text{and } \psi_{d_0} \text{ is smoothly cohomologous to a constant w.r.t.~$T_{d_0}$}.
\end{split}
\end{equation*}
\end{defin}


\subsection{Density}\label{dns}

We now prove that $\mathscr{R}$ is dense in $\mathscr{C}(\T^d)$ w.r.t.~$\norma{\cdot}_{\infty}$. By \eqref{eq:directsum}, we have to show that the set of trigonometric polynomials which are smoothly cohomologous to a constant w.r.t.~$T_{d_0}$ is dense in $\mathscr{P}(d_0)$ and that the set of measurable coboundaries for $T_{i}$ in $\mathscr{Q}(i)$ is dense in $\mathscr{Q}(i)$ for all $i = d_0+2, \dots, d$.
All factors $T_{d_0}, \dots, T_{d-1}$ are uniquely ergodic skew-translations of the same form as $T$, hence it suffices to prove the following lemma; the proof follows the same ideas as a result by Katok \cite[Proposition 10.13]{katok:combinatorial}.
\begin{lemma}\label{th:densecob}
We have the following.
\begin{itemize}
\item[(i)] The set of trigonometric polynomials which are smoothly cohomologous to a constant w.r.t.~$T$ is dense in $\mathscr{P}(d)$.
\item[(ii)] The set of smooth coboundaries for $T$ in $\mathscr{Q}(d)$ is dense in $\mathscr{Q}(d)$.
\end{itemize}
\end{lemma}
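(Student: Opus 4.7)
The plan is to use Cesàro averaging along the $T$-orbit, producing explicit trigonometric-polynomial coboundaries and using unique ergodicity of $T$ to control the error in $\norma{\cdot}_\infty$. This is the Gottschalk--Hedlund/Katok-style approach alluded to before the statement.

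The key identity to record is the following. For any continuous mean-zero $\phi$ on $\T^d$, set
\[
u_N \; := \; -\frac{1}{N}\sum_{n=0}^{N-1}(N-n)\,\phi\circ T^n.
\]
A one-line telescoping computation yields
\[
u_N\circ T \;-\; u_N \;=\; \phi \;-\; \frac{1}{N}\sum_{n=1}^{N}\phi\circ T^n.
\]
Since $T$ is uniquely ergodic and $\phi$ is continuous with $\int_{\T^d}\phi\diff\misura_d=0$, the Birkhoff averages on the right converge to $0$ uniformly on $\T^d$. Hence $u_N\circ T-u_N\to\phi$ in $\norma{\cdot}_\infty$, and each approximant is a smooth coboundary for $T$ with trigonometric-polynomial (hence $C^\infty$) transfer function $u_N$. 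This immediately gives (i): given $\Psi\in\mathscr{P}(d)$, set $c=\int\Psi\diff\misura_d$ and apply the construction to $\phi=\Psi-c\in\mathscr{P}(d)$; then $\Psi=\lim_N[c+(u_N\circ T-u_N)]$ uniformly, and each approximant is smoothly cohomologous to the constant $c$.

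For (ii), let $\Psi\in\mathscr{Q}(d)$. The zero Fourier mode of $\Psi$ vanishes (since the index $\mathbf{0}$ satisfies $0_d=0$), so $\int\Psi\diff\misura_d=0$ and the construction produces $u_N\circ T-u_N\to\Psi$ uniformly. The main point left to verify is that the approximants actually lie in $\mathscr{Q}(d)$; it suffices to show that $\mathscr{Q}(d)$ is invariant under the Koopman operator $\Psi\mapsto\Psi\circ T$, as this implies $\Psi\circ T^n\in\mathscr{Q}(d)$ for every $n$, hence $u_N\in\mathscr{Q}(d)$, hence $u_N\circ T-u_N\in\mathscr{Q}(d)$. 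A direct Fourier-expansion check shows that a mode $c_{\mathbf{l}}\,e(\mathbf{l}\cdot\mathbf{x})$ of $\Psi$ is sent by $T^*$ into a mode at $\mathbf{m}=\mathbf{l}A^T$ (with a harmless phase factor). Since $A$ is upper-triangular unipotent as prescribed by \eqref{eq:dot}, $A^T$ is lower-triangular unipotent, and its right-action on row vectors preserves the last coordinate: $m_d=l_d$. Thus the vanishing condition $c_{\mathbf{l}}=0$ for $l_d=0$ is stable under $T^*$, as required.

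The only non-routine step in the plan is the invariance of $\mathscr{Q}(d)$ under the Koopman operator, which crucially uses the upper-triangular structure of $A$ in \eqref{eq:dot}; the remaining ingredients (the telescoping computation and the uniform convergence of Birkhoff averages for continuous functions under unique ergodicity) are entirely standard.
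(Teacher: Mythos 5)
Your proof is correct, and it takes a genuinely different route from the paper's. The paper argues by contradiction with a Hahn--Banach functional: assuming the image of $P\Psi = \Psi\circ T - \Psi$ is not dense in $\mathscr{Q}(d)$, it extracts a nonzero bounded functional $\nu$ annihilating $\overline{\image P}$, extends it to a $T$-invariant measure $\widetilde{\nu}$ on $\T^d$, and invokes unique ergodicity to force $\widetilde{\nu}=\misura_d$, contradicting $\nu(\Phi)=1$ for some mean-zero $\Phi$. Your argument is instead constructive: the Cesàro-weighted transfer functions $u_N = -\frac{1}{N}\sum_{n=0}^{N-1}(N-n)\phi\circ T^n$ are explicit trigonometric polynomials, the telescoping identity $u_N\circ T - u_N = \phi - \frac{1}{N}\sum_{n=1}^N \phi\circ T^n$ is correct, and unique ergodicity again does the work via uniform convergence of Birkhoff averages. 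Both proofs hinge on unique ergodicity of $T$ and both implicitly or explicitly use the $T^*$-invariance of $\mathscr{Q}(d)$, which you correctly isolate and verify from the upper-triangular unipotent structure of $A$ (the action on Fourier indices sends $\mathbf{l}\mapsto\mathbf{l}A^T$ and fixes $l_d$); the paper needs this same fact to know that $P$ maps $\mathscr{Q}(d)$ to itself but does not spell it out. The trade-off: the paper's argument is shorter and more abstract, while yours produces an explicit approximating sequence of coboundaries, which is slightly stronger and arguably more transparent.
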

\begin{proof}
We show (ii); the proof of (i) is analogous. Define $P \colon \mathscr{Q}(d) \to \mathscr{Q}(d)$ by $P \Psi_d^{\perp} = \Psi_d^{\perp} \circ T - \Psi_d^{\perp}$; it is sufficient to show that $\mathscr{Q}(d) \subseteq \overline{\image P}$, where the closure is w.r.t.~$\norma{\cdot}_{\infty}$ in $\mathscr{Q}(d)$.

Suppose, by contradiction, that there exists $\Phi \in \mathscr{Q}(d)$ and $\Phi \notin \overline{\image P}$. By Hahn-Banach Theorem, there exists $\nu \colon \mathscr{Q}(d) \to \R$ linear and continuous such that $\nu(\Phi) =1$ and $\nu |_{\overline{\image P}} = 0$. 
We extend $\nu$ to a functional $\widetilde{\nu}$ on all $\mathscr{P}(d) = \mathscr{P}(d-1) \oplus \mathscr{Q}(d)$ by defining 
$$
\widetilde{\nu}(\psi_{d-1} \circ \pi_{d-1}+ \Psi_d^{\perp}) = \int_{\T^{d-1}} \psi_{d-1} \diff \misura_{d-1}+ \nu(\Psi_d^{\perp}). 
$$
It is easy to check that $\widetilde{\nu}$ is again linear and continuous, hence it uniquely defines a measure on $\T^{d}$.
For every $\Psi_d^{\perp} \in \mathscr{Q}(d)$ we have
$$
0 = \nu(P\Psi_d^{\perp}) = \nu( \Psi_d^{\perp} \circ T ) - \nu( \Psi_d^{\perp}),
$$
i.e.,~$\nu$ is $T$-invariant over $\mathscr{Q}(d)$. Therefore, for any $\Psi =\psi_{d-1} \circ \pi_{d-1} + \Psi_d^{\perp}\in \mathscr{P}(d)$, 
$$
\widetilde{\nu}(\Psi \circ T) = \int_{\T^{d-1}} \psi_{d-1} \circ T_{d-1}\ \diff \misura_{d-1} + \nu(\Psi_d^{\perp} \circ T ) = \int_{\T^{d-1}} \psi_{d-1} \ \diff \misura_{d-1} + \nu(\Psi_d^{\perp} )=\widetilde{\nu}(\Psi). 
$$
By unique ergodicity of $T$, we deduce that $\widetilde{\nu}=\misura_d$. We conclude 
$$
\widetilde{\nu}(\Phi) = \int_{\T^d} \Phi \diff \misura_d = 0, 
$$
in contradiction with $\nu(\Phi)=1$.
\end{proof}


\subsection{Proof of Theorem \ref{th:intro1}-(a): step 1}\label{S:case2}

Using Proposition \ref{th:induct}, we explain how to reduce the problem to the case of $\dim E_k =1$, where, we recall, $E_k$ is the image of $(A-\Id)^k$ and $(A-\Id)^{k+1}=0$. 
Let $\Psi \in \mathscr{R}$, and assume that it is not cohomologous to a constant w.r.t.~$T$. If $d_0 \leq d-2$, then, by definition of $\mathscr{R}$, the function $\Psi_d^{\perp}$ is a measurable coboundary for $T$, i.e.~$\Psi_d^{\perp} = u \circ T - u$ for some measurable function $u \colon \T^d \to \R$. 
We claim that $\psi_{d-1}$ is not cohomologous to a constant w.r.t.~the factor map $T_{d-1}$. By contradiction, suppose that $\psi_{d-1} - \int \psi_{d-1} = v \circ T_{d-1} - v$ for some $v \colon \T^{d-1} \to \R$. Then,
\begin{multline*}
\Psi - \int_{\T^d} \Psi \diff \misura_d = \psi_{d-1} \circ \pi_{d-1} + \Psi_{d}^{\perp} - \int_{\T^{d-1}} \psi_{d-1} \diff \misura_{d-1} \\
=  v \circ T_{d-1} \circ \pi_{d-1} - v \circ \pi_{d-1} + u \circ T - u = (v \circ \pi_{d-1} + u) \circ T - (v \circ \pi_{d-1} + u),
\end{multline*}
in contradiction with the assumption on $\Psi$.

\begin{remark}\label{rk:mixcob}
The differential of the map $\zeta \colon (\mathbf{x} ,r) \mapsto (\mathbf{x} ,r+u(\mathbf{x} ))$ defined on $\T^d \times \R$ has determinant 1, hence $\zeta$ preserves the $(d+1)$-dimensional Lebesgue measure. Moreover, one can check that $\zeta(\mathbf{x}, r + \Psi(\mathbf{x})) = \zeta(T\mathbf{x}, r)$; in particular, $\zeta$ descends to the quotient spaces $\zeta \colon \bigslant{\T^d \times \R}{\sim}_{\Psi} \to \bigslant{\T^d \times \R}{\sim}_{\psi_{d-1} \circ \pi_{d-1}}$ and hence maps the invariant measure for the suspension flow over $(\T^d, T)$ with roof function $\Psi = \psi_{d-1} \circ \pi_{d-1} + \Psi_d^{\perp}$ to the invariant measure of the one with roof function $\psi_{d-1} \circ \pi_{d-1}$. In particular, the former is mixing if and only if the latter is mixing.
\end{remark}

By Remark \ref{rk:mixcob} and by Proposition \ref{th:induct}, mixing of $\{ T_t^{\Psi} \}_{t \in \R}$ is equivalent to mixing of $\{ (T_{d-1})_t^{\psi_{d-1}} \}_{t \in \R}$, where $\psi_{d-1} \in \mathscr{R}(T_{d-1})$. Iterating this process for all $\Psi_i^{\perp}$ for $i = d_0+2, \dots, d$, we reduce to prove mixing for the suspension flow $\{ (T_{d_0+1})_t^{\psi_{d_0+1}} \}_{t \in \R}$ over $(\T^{d_0+1}, T_{d_0 +1})$ with roof function $\psi_{d_0 +1} \in \mathscr{R}(T_{d_0 +1})$. By construction, the map $T_{d_0+1}$ is of the desired form.


\subsection{Proof of Theorem \ref{th:intro1}-(a): step 2}\label{S:case1}

We can now assume that the matrix $A$ in the definition \eqref{eq:dot} of $T$ satisfies $d_0 = d-1$, i.e.~$\dim E_k = 1$. Consider $\Psi \in \mathscr{R}(T)$, and assume that it is not cohomologous to a constant. 
Then, by definition of $\mathscr{R}$, we can write $\Psi = \psi_{d-1} \circ \pi_{d-1} + \Psi_d^{\perp}$, where $\psi_{d-1}$ is smoothly cohomologous to a constant w.r.t.~$T_{d-1}$. Thus, there exists a smooth function $u \colon \T^{d-1} \to \R$ such that $\psi_{d-1} - \int \psi_{d-1}= u \circ T_{d-1} - u$. 

We notice that $\Psi_d^{\perp}$ is not a measurable coboundary for $T$. Indeed, if this were not the case and $\Psi_d^{\perp} = v \circ T - v$ for some measurable function $v \colon \T^d \to \R$, we would have
\begin{multline*}
\Psi - \int_{\T^d} \Psi \diff \misura_d = \Psi - \int_{\T^d} \psi_{d-1} \circ \pi_{d-1} \diff \misura_d = \psi_{d-1} \circ \pi_{d-1} - \int_{\T^{d-1}} \psi_{d-1} \diff \misura_{d-1} + \Psi_d^{\perp} \\
=  u \circ T_{d-1} \circ \pi_{d-1} - u \circ \pi_{d-1} + v \circ T - v  = ( u \circ \pi_{d-1} + v) \circ T - (u \circ \pi_{d-1} +v),
\end{multline*}
which is a contradiction since we are assuming that $\Psi$ is not measurably cohomologous to a constant.
The first step is to prove that the Birkhoff sums of $\Psi_d^{\perp}$ grow in measure, namely the following result.

\begin{thm}\label{th:stretchingroof}
For any function $\Psi^{\perp} \in \mathscr{Q}(d)$, which is not a measurable coboundary for $T$, and any $C >1$ we have
$$
\lim_{n \to \infty} \misura_d \left(\modulo{S_n(\Psi^{\perp})}<C\right) = 0.
$$
\end{thm}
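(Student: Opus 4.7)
The plan is to derive the statement from a classical coboundary criterion due to K.~Schmidt: for any ergodic probability-preserving transformation $T$ on a standard probability space $(X,\mu)$ and any measurable $f \colon X \to \R$ with mean $\bar f$, the family $\{S_n f - n \bar f\}_{n \geq 1}$ is tight in probability if and only if $f$ is measurably cohomologous to a constant. The contrapositive gives exactly the dispersion we seek: if $f$ is \emph{not} cohomologous to any constant, then $\mu(|S_n f - n \bar f| \leq C) \to 0$ as $n \to \infty$ for every $C > 0$.

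I would then verify that the hypothesis of Theorem \ref{th:stretchingroof} implies non-cohomology to any constant. Since $\Psi^\perp \in \mathscr{Q}(d)$ satisfies $\int_0^1 \Psi^\perp \diff x_d \equiv 0$, Fubini gives $\int_{\T^d} \Psi^\perp \diff \misura_d = 0$, so $\Psi^\perp$ has mean zero. If $\Psi^\perp$ were cohomologous to a constant $c$, i.e.~$\Psi^\perp - c = u \circ T - u$ for some measurable $u \colon \T^d \to \R$, integrating both sides against the $T$-invariant probability measure $\misura_d$ would force $c = 0$; but then $\Psi^\perp = u \circ T - u$ would be a measurable coboundary, contradicting the hypothesis. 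Hence $\Psi^\perp$ is not cohomologous to any constant, and Schmidt's criterion applied to $f = \Psi^\perp$, together with the unique ergodicity of $T$ on $(\T^d, \misura_d)$, yields the desired conclusion.

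If a self-contained proof of the non-trivial direction of Schmidt's criterion is preferred, the main step is to consider the skew product $\widetilde T \colon \T^d \times \R \to \T^d \times \R$ defined by $\widetilde T(\mathbf{x},y) = (T\mathbf{x}, y + \Psi^\perp(\mathbf{x}))$, which preserves the infinite measure $\misura_d \times \mathrm{Leb}$. Supposing, for contradiction, that there exist $C > 0$, $\varepsilon > 0$ and $n_k \to \infty$ with $\misura_d(\{|S_{n_k} \Psi^\perp| \leq C\}) \geq \varepsilon$, one pushes forward the restriction of $\misura_d$ to this set under the graph map $\mathbf{x} \mapsto (\mathbf{x}, S_{n_k}\Psi^\perp(\mathbf{x}))$, obtaining a tight family of non-zero finite Borel measures supported in the compact strip $\T^d \times [-C,C]$; by Prokhorov one extracts a non-zero weak-$\ast$ limit $\nu$, which an approximate-invariance argument shows to be $\widetilde T$-invariant. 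Ergodicity of $T$ together with disintegration of $\nu$ over the first factor then forces $\nu$ to concentrate on the graph of a measurable section $u \colon \T^d \to \R$ satisfying $\Psi^\perp = u \circ T - u$ almost everywhere, the desired contradiction. This Prokhorov-plus-disintegration argument (or equivalently, an analysis via Schmidt's essential values) is the main technical obstacle; everything else in the argument is essentially formal, and no special structural feature of the skew-translation $T$ beyond ergodicity is needed.
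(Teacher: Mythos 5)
Your proposed ``Schmidt criterion'' is false as stated, and this is precisely the gap the paper's decoupling argument is designed to close. The correct statement of Schmidt's cocycle theorem is: for an ergodic $T$, a measurable mean-zero $f$ is a coboundary if and only if the family $\{S_n f\}_n$ is \emph{tight}. The negation of tightness is only that there exists $\varepsilon>0$ such that for every $C$ there is \emph{some} $n$ with $\mu(|S_n f|>C)\geq\varepsilon$; it does not say that $\mu(|S_n f|<C)\to 0$. Indeed your claimed contrapositive fails concretely: take $T$ an irrational rotation on $\T$ and $f(x)=\{x\}-\tfrac12$, which has mean zero and is well known not to be a measurable coboundary, yet the Denjoy--Koksma inequality gives $|S_{q_k}f|\leq\mathrm{Var}(f)=1$ everywhere along the denominators $q_k$ of the continued fraction, so $\mu(|S_{q_k}f|<2)=1$ for all $k$ and the limit in question is certainly not $0$. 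Thus ``not a coboundary'' gives at best a \emph{Cesàro} decay of $\misura_d(|S_n\Psi^\perp|<C)$ --- which is exactly Lemma~\ref{th:cesaro}, and that part of your argument is fine --- but there is nothing in general cocycle theory forcing the full limit.

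The heart of Theorem~\ref{th:stretchingroof} is promoting the Cesàro statement to a genuine limit, and this is where your claim that ``no special structural feature of the skew-translation $T$ beyond ergodicity is needed'' is the fatal error. The paper's Lemma~\ref{th:decoupling} uses the explicit Fourier expansion of $S_n(\Psi^\perp)$, the fact that $(A-\Id)^k$ has image $\langle\mathbf e_d\rangle$, and the resulting polynomial shearing of order $O(N^k)$ in the last coordinate (compared to $O(N^{k-1})$ in the others) to show that $S_N(\Psi^\perp)\circ T^n$ and $S_N(\Psi^\perp)$ decouple in measure. Combined with the arithmetic-progression refinement of Lemma~\ref{th:cesaro} and an inclusion--exclusion argument, this rules out a bad subsequence on which $\misura_d(|S_n\Psi^\perp|<C)$ stays bounded below. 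Your Prokhorov-plus-disintegration sketch, even if patched to produce an honest $\widetilde T$-invariant limit measure, would again only reproduce Schmidt's tightness dichotomy and hence only the Cesàro decay; it cannot see the specific parabolic geometry that kills the exceptional subsequences.
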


From Theorem \ref{th:stretchingroof}, using the fact that $\psi_{d-1}$ is smoothly cohomologous to a constant, we deduce mixing. This final part follows more closely the ideas in \cite{avila:heisenberg}, hence we leave the proof for the appendix.
\begin{thm}\label{th:ulcigrai}
Assume that $d_0 = d-1$. Assume also that $\Psi \in \mathscr{P}(d)$ is not a measurable coboundary for $T$ and that the function $\psi_{d-1}$ defined by \eqref{eq:dec} is smoothly cohomologous to a constant. Then, the suspension flow $\{T_t^{\Psi}\}_{t \in \R}$ is mixing.
\end{thm}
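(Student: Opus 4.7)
The plan is to adapt the Ratner-style shearing scheme from Avila-Forni-Ulcigrai \cite{avila:heisenberg} to the higher-dimensional setting, with $\mathbf{e}_d$ as the shearing direction. This choice is natural under our hypothesis $d_0 = d-1$: since $A$ is upper-triangular unipotent, $\mathbf{e}_d A = \mathbf{e}_d$, so $T$ acts by a translation in the $d$-th coordinate and preserves the $\mathbf{e}_d$-foliation of $\T^d$.

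First I would simplify the roof. Since $\psi_{d-1}$ is smoothly cohomologous to its mean $c := \int \psi_{d-1} \diff \misura_{d-1}$ via some smooth $u \colon \T^{d-1} \to \R$, the difference $\Psi - (c + \Psi_d^\perp)$ equals the smooth $T$-coboundary $(u \circ \pi_{d-1}) \circ T - u \circ \pi_{d-1}$. As in Remark \ref{rk:mixcob}, the map $(\mathbf{x},r) \mapsto (\mathbf{x}, r + u(\pi_{d-1}\mathbf{x}))$ gives a smooth measure-preserving conjugacy between $\{T_t^{\Psi}\}$ and the flow with the simpler roof $\tilde\Psi := c + \Psi_d^\perp$, so I may assume $\Psi = c + \Psi_d^\perp$. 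The computation at the beginning of \S\ref{S:case1} shows that the hypothesis on $\Psi$ forces $\Psi_d^\perp$ to not be a measurable coboundary for $T$; Theorem \ref{th:stretchingroof} then yields $\misura_d(|S_n(\Psi_d^\perp)| < C) \to 0$ for every $C > 0$. Moreover, since $\pi_{d-1} \circ T^i = T_{d-1}^i \circ \pi_{d-1}$ and $T^i$ acts as a translation on the $d$-th coordinate, we have $\int_0^1 S_n(\Psi_d^\perp)(\mathbf{x}_{d-1}, x_d) \diff x_d = 0$ for every $\mathbf{x}_{d-1}$. Combining these facts via Fubini, on a set of $\mathbf{x}_{d-1}$ of measure close to $1$ the function $x_d \mapsto S_n(\Psi_d^\perp)(\mathbf{x}_{d-1}, x_d)$ oscillates between large positive and large negative values with $L^2$-norm tending to infinity with $n$.

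To prove mixing it suffices to show $\langle f \circ T_t^{\Psi}, g \rangle \to \int f \int g$ for smooth $f, g$ supported in small boxes. I would foliate the support of $g$ into segments in the $\mathbf{e}_d$-direction and study how $T_t^{\Psi}$ acts on them. On each sub-interval of $[0,1)$ on which $n := n_t(\mathbf{x}_{d-1}, x_d, r)$ is constant, the image of such a segment has base coordinate $(T_{d-1}^n \mathbf{x}_{d-1}, x_d + \eta_n(\mathbf{x}_{d-1}))$ (from the translation property in $\mathbf{e}_d$) and fiber coordinate $r + t - nc - S_n(\Psi_d^\perp)(\mathbf{x}_{d-1}, x_d)$. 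Thus the base portion of the image moves at unit speed along $\mathbf{e}_d$ while the fiber portion undergoes the shear $-S_n(\Psi_d^\perp)(\mathbf{x}_{d-1}, \cdot)$, which has the large oscillation established above.

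The main obstacle is the decoupling step. Expanding $f$ in a Fourier series in the fiber variable $r$, the problem reduces to bounding oscillatory integrals of the form $\int e(k S_n(\Psi_d^\perp)(\mathbf{x}_{d-1}, x_d)) h(x_d) \diff x_d$, uniformly in $\mathbf{x}_{d-1}$ and the frequency $k \neq 0$, for a class of slowly varying test functions $h$. I would attack this via a van der Corput / stationary-phase argument, as in \cite{avila:heisenberg}. However, the higher-dimensional structure introduces an additional difficulty: the partition of $[0,1)$ into sub-intervals on which $n_t$ is constant has cardinality potentially growing with $t$, and the contributions from different sub-intervals must be summed with uniform control. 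This is the geometric localization step alluded to in \S\ref{section2}: I would localize the decoupling on each sub-interval, estimate its contribution using the uniform oscillation of $S_n(\Psi_d^\perp)$, and finally sum, arguing that boundary effects between adjacent sub-intervals contribute negligibly. I expect this summation with uniform control, rather than the decoupling on a single piece, to be the most technically delicate part of the proof.
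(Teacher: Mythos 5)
Your proposal departs from the paper's route at the final stage: the paper does not use a Fourier/oscillatory-integral or van der Corput argument, but rather the geometric shearing mechanism of Fayad--Ulcigrai (Lemma~\ref{th:mixingcriterion}): partition each $x_d$-fiber into sub-intervals on which $n_t$ is constant, prove the stretch of $S_n(\Psi)$ in the $x_d$-direction is large and comparable on neighbouring pieces, and then use unique ergodicity of $T$ to equidistribute the base points of the sheared sub-intervals. In particular your attribution of a stationary-phase argument to \cite{avila:heisenberg} is inaccurate; they use the same geometric criterion. Your reduction to $\Psi = c + \Psi_d^\perp$ via the smooth coboundary is correct and matches the paper.

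The genuine gap, however, lies in how you pass from the discrete-time growth supplied by Theorem~\ref{th:stretchingroof} to the growth along the flow. Theorem~\ref{th:stretchingroof} tells you that $\misura_d(|S_n(\Psi_d^\perp)| < C) \to 0$ as $n \to \infty$ along the integers; but in the mixing argument what appears is $S_{n_t(\mathbf{x})}(\Psi_d^\perp)$, where $n_t(\mathbf{x})$ depends on the base point. A priori, as $\mathbf{x}$ varies, $n_t(\mathbf{x})$ sweeps through roughly $t \cdot \sup\Psi_d^\perp / c$ different integers, so one cannot simply quote the discrete-$n$ result. This is precisely what the paper's Lemma on the sets $\widehat{X}(t,C)$ establishes: using the smooth-cohomology hypothesis a second time, it shows that on the complement of a good set the return-time count $\ntyx$ is pinned to a \emph{bounded} number of values, whence Theorem~\ref{th:stretchingroof} can be contradicted by Fubini. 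Your write-up asserts ``the function $x_d \mapsto S_n(\Psi_d^\perp)(\mathbf{x}_{d-1}, x_d)$ oscillates between large positive and large negative values with $L^2$-norm tending to infinity with $n$'' and then applies this to $n = n_t(\mathbf{x})$, but that step is not justified without a lemma of this kind. Absent it, neither a geometric nor an oscillatory-integral version of the argument closes: you do not know that the shear is large at the relevant $n$'s, only that it is large for most fixed large integers $n$. The ``geometric localization'' issue you flag (summing over the sub-intervals where $n_t$ is locally constant) is real but is handled in the paper by the comparability estimates of \cite[Lemmas 11--13]{avila:heisenberg}; the discrete-to-continuous transition is the step that actually requires a new idea, and it is missing from your proposal.
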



\section{Proof of Theorem \ref{th:intro1}-part b}\label{S:dom}

In this section we prove Theorem \ref{th:intro1}-(b) by constructing the set $\mathscr{M}=\mathscr{M}(d)$, dense in $ \mathscr{P}(d)$ w.r.t.~$\norma{\cdot}_{\infty}$, which consists of roof functions inducing a mixing suspension flow. 
We characterize smooth coboundaries for skew-translations in terms of their Fourier coefficients and we apply Proposition \ref{th:induct} inductively to produce mixing suspension flows in higher dimension, starting from the ones in dimension 2, see \cite[\S5]{avila:heisenberg}.

If we denote again by $\pi_i \colon \T^d \to \T^i$ the projection onto the first $i$ coordinates and by $\pi_i(\mathbf{x}) = \mathbf{x}_i$, we have a sequence of factors
\begin{equation}\label{eq:chain}
(\T^d,T) \mapsto (\T^{d-1}, T_{d-1}) \mapsto \cdots \mapsto (\T^{2}, T_{2}),
\end{equation}
where $T_{i} \mathbf{x}_i = \mathbf{x}_i A_i + \mathbf{b}_i$ and $A_i = (a_{l,m})_{1 \leq l,m \leq i}$.  
\begin{defin}
Let $\Psi \in \mathscr{P}(d)$ be written as
$$
\Psi = \psi_{2} \circ \pi_2 + \Psi^{\perp}_{3} \circ \pi_3+ \cdots + \Psi^{\perp}_{d}, \text{\ with\ } \psi_{2} \in \mathscr{P}(2),\text{\ and\ } \Psi^{\perp}_{i} \in \mathscr{Q}{(i)}, \text{\ for\ } i=3, \dots, d.
$$
We say that $\Psi \in \mathscr{M}(d)$ if $\psi_{2}$ induces a mixing suspension flow for the 2-dimensional skew-translation $(\T^2, T_2)$ and $\Psi^{\perp}_{i}$ is a smooth coboundary for $T_i$ for all $i=3, \dots, d$.
\end{defin}

Every function in $\mathscr{M}(d)$ induces a mixing suspension flow by Remark \ref{rk:mixcob} and Proposition \ref{th:induct} applied inductively in \eqref{eq:chain} up to the last factor. Moreover, the set of mixing roofs $\psi_{2}$ is dense in $\mathscr{P}(2)$ by \cite{avila:heisenberg} and, by Lemma \ref{th:densecob}, the set of smooth coboundaries $\Psi^{\perp}_{i}$ for $T_i$ is dense in $\mathscr{Q}(i)$. Therefore, $\mathscr{M}(d)$ is dense in $\mathscr{P}(d)$, and hence in $\mathscr{C}(\T^{d})$.

We now characterize the set $\mathscr{M}(d)$ so that it is possible to effectively check if a trigonometric polynomial $\Psi$ belongs to $\mathscr{M}(d)$.
The case of $\psi_{2}$ has already been treated in \cite[\S5]{avila:heisenberg}; let us analyze when $\Psi^{\perp}_{i} \in \mathscr{Q}(i)$ is a smooth coboundary for $T_i$.

The following lemma is easy to be verified.
\begin{lemma}
Let $\mathcal{O}_{i}$ be the set of orbits of the action of the transpose $A_i^T$ of $A_i$ on $\Z^i$ and for any $\omega \in \mathcal{O}_{i}$ let 
$$
\mathscr{H}_{\omega} = \bigoplus_{\mathbf{l}\in \omega} \C e({\mathbf{l} \cdot  \mathbf{x}_i }).
$$
The space $L^2(\T^i)$ admits an orthogonal splitting
$$
L^2(\T^i) = \bigoplus_{\omega \in \mathcal{O}_{i}}\mathscr{H}_{\omega},
$$
and all the components are $T_i$-invariant.
\end{lemma}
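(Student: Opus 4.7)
The plan is a direct Fourier-analytic computation. First I would recall that the characters $\{e(\mathbf{l}\cdot\mathbf{x}_i)\}_{\mathbf{l}\in\Z^i}$ form a complete orthonormal basis of $L^2(\T^i)$, so that
$$
L^2(\T^i) = \bigoplus_{\mathbf{l}\in\Z^i} \C\, e(\mathbf{l}\cdot\mathbf{x}_i).
$$
Since the orbits $\omega\in\mathcal{O}_i$ of $A_i^T$ acting on $\Z^i$ partition $\Z^i$, regrouping characters orbitwise gives precisely the claimed orthogonal splitting $L^2(\T^i) = \bigoplus_{\omega\in\mathcal{O}_i} \mathscr{H}_\omega$, with orthogonality of distinct summands inherited from orthogonality of distinct characters.

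Next I would verify that each $\mathscr{H}_\omega$ is $T_i$-invariant, meaning invariant under the Koopman operator $U_{T_i}\colon f\mapsto f\circ T_i$ on $L^2(\T^i)$. Using $T_i\mathbf{x}_i = \mathbf{x}_i A_i + \mathbf{b}_i$ and the identity $\mathbf{l}\cdot(\mathbf{x}_i A_i) = (\mathbf{l} A_i^T)\cdot \mathbf{x}_i$, one finds
$$
e(\mathbf{l}\cdot\mathbf{x}_i)\circ T_i \;=\; e(\mathbf{l}\cdot\mathbf{b}_i)\,e\bigl((\mathbf{l} A_i^T)\cdot\mathbf{x}_i\bigr).
$$
Thus $U_{T_i}$ maps the character indexed by $\mathbf{l}$ to a unit scalar multiple of the character indexed by $\mathbf{l} A_i^T$. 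Since by definition the orbit $\omega\ni\mathbf{l}$ is closed under the action $\mathbf{l}\mapsto \mathbf{l} A_i^T$, the image lies in $\mathscr{H}_\omega$, proving invariance on the dense subspace of finite linear combinations of characters; extending by $L^2$-continuity (and the fact that $U_{T_i}$ is an isometry because $T_i$ preserves $\misura_i$) gives invariance on all of $\mathscr{H}_\omega$.

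There is no serious obstacle: the whole argument is a one-line Fourier computation together with orbit bookkeeping. The only point requiring a moment of care is the convention that skew-translation is written with $\mathbf{x}_i$ as a row vector acted on the right by $A_i$, so that under the dual pairing $\langle \mathbf{l},\mathbf{x}_i\rangle = \mathbf{l}\cdot\mathbf{x}_i$ the induced action on frequencies $\mathbf{l}$ is by $A_i^T$ rather than $A_i$; this is exactly the transpose appearing in the statement.
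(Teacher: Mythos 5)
The paper gives no proof of this lemma, remarking only that it is ``easy to be verified''; your argument is exactly the intended verification and is correct. The two key points — that $\{e(\mathbf{l}\cdot\mathbf{x}_i)\}_{\mathbf{l}\in\Z^i}$ is an orthonormal basis so that grouping by $A_i^T$-orbits yields an orthogonal splitting, and that the Koopman operator sends $e(\mathbf{l}\cdot\mathbf{x}_i)$ to $e(\mathbf{l}\cdot\mathbf{b}_i)\,e((\mathbf{l}A_i^T)\cdot\mathbf{x}_i)$, hence preserves each $\mathscr{H}_\omega$ — are both stated and justified cleanly, including the row-vector/transpose bookkeeping and the extension from the dense span of characters to all of $\mathscr{H}_\omega$ by isometry.
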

Therefore, it is enough to investigate the existence of solutions $u$ for the cohomological equation $\Psi^{\perp}_{i} = u \circ T_i - u$ in each component $\mathscr{H}_{\omega}$. The following result is a generalisation in higher dimension of a theorem by Katok \cite[Theorem 11.25]{katok:combinatorial}.

\begin{lemma}\label{th:explicitcob}
Let $\omega \in \mathcal{O}_{i}$; consider $\mathbf{l}^{(0)} \in \omega$ and denote the elements of the orbit $\omega$ by $\mathbf{l}^{(k)} = \mathbf{l}^{(0)} (A_i^T)^k$ for $k \in \Z$.
The function 
$$
\Psi^{\perp}_{i} (\mathbf{x}_i) =  \sum_{\mathbf{l} \in [-m,m]^{i} \cap \omega,\ l_i \neq 0} c_{\mathbf{l}} e(\mathbf{l} \cdot \mathbf{x}_i) \in  \mathscr{Q}(i) \cap \mathscr{H}_{\omega}
$$ 
is a smooth coboundary for $T_i$ if and only if 
\begin{equation}\label{eq:invariantdistrib}
\sum_{k=1}^{N}  c_{\mathbf{l}^{(k)}} e\left( - \sum_{j=0}^{k-1} \mathbf{l}^{(j)} \cdot \mathbf{b}_i \right) +  c_{\mathbf{l}^{(0)}}  +   \sum_{k=1}^{N-1}  c_{\mathbf{l}^{(-k)}} e\left(  \sum_{j=1}^{k} \mathbf{l}^{(-j)} \cdot \mathbf{b}_i \right) = 0,
 \end{equation}
where $N \in \N$ is such that $c_{\mathbf{l}^{(n)}} = 0$ for all $n \geq N$.
\end{lemma}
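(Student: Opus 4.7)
The approach would be to pass to Fourier coefficients and solve a linear recursion along the orbit $\omega$. Expanding $u = \sum_{\mathbf{l}} u_{\mathbf{l}}\, e(\mathbf{l} \cdot \mathbf{x}_i)$ and using the identity $\mathbf{l} \cdot (\mathbf{x}_i A_i) = (\mathbf{l} A_i^T) \cdot \mathbf{x}_i$, the cohomological equation $\Psi^{\perp}_{i} = u \circ T_i - u$, read mode by mode on $\omega$, reduces to the first order recursion
\begin{equation*}
c_{\mathbf{l}^{(k)}} \;=\; u_{\mathbf{l}^{(k-1)}}\, e\bigl(\mathbf{l}^{(k-1)}\cdot \mathbf{b}_i\bigr) \;-\; u_{\mathbf{l}^{(k)}}, \qquad k \in \Z.
\end{equation*}
Setting $B_k = \sum_{j=0}^{k-1} \mathbf{l}^{(j)} \cdot \mathbf{b}_i$ for $k \geq 0$ and $B_{-k} = -\sum_{j=1}^{k} \mathbf{l}^{(-j)} \cdot \mathbf{b}_i$ for $k \geq 1$, and performing the gauge change $v_k = u_{\mathbf{l}^{(k)}}\, e(-B_k)$, this becomes the telescoping identity $v_k - v_{k-1} = - c_{\mathbf{l}^{(k)}} e(-B_k)$, which I would solve explicitly both forward and backward from the initial datum $v_0 = u_{\mathbf{l}^{(0)}}$.

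To close the argument, I would invoke smoothness of $u$. Since the last coordinate $l_i^{(k)} = l_i^{(0)} \neq 0$ is preserved by $A_i^T$, the orbit $\omega$ is bi-infinite and $|\mathbf{l}^{(k)}| \to \infty$ polynomially as $|k| \to \infty$. Outside the finite support of $c$ the recursion yields $|u_{\mathbf{l}^{(k)}}| = |v_k|$ constant in $k$; rapid decay of the Fourier coefficients of a smooth function then forces $v_k = 0$ for all sufficiently large $|k|$. Imposing $v_k = 0$ for $k \geq N$ yields one closed form for $v_0$, namely $\sum_{k=1}^{N} c_{\mathbf{l}^{(k)}} e(-B_k)$, while $v_{-k} = 0$ for $k \geq N-1$ yields a second one, namely $-c_{\mathbf{l}^{(0)}} - \sum_{k=1}^{N-1} c_{\mathbf{l}^{(-k)}} e(-B_{-k})$. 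Their equality, combined with the identity $e(-B_{-k}) = e\bigl(\sum_{j=1}^{k} \mathbf{l}^{(-j)} \cdot \mathbf{b}_i\bigr)$, is precisely \eqref{eq:invariantdistrib}.

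For the converse, if \eqref{eq:invariantdistrib} holds I would define $u$ orbit by orbit from the common value $v_0$: the resulting sequence $(u_{\mathbf{l}^{(k)}})_{k \in \Z}$ is compactly supported, so the restriction of $u$ to each orbit is a trigonometric polynomial, and summing over the finitely many orbits meeting the support of $\Psi^{\perp}_{i}$ yields a smooth (in fact, polynomial) coboundary. I expect the main obstacle to be purely bookkeeping: keeping the phases $e(\pm B_k)$, the direction of the recursion, and the summation ranges $\{1, \ldots, N\}$ versus $\{1, \ldots, N-1\}$ aligned so that the two one-sided consistency conditions glue into the single identity \eqref{eq:invariantdistrib}. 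A minor technical point is to verify that the orbit is bi-infinite rather than trivial, which follows from $l_i^{(0)} \neq 0$ together with the unipotent structure of $A_i$.
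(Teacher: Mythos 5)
Your proposal is correct and follows essentially the same route as the paper: expand in Fourier modes on the orbit $\omega$, unroll the resulting first-order recursion forward and backward, and use smoothness of $u$ (rapid decay of Fourier coefficients, combined with $\|\mathbf{l}^{(k)}\|_\infty \to \infty$ since $l_i \neq 0$ and the superdiagonal entry is nonzero) to force the two one-sided expressions for $u_{\mathbf{l}^{(0)}}$ to agree, which is exactly \eqref{eq:invariantdistrib}. The gauge change $v_k = u_{\mathbf{l}^{(k)}} e(-B_k)$ is a pleasant streamlining of the bookkeeping, but the underlying argument coincides with the paper's.
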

\begin{proof}
There exists a smooth solution $u$ to the cohomological equation $\Psi^{\perp}_{i} = u \circ T_i - u$ if and only if for every $\mathbf{l} \in [-m,m]^{i} \cap \omega,\ l_i \neq 0$ we have
$$
 \sum_{\mathbf{l} \in [-m,m]^{i} \cap \omega,\ l_i \neq 0} c_{\mathbf{l}} e(\mathbf{l} \cdot \mathbf{x}_i) =  \sum_{\mathbf{l} \in \omega } u_{\mathbf{l}} e(\mathbf{l} \cdot (\mathbf{x}_iA_i+ \mathbf{b}_i)) -  \sum_{\mathbf{l} \in \omega} u_{\mathbf{l}} e(\mathbf{l} \cdot \mathbf{x}_i), 
$$
where $u_{\mathbf{l}}$ are the Fourier coefficients of $u$. Equating coefficients, we get 
$$
c_{\mathbf{l}} = u_{\mathbf{l}(A_i^T)^{-1}} e(\mathbf{l}(A_i^T)^{-1} \cdot\mathbf{b}_i) - u_{\mathbf{l}}
$$
which implies, considering $\mathbf{l}^{(0)} \in \omega$,
\begin{equation*}
\begin{split}
& u_{\mathbf{l}^{(0)}} = u_{\mathbf{l}^{(-1)}} e(\mathbf{l}^{(-1)} \cdot \mathbf{b}_i) - c_{\mathbf{l}^{(0)}} \text{\ \ \ and\ \ \ } u_{\mathbf{l}^{(0)}} = (u_{\mathbf{l}^{(1)}} + c_{\mathbf{l}^{(1)}}) e( - \mathbf{l}^{(0)} \cdot \mathbf{b}_i). 
\end{split}
\end{equation*}
Recursively, for all $N \geq 1$ we obtain
\begin{equation*}
\begin{split}
& u_{\mathbf{l}^{(0)}} = u_{\mathbf{l}^{(-N)}} e\left( \sum_{k=1}^N \mathbf{l}^{(-k)} \cdot \mathbf{b}_i \right) -  \sum_{k=1}^{N-1}  c_{\mathbf{l}^{(-k)}} e\left(  \sum_{j=1}^{k} \mathbf{l}^{(-j)} \cdot \mathbf{b}_i \right) - c_{\mathbf{l}^{(0)}}, \\
& u_{\mathbf{l}^{(0)}} =  u_{\mathbf{l}^{(N)}} e\left( - \sum_{k=0}^{N-1} \mathbf{l}^{(k)} \cdot \mathbf{b}_i \right) + \sum_{k=1}^{N}  c_{\mathbf{l}^{(k)}} e\left( -  \sum_{j=0}^{k-1} \mathbf{l}^{(j)} \cdot \mathbf{b}_i\right). 
\end{split}
\end{equation*}

By assumption, $a_{i,i+1}\neq 0$ and $l_i \neq 0$; hence, for $\modulo{N} \to \infty$, we have $\norma{\mathbf{l}^{(N)} }_{\infty} \geq \modulo{l_{i-1}-a_{i,i+1}Nl_i} \to \infty$. Therefore, if a solution $u$ exists, we have $u_{\mathbf{l}^{(N)}} \to 0$. We obtain two expressions for $u_{\mathbf{l}^{(0)}}$ 
\begin{equation*}
\begin{split}
& u_{\mathbf{l}^{(0)}} = \lim_{N \to \infty} - c_{\mathbf{l}^{(0)}} -   \sum_{k=1}^{N-1}  c_{\mathbf{l}^{(-k)}} e\left(   \sum_{j=1}^{k} \mathbf{l}^{(-j)} \cdot \mathbf{b}_i \right), \text{\ } u_{\mathbf{l}^{(0)}} =  \lim_{N \to \infty} \sum_{k=1}^{N}  c_{\mathbf{l}^{(k)}} e\left( - \sum_{j=0}^{k-1} \mathbf{l}^{(j)} \cdot \mathbf{b}_i \right),
\end{split}
\end{equation*}
which, equated, gives \eqref{eq:invariantdistrib}. We remark that the expressions above are finite sums, since there are only finitely many $k$ such that $c_{\mathbf{l}^{(k)}} \neq 0$.

On the other hand, if \eqref{eq:invariantdistrib} holds, defining $u_{\mathbf{l}}$ as above gives us the Fourier coefficients of the solution $u$ to the cohomological equation.
\end{proof}

\begin{example} 
Consider, for example, a uniquely ergodic skew shift over $\T^3$ of the form
$$
T(x,y,z) = (x,y,z) A + (b_x, b_y, b_z), \text{\ \ \ with\ \ \ } A=
\begin{pmatrix}
1 & 1 & 2 \\
0 & 1 & 2 \\
0 & 0 & 1
\end{pmatrix}
.
$$
First, consider the quotient system $T_2(x,y) = (x,y) A_2 + (b_x, b_y)$, where $A_2 = 
\left(\begin{smallmatrix}
1 & 1\\
0 & 1
 \end{smallmatrix} \right)$. Any function 
$$
\psi_2(x,y)= c_0+ \sum_{0 \leq \modulo{k} \leq m} c_{(k,1)} e(kx+y) 
$$ 
which satisfies
$$
\sum_{0 \leq \modulo{k} \leq m} c_{(k,1)} e \left( \frac{k- k^2}{2}b_x - k b_y \right) \neq 0,
$$
iduces a mixing suspension flow over the quotient system $(\T^2, T_2)$, as shown in \cite[\S2.4]{avila:heisenberg}. 

Straightforward computations give us
$$
\sum_{j=0}^{k-1} (A^T)^j 
= \sum_{j=0}^{k-1} 
\begin{pmatrix}
1 & 0 & 0 \\
j & 1 & 0 \\
j^2+j & 2j & 1
\end{pmatrix}
=
\begin{pmatrix}
k & 0 & 0 \\
-\frac{k^2-k}{2} & k & 0 \\
\frac{k^3-k}{3} &  k^2-k  & k
\end{pmatrix}
$$
and 
$$
\sum_{j=1}^{k} ( A^T)^{-j} 
= \sum_{j=1}^k 
\begin{pmatrix}
1 & 0 & 0 \\
-j & 1 & 0 \\
j^2-j & -2j & 1
\end{pmatrix}
=
\begin{pmatrix}
k & 0 & 0 \\
-\frac{k^2+k}{2} & k & 0 \\
-\frac{k^3-k}{3} & -k^2-k & k
\end{pmatrix},
$$
for all $k \geq 1$. Fix $\mathbf{l}^{(0)} = (0,0,1)$, then $\mathbf{l}^{(k)} = (k^2+k,2k,1 )$. By Lemma \ref{th:explicitcob}, any function
$$
\Psi_3^{\perp}(x,y,z) =  \sum_{0 \leq \modulo{k} \leq m} c_{(k^2+k,2k,1 )} e((k^2+k)x+2ky+z)
$$
satisfying
\begin{equation}\label{eq:claim}
 \sum_{0 \leq \modulo{k} \leq m}  c_{(k^2+k,2k,1 )} e\left( - \left(\frac{k^3-k}{3}b_x + (k^2-k)b_y +k b_z \right) \right) =0
 \end{equation}
is a smooth coboundary for $T$. Proposition \ref{th:induct} implies that $\Psi = \psi_2 + \Psi_3^{\perp} \in \mathscr{M}(3)$ induces a mixing suspension flow over $(\T^3,T)$.
\end{example}


\section{Proof of Proposition \ref{th:induct}}\label{proofofwrap}

We show that, under the assumption of Proposition \ref{th:induct}, if the quotient suspension flow $\{ \widehat{T}_t^{\psi} \}_{t \in \R}$ is mixing, then $\{ T_t^{\psi \circ \pi} \}_{t \in \R}$ is mixing.


\subsection{Preliminaries}
Let us denote $\Psi := \psi \circ \pi$, which we remark is constant along the $x_d$-coordinate, and assume $\int \Psi = 1$. Let $c$ and $C$  be its minimum and maximum respectively. 
Consider $Q = \prod_{j=1}^{d} [w_j,w_j'] \times [q_1, q_2]$ and $R = \prod_{j=1}^{d} [v_j,v_j'] \times [r_1, r_2]$ two cubes in $\{ (\mathbf{x}, r) : \mathbf{x} \in \T^d \text{ and } 0 \leq r <\Psi(\mathbf{x})\}$; it is sufficient to prove mixing for sets of this form. 
Denote by $\widehat{Q}= \pi(Q), \widehat{R}=\pi(R)$ the corresponding cubes in the quotient system, namely $\widehat{Q} = \prod_{j =1}^{d-1} [w_j,w_j'] \times [q_1, q_2]$ and $\widehat{R} =  \prod_{j =1}^{d-1}  [v_j,v_j'] \times [r_1, r_2]$. For any $\varepsilon >0$, define
\begin{equation*}
\begin{split}
&\widehat{Q}_{-\varepsilon} = \prod_{j=1}^{d-1}[w_j+\varepsilon,w_j' - \varepsilon] \times [q_1 + \varepsilon, q_2-\varepsilon]  \subset \widehat{Q}, \\ 
&\widehat{R}_{-\varepsilon} = \prod_{j=1}^{d-1}[v_j+\varepsilon,v_j' - \varepsilon] \times [r_1 + \varepsilon, r_2-\varepsilon]  \subset \widehat{R}.
\end{split}
\end{equation*}
Let $\mathbf{v} \in \Z^d$ be such that $\mathbf{v}A = \mathbf{v} + a \mathbf{e}_d$, with $a \neq 0$. Up to changing $\mathbf{v}$ with $\mathbf{v} - (\mathbf{v} \cdot \mathbf{e}_d) \mathbf{e}_d$ and up to rescaling, we can assume that $\mathbf{v} \cdot \mathbf{e}_d = 0$ and the coordinates of $\mathbf{v}$ are coprime. Denote by $\partial_{\mathbf{v}}$ the directional derivative along $\mathbf{v}$, namely, if $f \in \mathscr{C}^1(\T^d)$, let $\partial_{\mathbf{v}} f = \nabla f \cdot \mathbf{v}$, where $\nabla f$ is the gradient of $f$.
Fix $\varepsilon >0$ and choose $0<\varepsilon_0<1$ such that $3(d C + 1) \varepsilon_0 < \varepsilon$. 
%
Recalling \eqref{eq:birksum}, let $S_n({\partial}_{\mathbf{v}}\Psi)$ be the Birkhoff sum up to $n$ of the derivative of $\Psi$ along ${\mathbf{v}}$.
By Birkhoff Ergodic Theorem, since $T$ is uniquely ergodic and ${\partial}_{\mathbf{v}}\Psi$ has zero average, there exists $N \geq 1$ such that for all $n \geq N$ we have 
\begin{equation}\label{eq:bet}
\frac{1}{n} S_n({\partial}_{\mathbf{v}}\Psi)(\mathbf{x}) \leq \frac{ac}{2C} \varepsilon_0,
\end{equation}
for all $\mathbf{x} \in \T^{d}$.

For every $\mathbf{x}, \mathbf{x}\rq{} \in \T^d$, from the definition \eqref{eq:sflow} of $n_t(\mathbf{x})$ it follows immediately that 
$$
n_t(\mathbf{x}) c \leq S_{n_t(\mathbf{x})}(\Psi)(\mathbf{x}) \leq t < S_{n_t(\mathbf{x}\rq{})+1}(\Psi)(\mathbf{x}\rq{}) \leq (n_t(\mathbf{x}\rq{})+1) C \leq 2 n_t(\mathbf{x}\rq{}) C,
$$
for all $t > C$. Then, we have $n_t(\mathbf{x}) /  n_t(\mathbf{x}\rq{})  \leq 2 C/ c$.
Choose $\overline{t} >0$ such that for all $t \geq \overline{t}$
\begin{equation}\label{eq:conditions}
\begin{split}
\text{(i)\ } & t \geq (N+1) C \text{\ \ \ so that\ \ \ } n_t(\mathbf{x}) > t/ C - 1 \geq  N; \\
\text{(ii)\ } &  \frac{\norma{\mathbf{v}} C}{a(t-C)} \leq \varepsilon_0 \text{\ \ \ so that\ \ \ } \frac{\norma{\mathbf{v}} }{ n_t(\mathbf{x}) a } < \frac{ \norma{\mathbf{v}} C}{a(t-C)} \leq \varepsilon_0; \\
\text{(iii)\ } &  \modulo{\misura \left( \widehat{T}_t^{\psi} \big(\widehat{R}_{-\varepsilon_0} \big)\cap \widehat{Q}_{-\varepsilon_0} \right) - \misura \big( \widehat{R}_{-\varepsilon_0} \big) \misura \big( \widehat{Q}_{-\varepsilon_0}\big)} \leq \varepsilon_0.
\end{split}
\end{equation}
The third condition above is guaranteed by mixing of the suspension flow $\{ \widehat{T}_t^{\psi} \}_{t \in \R}$ on the quotient $\T^{d-1}$.


\subsection{Wrapping segments}
We now consider segments of length less than $\varepsilon_0$ parallel to $\mathbf{v}$ contained in $R$ and we study their evolution after sufficiently large time $t$. 
Recalling \eqref{eq:factor}, fix $t \geq \overline{t}$ and consider a point $\mathbf{r} = (\mathbf{x},r) \in R$ such that $\mathbf{r} + ( n_t(\mathbf{r}) a )^{-1} \mathbf{v} \in R$. 
Let $\gamma_{\mathbf{r}}(s) = \mathbf{r} + s \mathbf{v} $, with $0 \leq s \leq \overline{s}= (n_t(\mathbf{r}) a)^{-1}$, be the segment parallel to $\mathbf{v}$ starting from $\mathbf{r}$ of length $ \overline{s}$. 
Condition \eqref{eq:conditions}-(ii) ensures that the length of  $\gamma_{\mathbf{r}}$ is less than $\varepsilon_0$ so that, by hypothesis, it is all contained in $R$, see Figure \ref{fig:3}. We will prove that, if there exists a point of $\pi \circ T_t^{\Psi} (\gamma_{\mathbf{r}})$ which is contained in $ \widehat{Q}_{-\varepsilon_0}$, then all the curve is contained in $\widehat{Q}$.

Let us denote  by $\Gamma_{\mathbf{r}}(s) = T_t^{\Psi} (\gamma_{\mathbf{r}}(s))$ the image of $\gamma_{\mathbf{r}}(s)$ under $T_t^{\Psi}$ and let us compute its tangent vector $\partial_s  \Gamma_{\mathbf{r}}(s) $ at a generic point. For almost every $s$, the value $n_t(\gamma_{\mathbf{r}}(s))$ is locally constant; from the definition \eqref{eq:sflow}, we get
\begin{equation}\label{eq:tgvec}
\begin{split}
\partial_s  \Gamma_{\mathbf{r}}(s)  &= \partial_s \Big(T^{n_t(\gamma_{\mathbf{r}}(s))}\gamma_{\mathbf{r}}(s),\ t - S_{n_t(\gamma_{\mathbf{r}}(s))}(\Psi)(\gamma_{\mathbf{r}}(s)) \Big) \\
&= \Bigg(\mathbf{v} A^{n_t(\gamma_{\mathbf{r}}(s))},\ - \sum_{j=0}^{n_t(\gamma_{\mathbf{r}}(s))-1} \nabla \Psi \circ T^j (\gamma_{\mathbf{r}}(s)) \cdot \mathbf{v} A^j \Bigg) \\
&= \Bigg(  \mathbf{v} + n_t(\gamma_{\mathbf{r}}(s)) a \mathbf{e}_d,\  - \sum_{j=0}^{n_t(\gamma_{\mathbf{r}}(s))-1} (\partial_{\mathbf{v}}\Psi) \circ T^j (\gamma_{\mathbf{r}}(s))  \Bigg),
\end{split}
\end{equation}
where we used the fact that the partial derivative of $\Psi= \psi \circ \pi$ in the $d$-th variable is zero, since $\Psi$ is constant along the $x_d$-coordinate.

We first show that the function $s \mapsto n_t(\gamma_{\mathbf{r}}(s)) $ is constant. In order to do this, we estimate the maximal distance in the $t$-coordinate between two points in the curve $\Gamma_{\mathbf{r}}(s) $.
By definition and \eqref{eq:tgvec}, it equals
$$
\max_{0 \leq s\rq{},s\rq{}\rq{} \leq \overline{s}} \modulo{(\Gamma_{\mathbf{r}}(s') - \Gamma_{\mathbf{r}}(s'') ) \cdot \mathbf{e}_{d+1}} \leq \int_0^{\overline{s}} \modulo{ \sum_{j=0}^{n_t(\gamma_{\mathbf{r}}(s)) -1} (\partial_{\mathbf{v}}\Psi) \circ T^j (\gamma_{\mathbf{r}}(s)) } \diff s. 
$$
From the choice of $N$, \eqref{eq:bet} and \eqref{eq:conditions}-(i), it follows
\begin{equation*}
\begin{split}
& \max_{0 \leq s\rq{},s\rq{}\rq{} \leq \overline{s}} \modulo{( \Gamma_{\mathbf{r}}(s') - \Gamma_{\mathbf{r}}(s'') ) \cdot \mathbf{e}_{d+1}} \leq \int_0^{\overline{s}} \modulo{ S_{n_t(\gamma_{\mathbf{r}}(s))}(\partial_{\mathbf{v}}\Psi)(\gamma_{\mathbf{r}}(s)) } \diff s \\
& \qquad \leq \frac{ac}{2C}\varepsilon_0 \int_0^{\overline{s}} n_t(\gamma_{\mathbf{r}}(s))\diff s \leq \frac{c}{2C} \varepsilon_0 \frac{\max_s n_t(\gamma_{\mathbf{r}}(s)) }{  n_t(\mathbf{r})}  \leq \varepsilon_0.
\end{split}
\end{equation*}
In a similar way, using \eqref{eq:conditions}-(ii), the maximal distance in any other coordinate $x_i$ for $1 \leq i \leq d-1$ between two points in $\Gamma_{\mathbf{r}}(s)$ can be bounded by 
$$
\max_{0 \leq s\rq{},s\rq{}\rq{} \leq \overline{s}} \modulo{(\Gamma_{\mathbf{r}}(s') - \Gamma_{\mathbf{r}}(s'') ) \cdot \mathbf{e}_i} \leq \norma{\mathbf{v}} \int_0^{\overline{s}} \diff s = \frac{\norma{\mathbf{v}}}{n_t(\mathbf{r}) a } \leq \varepsilon_0.
$$ 
In particular, if $\pi ( \gamma_{\mathbf{r}}(s)) \in \widehat{T}_{-t}^{\psi}(\widehat{Q}_{-\varepsilon_0})$ for some $0 \leq s \leq \overline{s}$, then $\pi  \circ \Gamma_{\mathbf{r}}(s) \subset \widehat{Q}$ and therefore we deduce that $n_t$ is constant along $\gamma_{\mathbf{r}}(s)$ and equal to $n_t(\mathbf{r})$, see Figure \ref{fig:3}.

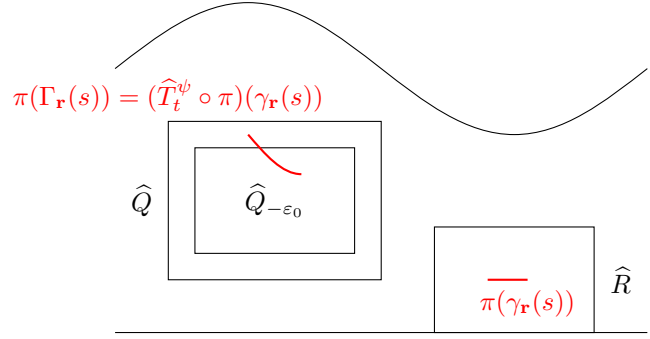
\begin{figure}[h!]
\centering
\begin{tikzpicture}[scale=3.5]
\clip (-0.8,-0.4) rectangle (3,1.5);
\draw (0,0) -- (2,0);
\draw (0,1) sin (0.5,1.25) cos (1,1) sin (1.5,0.75) cos (2,1);
\draw (0.2, 0.2) rectangle (1,0.8);
\draw[white] (0.1,0.5)  node{\textcolor{black}{$\widehat{Q}$}};
\draw (0.3, 0.3) rectangle (0.9,0.7);
\draw[white] (0.6,0.5)  node{\textcolor{black}{$\widehat{Q}_{-\varepsilon_0}$}};
\draw (1.2,0) rectangle (1.8,0.4);
\draw[white] (1.9,0.2)  node{\textcolor{black}{$\widehat{R}$}};
\draw[thick, red] (1.4,0.2) -- (1.55,0.2) node [anchor=north] {\textcolor{red}{$\pi (\gamma_{\mathbf{r}}(s))$}};
\draw[thick, red] (0.5,0.75) sin (0.7,0.6);
\draw[white] (0.2, 0.9)  node{\textcolor{red}{$ \pi (\Gamma_{\mathbf{r}}(s)) = ( \widehat{T}_t^{\psi} \circ \pi) (\gamma_{\mathbf{r}}(s))$}};
\end{tikzpicture}
\caption{Quotient system $(\T^{d-1}, \widehat{T})$: if some point of the curve $ \pi (\Gamma_{\mathbf{r}}(s))$ is contained in $\widehat{Q}_{-\varepsilon_0}$, then the whole curve is contained in $\widehat{Q}$.}
\label{fig:3}
\end{figure}

Since $n_t(\gamma_{\mathbf{r}}(s)) =n_t(\mathbf{r})$, by \eqref{eq:tgvec} the speed in the $x_d$-coordinate is constant and equal to $n_t(\mathbf{r})a$. Moreover, the distance in the $x_{d}$-coordinate of the endpoints of $\Gamma_{\mathbf{r}}(s)$ is equal to 
$$
\modulo{\int_0^{\overline{s}}   n_t(\gamma_{\mathbf{r}}(s)) a  \diff s} = \modulo{\int_0^{\overline{s}}  n_t(\mathbf{r}) a \diff s} = \modulo{\overline{s}  n_t(\mathbf{r}) a } = 1. 
$$


\subsection{Final estimates}

In order to estimate the measure of $R \cap T_{-t}^{\Psi} (Q)$, we want to apply Fubini\rq{}s Theorem and integrate along each circle parallel to $\mathbf{v}$. Indeed, the torus $\T^d$ is a circle bundle over a closed submanifold $W$ isomorphic to a $(d-1)$-dimensional torus with fibers parallel to $\mathbf{v} \in \Z^d$. Let us consider the corresponding decomposition of the Lebesgue measure as product measure, namely $\diff \misura_d = \diff \mathbf{v}\wedge \omega$, where $\omega$ is a volume form over $W$.

Let 
$$
S = \left\{ \mathbf{r} \in R : \pi(\mathbf{r}) \in \widehat{R}_{-\varepsilon_0} \cap  \widehat{T}_{-t}^{\psi}(\widehat{Q}_{-\varepsilon_0})\right\}.
$$
We want to consider all segments $\gamma_{\mathbf{r}}(s)$ that contain at least one point in $S$. For any point $\mathbf{w} \in W$, let us partition the fiber $\mathbf{w} + [0,1) \mathbf{v}$ into segments of length $\overline{s}$; more precisely define
$$
\mathbf{r}_0(\mathbf{w}) = \mathbf{w},\ \mathbf{r}_1(\mathbf{w})= \mathbf{r}_0 + (n_t(\mathbf{r}_0)a)^{-1} \mathbf{v}, \dots, \mathbf{r}_{i+1}(\mathbf{w})= \mathbf{r}_i + (n_t(\mathbf{r}_i)a)^{-1} \mathbf{v}, 
$$
up to the largest $i$ such that $\sum_i (n_t(\mathbf{r}_i)a)^{-1} <1$, and let $R_{-}(t)$ be the union for $\mathbf{w} \in W$ of all segments $\gamma_{\mathbf{r}_i(\mathbf{w})}(s)$ which contain at least one point in $S$.
Notice that $ \misura_d(S) \leq \misura_d(R_{-}(t)) $; moreover, recalling the definition of $R$, by Fubini\rq{}s Theorem,
\begin{equation}\label{eq:RandS}
\begin{split}
& \misura_d(S) = \misura_{d-1} \left( \widehat{R}_{-\varepsilon_0} \cap \widehat{T}_{-t}^{\psi}(\widehat{Q}_{-\varepsilon_0}) \right) \modulo{v_d\rq{} - v_d}, \\ 
& \misura_d(R_{-}(t)) = \int_{\T^d} \one_{R_{-}(t)} \diff \mathbf{v}\wedge \omega = \int_W \Bigg( \sum_{i : \gamma_{\mathbf{r}_i(\mathbf{w})} \subset R_{-}(t)}  (n_t(\mathbf{r}_i)a)^{-1} \Bigg) \omega(\mathbf{w}).
\end{split}
\end{equation}


By definition of $R_{-}(t)$ and $S$, we have $R_{-}(t) \subset R$, thus
\begin{equation}\label{eq:Rminus}
\begin{split}
\misura_d( T_t^{\Psi}(R) \cap Q) &= \int_{\T^d} (\one_{R} \circ T_{-t}^{\Psi}) \cdot \one_Q \diff \misura_d \geq \int_{\T^d} \one_{R_{-}(t)} \cdot (\one_Q \circ T_t^{\Psi} )\diff \misura_d\\
& = \int_W \Bigg( \sum_{i : \gamma_{\mathbf{r}_i(\mathbf{w})} \subset R_{-}(t)} \int_0^{\overline{s}}\one_Q \circ T_t^{\Psi} \circ \gamma_{\mathbf{r}_i(\mathbf{w})} (s)\diff s \Bigg) \omega(\mathbf{w}).
\end{split}
\end{equation}
For each curve $\gamma_{\mathbf{r}_i(\mathbf{w})} \subset R_{-}(t)$, by definition of $R_{-}(t)$, there exists a point $\gamma_{\mathbf{r}_i(\mathbf{w})}(s)$ contained in $S$, so that $\pi(\Gamma_{\mathbf{r}_i(\mathbf{w})}(s)) \in \widehat{Q}$. Hence, the point $\Gamma_{\mathbf{r}_i(\mathbf{w})}(s) \in Q$ if and only if its $x_d$-coordinate $\Gamma_{\mathbf{r}_i(\mathbf{w})}(s) \cdot \mathbf{e}_d$ is in $[w_d,w_d\rq{}]$. Since the speed of $\Gamma_{\mathbf{r}_i(\mathbf{w})}$ in this latter direction is constant and equal to $\overline{s}^{-1} = n_t(\mathbf{r}_i(\mathbf{w}))a$, we get
\begin{equation}\label{eq:Q}
\int_0^{\overline{s}}\one_Q \circ T_t^{\Psi} \circ \gamma_{\mathbf{r}_i(\mathbf{w})}(s)\diff s = \overline{s} \modulo{w_d\rq{} - w_d}.
\end{equation}
Combining \eqref{eq:Q} with \eqref{eq:Rminus} and \eqref{eq:RandS}, we obtain
\begin{equation*}
\begin{split}
\misura_d( T_t^{\Psi}(R) \cap Q) & \geq \misura_d(R_{-}(t)) \modulo{w_d\rq{} - w_d} \geq \misura_d(S) \modulo{w_d\rq{} - w_d} \\
&= \misura_{d-1} \left( \widehat{R}_{-\varepsilon_0} \cap \widehat{T}_{-t}^{\psi}(\widehat{Q}_{-\varepsilon_0}) \right) \modulo{v_d\rq{} - v_d} \modulo{w_d\rq{} - w_d}.
\end{split}
\end{equation*}

The area of a face of $Q$ is less than $C = \max \Psi > 1$, thus we can bound $\misura_{d-1} (\widehat{Q}_{-\varepsilon_0}) \geq \misura_{d-1} (\widehat{Q}) - (3d) C \varepsilon_0$, where $3d$ is the number of faces of $Q$. 
Using \eqref{eq:conditions}-(iii), we get
\begin{equation*}
\begin{split}
 \misura_d & (T_t^{\Psi}(R) \cap Q) \geq ( \misura_{d-1}(\widehat{R}_{-\varepsilon_0}) \misura_{d-1} (\widehat{Q}_{-\varepsilon_0}) - \varepsilon_0) \modulo{v_d\rq{} - v_d} \modulo{w_d'-w_d}\\
 & \geq \big( ( \misura_{d-1}(\widehat{R})- 3d C \varepsilon_0 )(\misura_{d-1}(\widehat{Q}) - 3d C \varepsilon_0)-\varepsilon_0 \big) \modulo{v_d'-v_d}  \modulo{w_d'-w_d} \\
& \geq \misura_d(R) \misura_d(Q) -3 d C (\misura_d(R)+\misura_d(Q))\varepsilon_0 - \varepsilon_0 \geq \misura_d(R) \misura_d(Q) -\varepsilon,
\end{split}
\end{equation*}
by the choice of $\varepsilon$. 
The other inequality can be derived in a similar way:
one considers $R_{+}(t)$ instead of $R_{-}(t)$, where $R_{+}(t)$ is defined analogously to $R_{-}(t)$ as the union of the segments $\gamma_{\mathbf{r}}(s)$ which contain at least one point that belongs to $S\rq{} = R \cap\pi^{-1}( \widehat{T}_{-t}^{\psi}  (\widehat{Q}) )$; then, one notices that
$$
R_{+}(t) \subset R_{+ \varepsilon} \cap \pi^{-1}\left( \widehat{T}_{-t}^{\psi}  (\widehat{Q}_{+\varepsilon} ) \right), 
$$
where 
$$
\widehat{Q}_{+\varepsilon} = \prod_{j=1}^{d-1}[w_j-\varepsilon,w_j' + \varepsilon] \times [q_1 - \varepsilon, q_2 + \varepsilon] \supset \widehat{Q},
$$
and similarly for $R_{+\varepsilon} $. Finally, it is sufficient to estimate 
$
\misura_d\big(  R\cap  T_{-t}^{\Psi}(Q) \big) = \misura_d \big( S\rq{} \cap  T_{-t}^{\Psi}(Q) \big) \leq \misura_d \big( R_{+}(t) \cap  T_{-t}^{\Psi}( Q_{+\varepsilon})  \big)
$
by applying Fubini\rq{}s Theorem as above.
The proof is therefore complete.


\section{Proof of Theorem \ref{th:stretchingroof}}\label{ST1}

We now suppose that $E_k = \text{Im} (A-\Id)^k = \langle \mathbf{e}_d \rangle$ and $\text{Im} (A - \Id)^{k+1} = \{0 \}$.
Let $\Psi^{\perp} \in \mathscr{Q}(d)$ and, denoting $e(x) = \exp(2 \pi i x)$, write
\begin{equation}\label{eq:Psifinal}
\Psi^{\perp}(\mathbf{x}) = \sum_{\mathbf{l} \in [-m,m]^d\cap \Z^d} c_{\mathbf{l}}e(\mathbf{l} \cdot \mathbf{x}).
\end{equation}
Let us assume that $\Psi^{\perp} \in \mathscr{Q}(d)$ is not a measurable coboundary for $T$; we prove that Birkhoff sums $S_n(\Psi^{\perp})$ of $\Psi^{\perp}$ grow in measure. In order to do this, we first apply a classical Gottschalk-Hedlund argument to prove that they grow in average (Lemma \ref{th:cesaro}) and then a decoupling result (Lemma \ref{th:decoupling}), which generalizes \cite[Lemma 5]{avila:heisenberg} to higher dimension. The key observation is that, due to the form of the skew-translation $T$, for large $N \geq 1$ the divergence of nearby points happens mostly in the $x_d$-direction, namely it is of higher order than in the other coordinates.

Denote by $\widehat{\mathbf{x}} := \pi(\mathbf{x}) \in \T^{d-1}$ the projection of $\mathbf{x} \in \T^d$ onto the first $d-1$ coordinates; the projection $\pi$ gives a factor $(\T^{d-1},\widehat{T})$ of $(\T^d, T)$.

\begin{remark}\label{rk:one}
For any $N \geq 1$, we can express the $N$-th iterate of $T$ as $T^{N}\mathbf{x} = \mathbf{x} A^{N} + \mathbf{b}({N})$, where $\mathbf{b}({N}) =(b_1({N}), \dots, b_d({N}))= \sum_{i=0}^{{N}-1} \mathbf{b}A^i$ and $A^{N}=(a_{i,j}({N}))_{i,j}$ is an upper triangular unipotent matrix. For any ${N} \geq k+1$, we can write $A^{N} = (\Id + (A-\Id))^{N} = \sum_{i=0}^{k} {{N}\choose{i}} (A-\Id)^i$. It follows that each nonzero entry $a_{i,j}({N})$ is a polynomial in ${N}$ of degree $\leq k$. Moreover, since $E_k = \langle \mathbf{e}_d \rangle$, the only terms $a_{i,j}({N})$ of order $O(N^k)$ are in the last column, namely for $j=d$.
With this notation, we have
$$
T^{N}\mathbf{x} = T^{N} (\widehat{\mathbf{x}},x_d) = \big(\widehat{T}^{N}\widehat{\mathbf{x}},\ x_d + x_{d-1} a_{d-1,d}(N) + \cdots + x_1 a_{1,d}(N) + b_d({N}) \big).
$$ 
\end{remark}

\begin{lemma}[{\cite[Corollary 1]{avila:heisenberg}}]\label{th:cesaro}
For any $C>1$ we have
\begin{equation}\label{eq:Cesaroav}
\lim_{N \to \infty} \frac{1}{N} \sum_{n=0}^{N-1} \misura_d \left(\modulo{S_n(\Psi^{\perp})}<C\right) = 0.
\end{equation}
In particular, for any $\varepsilon >0$ there exist arbitrarily long arithmetic progressions $\{i\overline{n}\}_{i=1}^{\ell}$ such that $\misura_d(\modulo{S_{i\overline{n}}(\Psi^{\perp})}<C)<\varepsilon$.
\end{lemma}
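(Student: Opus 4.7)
The plan is a Gottschalk--Hedlund-type argument by contradiction: if the Cesaro average did not tend to zero, we will construct a measurable function $u\colon\T^d\to\R$ satisfying $u\circ T-u=\Psi^\perp$ almost everywhere, contradicting the hypothesis that $\Psi^\perp$ is not a measurable coboundary for $T$.

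Consider the skew product $T_{\Psi^\perp}\colon\T^d\times\R\to\T^d\times\R$, $(\mathbf{x},t)\mapsto(T\mathbf{x},t+\Psi^\perp(\mathbf{x}))$, whose iterates satisfy $T_{\Psi^\perp}^{n}(\mathbf{x},0)=(T^n\mathbf{x},S_n(\Psi^\perp)(\mathbf{x}))$. Set $\mu_n:=\bigl(T^n(\cdot),S_n(\Psi^\perp)(\cdot)\bigr)_*\misura_d$, so that $(T_{\Psi^\perp})_*\mu_n=\mu_{n+1}$, and define the Cesaro averages $\rho_N:=\frac{1}{N}\sum_{n=0}^{N-1}\mu_n\big|_{\T^d\times[-C,C]}$. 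If the conclusion failed, along some subsequence $N_j\to\infty$ the total mass of $\rho_{N_j}$ would stay $\geq\delta>0$; compactness of $\T^d\times[-C,C]$ then provides a weak-$*$ limit $\rho$ with mass $\geq\delta$. Since $\|\Psi^\perp\|_\infty<\infty$, for every continuous test function $f$ supported in $\T^d\times(-C+\|\Psi^\perp\|_\infty,\,C-\|\Psi^\perp\|_\infty)$ both $f$ and $f\circ T_{\Psi^\perp}$ are supported in $\T^d\times[-C,C]$; the telescoping identity $\int(f\circ T_{\Psi^\perp}-f)\,d\rho_N=\frac{1}{N}\bigl(\int f\,d\mu_N-\int f\,d\mu_0\bigr)$ then yields $T_{\Psi^\perp}$-invariance of $\rho$ upon letting $N\to\infty$, and enlarging $C$ extends invariance to all compactly supported test functions.

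Since $\pi T_{\Psi^\perp}=T\pi$, the projection $\pi_*\rho$ is $T$-invariant and dominated by $\misura_d$; unique ergodicity of $T$ forces $\pi_*\rho=c\,\misura_d$ with $c\geq\delta$. Disintegrating $\rho/c=\int_{\T^d}\nu_\mathbf{x}\,d\misura_d(\mathbf{x})$ into probability measures $\nu_\mathbf{x}$ supported in $[-C,C]$, the invariance translates into $\nu_{T\mathbf{x}}=(\tau_{\Psi^\perp(\mathbf{x})})_*\nu_\mathbf{x}$ for $\misura_d$-a.e.~$\mathbf{x}$, where $\tau_s$ denotes translation by $s$. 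Taking barycenters $u(\mathbf{x}):=\int t\,d\nu_\mathbf{x}(t)$ then produces a bounded measurable function satisfying $u(T\mathbf{x})=u(\mathbf{x})+\Psi^\perp(\mathbf{x})$ almost everywhere, the desired contradiction.

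The ``in particular'' clause follows at once from the first part: the set $B_\varepsilon:=\{n:\misura_d(|S_n(\Psi^\perp)|<C)\geq\varepsilon\}$ has zero upper density by Chebyshev applied to \eqref{eq:Cesaroav}. For each $j\geq1$ the set $j^{-1}B_\varepsilon$ also has zero upper density, hence so does the finite union $\bigcup_{j=1}^{\ell}j^{-1}B_\varepsilon$; any $\overline{n}$ in the (infinite) complement satisfies $\misura_d(|S_{i\overline{n}}(\Psi^\perp)|<C)<\varepsilon$ simultaneously for $i=1,\dots,\ell$. The main technical hurdle is reconciling the vertical cut-off $[-C,C]$ with $T_{\Psi^\perp}$-invariance of $\rho$ and ensuring the disintegration/barycenter are measurable; boundedness of $\Psi^\perp$ is essential, as it guarantees that each application of $T_{\Psi^\perp}$ shifts the vertical coordinate by at most $\|\Psi^\perp\|_\infty$, so working with test functions supported strictly inside the slab is enough.
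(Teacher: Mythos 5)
Your proof of the ``in particular'' clause is essentially the paper's own argument: Chebyshev gives zero density of $B_\varepsilon$, and zero density is stable under the dilations $B_\varepsilon\mapsto j^{-1}B_\varepsilon$ and under finite unions, so the complement of $\bigcup_{j\le\ell}j^{-1}B_\varepsilon$ is infinite. (The paper writes out the counting estimate explicitly rather than invoking stability of density zero, but it is the same computation.) For \eqref{eq:Cesaroav} itself the paper simply cites \cite[Corollary 1]{avila:heisenberg}; your Gottschalk--Hedlund/Schmidt-type construction of an invariant measure for the skew product is indeed the standard route behind that reference, and the high-level plan is sound. There is, however, a genuine tension in the details between the two halves of your construction of $\rho$.

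On the one hand, you restrict each $\mu_n$ to the slab $\T^d\times[-C,C]$ \emph{before} taking the Ces\`aro average and weak-$\ast$ limit, precisely so that the conditionals $\nu_{\mathbf x}$ are probability measures supported in $[-C,C]$ and the barycenter $u(\mathbf x)=\int t\,d\nu_{\mathbf x}(t)$ exists. On the other hand, the telescoping argument then only gives $\int(f\circ T_{\Psi^\perp}-f)\,d\rho=0$ for $f$ supported in the \emph{strictly smaller} slab $\T^d\times\bigl(-C+\lVert\Psi^\perp\rVert_\infty,\,C-\lVert\Psi^\perp\rVert_\infty\bigr)$. This is not enough: disintegrating, it only tells you that $\nu_{T\mathbf x}$ and $(\tau_{\Psi^\perp(\mathbf x)})_\ast\nu_{\mathbf x}$ agree when tested against functions vanishing near $\pm C$, so the two measures may differ near the boundary and the barycenters of $\nu_{T\mathbf x}$ and of the translated $\nu_{\mathbf x}$ need not be equal. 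The remark ``enlarging $C$ extends invariance to all compactly supported test functions'' is where the fix should live, but as stated it is circular: once you enlarge $C$, you get a \emph{different} limit measure on a larger slab, and you are back to the same boundary problem. The standard repair is to take a vague (equivalently, a diagonal weak-$\ast$) limit of the \emph{unrestricted} averages $\tfrac1N\sum_{n<N}\mu_n$; this yields a genuinely $T_{\Psi^\perp}$-invariant sub-probability $\rho$ on $\T^d\times\R$, with $\pi_\ast\rho=c\,\misura_d$, $c\ge\delta$, exactly as you want. But then the fiber measures $\nu_{\mathbf x}$ are merely probability measures on $\R$ with no reason to have finite first moment, so the barycenter may fail to exist. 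You should therefore replace it by a quantile selection, e.g.\ $u(\mathbf x)=\inf\{t:\nu_{\mathbf x}((-\infty,t])\ge 1/2\}$, which is measurable, a.e.\ finite, and equivariant under translations, giving $u\circ T-u=\Psi^\perp$ and hence the contradiction. With these two corrections (unrestricted vague limit for invariance, median instead of barycenter for the selection) your first-part argument goes through.
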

\begin{proof}
The proof of the first statement is the same as in \cite[Corollary 1]{avila:heisenberg}. For the reader\rq{}s convenience, we present a proof of the second part.
Fix $\varepsilon >0$ and let 
$$
B_{\varepsilon} = \left\{ n \in \N:   \misura_d \left(\modulo{S_n(\Psi^{\perp})}<C\right) \geq \varepsilon \right\} \subset \N.
$$
By \eqref{eq:Cesaroav}, $B_{\varepsilon}$ has zero density, see, e.g., \cite[Theorem 2.8.1]{choudary:real}. 

Let us consider $\ell \geq 1$, $0< \delta < 2/(\ell^2 + \ell)$ and $N_0 \geq 1$ such that for all $N \geq N_0$ we have $\# \{n \in B_{\varepsilon}: n \leq N \} \leq \delta N$. Fix $N \geq N_0$; we want to find $\overline{n} \leq N$ such that $\overline{n}, 2\overline{n}, \dots, \ell \overline{n} \in \N \setminus B_{\varepsilon}$. Equivalently, if we denote by $ B_{\varepsilon} / j := \{ b/j : b \in B_{\varepsilon} \} \subset \mathbb{Q} $, we look for $1 \leq \overline{n} \leq N$ such that 
$$
\overline{n} \notin \{ 1, \dots, N \} \cap  \frac{B_{\varepsilon}}{j} \text{\ \ \ for all\ } j = 1, \dots, \ell.
$$
We estimate the cardinality
\begin{equation*}
\begin{split}
& \# \Bigg(\{ 1,\dots, N \} \setminus \bigcup_{j=1}^{\ell} \{ 1, \dots, N \} \cap  \frac{B_{\varepsilon}}{j} \Bigg) \geq N - \sum_{j=1}^{\ell} \# \Bigg( \{ 1, \dots, N \} \cap  \frac{B_{\varepsilon}}{j} \Bigg)\\
& \qquad \geq N - \sum_{j=1}^{\ell} \# \Big( \{ 1, \dots, jN \} \cap B_{\varepsilon} \Big) \geq N \left( 1- \frac{\ell (\ell+1)}{2} \delta \right) >0,
\end{split}
\end{equation*}
by the choice of $\delta$. In particular, the set $\{ 1 \leq \overline{n} \leq N : j \overline{n} \notin B_{\varepsilon},\text{\ for\ } j=1,\dots, \ell \}$ is not empty and the claim follows.
\end{proof}


\subsection{Decoupling}
The following is our decoupling result.

\begin{lemma}\label{th:decoupling}
Let $C>1$ and $\varepsilon >0$. There exist $C\rq{}>1 $ and $\varepsilon\rq{}>0$ such that for all $n \geq 1$ satisfying $\misura_d(\modulo{S_n(\Psi^{\perp})}<C\rq{}) < \varepsilon\rq{}$ there exists $N_0 \geq 1$ such that for all $N \geq N_0$ we have
\begin{equation}\label{eq:concl}
\misura_d \left( \modulo{S_N(\Psi^{\perp}) \circ T^n - S_N(\Psi^{\perp})}<2C \right) < \varepsilon.
\end{equation} 
\end{lemma}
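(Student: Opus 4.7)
The plan is to begin with the standard cocycle identity
\[
S_N(\Psi^\perp)\circ T^n - S_N(\Psi^\perp) = S_n(\Psi^\perp)\circ T^N - S_n(\Psi^\perp),
\]
which lets me fix $\Phi:=S_n(\Psi^\perp)$ and instead estimate $\misura_d(|\Phi\circ T^N-\Phi|<2C)$ as $N\to\infty$. Because $\Psi^\perp\in\mathscr Q(d)$ and $A^T$ fixes the last coordinate (Remark \ref{rk:one}), every non-zero Fourier mode of $\Phi$ satisfies $l_d\neq 0$; in particular $\int_{0}^{1}\Phi\,dx_d\equiv 0$, and since $\int_{\T^d}\Psi^\perp=0$ the hypothesis $\misura_d(|\Phi|<C')<\varepsilon'$ forces $\Phi$ to be large and of both signs on a set of measure close to $1$.

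The central idea is an asymptotic \emph{decoupling}: for every bounded continuous $\chi\colon\mathbb R^2\to\mathbb R$,
\[
\lim_{N\to\infty}\int_{\T^d}\chi(\Phi(\mathbf x),\Phi(T^N\mathbf x))\,d\mathbf x = \int_{\mathbb R^2}\chi\,d(\mu\otimes\mu),
\]
where $\mu:=\Phi_\ast\misura_d$. For trigonometric $\chi$ this reduces to showing that the cross terms $\int_{\T^d}e\bigl((\mathbf l+\mathbf l'(A^T)^N)\cdot\mathbf x\bigr)\,d\mathbf x$ with $l_d,l_d'\neq 0$ vanish for all $N$ large. These integrals are non-zero only if $\mathbf l+\mathbf l'(A^T)^N=0$; by Remark \ref{rk:one}, some component $a_{i,d}(N)$ is a polynomial of degree $k$ in $N$, so $\|\mathbf l'(A^T)^N\|_\infty\to\infty$ polynomially in $N$ whenever $l_d'\neq 0$. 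Since the spectrum of $\Phi$ is a fixed finite set, the equation has no solutions once $N\geq N_0$ and the cross terms disappear.

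The second main step, which I view as the \emph{geometric localization} announced in the outline, is to upgrade this weak convergence to a measure bound against the indicator of the strip $\{|u-v|<2C\}$. I would partition $\T^{d-1}$ into boxes small enough that $\widehat T^N$ is a single translation and the shift $\delta_N(\widehat{\mathbf x}):=x_{d-1}a_{d-1,d}(N)+\dots+x_1a_{1,d}(N)+b_d(N)$ varies by less than any prescribed tolerance; on each box the estimate becomes a one-dimensional problem on the $x_d$-fiber, where the zero-mean trigonometric polynomial $\Phi(\widehat{\mathbf x},\cdot)$ and its shifted analogue $\Phi(\widehat T^N\widehat{\mathbf x},\cdot+\delta_N)$ give a direct bound on $\misura_1\{x_d:|\Phi(T^N\mathbf x)-\Phi(\mathbf x)|<2C\}$. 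Summing over boxes and passing to the limit yields
\[
\limsup_{N\to\infty}\misura_d(|\Phi\circ T^N-\Phi|<2C)\leq(\mu\otimes\mu)(|u-v|<2C).
\]
Using the hypothesis to concentrate $\mu$ outside $[-C',C']$ up to mass $\varepsilon'$, together with the fact that $\Phi$ has zero mean (so half of the mass sits above $+C'$ and half below $-C'$), one sees that by taking $C'$ large and $\varepsilon'$ small — both depending only on $C$ and $\varepsilon$ — the right-hand side is less than $\varepsilon$.

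The main obstacle I anticipate is precisely the geometric localization step. The cocycle reduction and the Fourier-orthogonality argument for smooth observables are essentially routine, but converting this weak decoupling into a measure estimate against a discontinuous indicator, while controlling both the $\widehat{\mathbf x}$-dependence of $\delta_N$ and the uniformity of $N_0$ in $n$, is where the genuinely new higher-dimensional input lies compared with the 2-dimensional argument of Avila, Forni and Ulcigrai.
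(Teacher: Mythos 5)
The cocycle identity reducing the claim to $\misura_d(|S_n(\Psi^\perp)\circ T^N-S_n(\Psi^\perp)|<2C)$ matches the paper, but the central ``asymptotic decoupling'' step is false, and this breaks your argument. It is not true that $(\Phi,\Phi\circ T^N)$ converges in distribution to $\mu\otimes\mu$ when $\Phi=S_n(\Psi^\perp)$. When you expand a moment $E[\Phi^a(\Phi\circ T^N)^b]$ in Fourier series, the frequencies that arise are \emph{sums} $\mathbf L=\mathbf l_1+\dots+\mathbf l_a$, $\mathbf L'=\mathbf l'_1+\dots+\mathbf l'_b$ of spectral frequencies of $\Phi$; although each $\mathbf l_i$ has nonzero $d$-th component, the sums can have $L_d=L'_d=0$, and for those the last coordinate of $\mathbf L'(A^T)^N$ is identically zero, so $\mathbf L+\mathbf L'(A^T)^N$ need not escape to infinity. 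Concretely, take $d=2$, $T(x,y)=(x+\alpha,x+y+\beta)$, $\Psi^\perp=2\cos 2\pi y$ and $\Phi=S_2(\Psi^\perp)$. Then $\widehat{\Phi^2}(\pm1,0)=2e(\pm\beta)\neq 0$, these frequencies are fixed by $(A^T)^N$, and one computes $E[\Phi^2(\Phi\circ T^N)^2]-E[\Phi^2]^2=8\cos(2\pi N\alpha)$ for all $N$ large, which oscillates and does not tend to zero. Even if the weak decoupling did hold, the final portmanteau bound $(\mu\otimes\mu)(|u-v|\le 2C)$ cannot be controlled by $\mu([-C',C'])<\varepsilon'$ and zero mean alone: the hypothesis is compatible with $\mu$ concentrating near two far-apart points, in which case that product mass is of order $1/2$ no matter how large $C'$ is.

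The paper's argument is instead fiber-wise in the $x_d$-variable and never passes through a limiting joint distribution. The structural fact that makes this possible is that $S_n(\Psi^\perp)$ has degree at most $m=\deg\Psi^\perp$ as a trigonometric polynomial in $x_d$ \emph{for every $n$}, because each $T^r$ acts on the $x_d$-fiber as a translation by $\widehat{\mathbf x}\cdot\widehat{\mathbf a}_d(r)+b_d(r)$. Writing $(S_n(\Psi^\perp)\circ T^N-S_n(\Psi^\perp))(\mathbf x)=\sum_{0<|l|\le m}c_{l,n,N}(\widehat{\mathbf x})e(lx_d)$, the proof shows that $|c_{l,n,N}(\widehat{\mathbf x})|\gtrsim|c_{l,n}(\widehat{\mathbf x})|$ outside a small set of $\widehat{\mathbf x}$, using exactly the localization you describe — cubes of side $O((N^{k-1}\log N)^{-1})$ on which $c_{l,n}$ and $c_{l,n}\circ\widehat T^N$ are nearly constant while the phase $e(l\widehat{\mathbf x}\cdot\widehat{\mathbf a}_d(N))$ sweeps the circle rapidly. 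The fiber-wise anti-concentration Lemma~\ref{lemmaavil} then turns coefficient size into a measure bound along each $x_d$-circle, and the hypothesis on $\misura_d(|S_n(\Psi^\perp)|<C')$ enters directly through $|S_n(\Psi^\perp)(\mathbf x)|=|\sum_l c_{l,n}(\widehat{\mathbf x})e(lx_d)|$. Your ``geometric localization'' instinct is the right idea, but it is the entire engine of the proof — paired with the bounded-degree observation and Lemma~\ref{lemmaavil} — rather than a device to upgrade a weak-convergence statement that is in fact false.
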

\begin{proof}
First of all, by the cocycle relation for Birkhoff sums, we notice that $S_N(\Psi^{\perp}) \circ T^n- S_N(\Psi^{\perp}) = S_{N+n}(\Psi^{\perp}) - S_n(\Psi^{\perp}) - S_N(\Psi^{\perp}) = S_n(\Psi^{\perp}) \circ T^N- S_n(\Psi^{\perp})$. We want to compare $\modulo{S_n(\Psi^{\perp}) \circ T^N - S_n(\Psi^{\perp}) }$ with $\modulo{S_n(\Psi^{\perp})}$, which, by hypothesis, is larger than $C\rq{}$ up to a set of measure at most $\varepsilon\rq{}$, the latter constants still to be determined.

We denote by $\mathbf{a}_j({N})$ the transpose of the $j$-th column of $A^N$ and the translation vector by $\mathbf{b}(N) = (b_1(N), \dots, b_{d}(N))$. Let $\widehat{\mathbf{a}}_{d}(N) = \pi (\mathbf{a}_d(N)) = (a_{1,d}(N), \dots, a_{d-1,d}(N))$ be the vector obtained from $\mathbf{a}_{d}(N)$ by suppressing the last coordinate $a_{d,d}(N)=1$. 

From \eqref{eq:Psifinal}, write
$$
\Psi^{\perp}(\mathbf{x}) = \sum_{0 < \modulo{l} \leq m} c_{l}(\widehat{\mathbf{x}})e(l x_d).
$$
Using Remark \ref{rk:one}, we can express the Birkhoff sum of $\Psi^{\perp}$ as
\begin{equation}\label{eq:psienne}
S_n(\Psi^{\perp})(\mathbf{x}) = \sum_{r=0}^{n-1}\ \sum_{0 < \modulo{l} \leq m} c_l(\widehat{T}^r\widehat{\mathbf{x}})   e \big(l (x_d+\widehat{\mathbf{x}} \cdot \widehat{\mathbf{a}}_{d}(r) + b_{d}(r)) \big) =  \sum_{0 < \modulo{l} \leq m} c_{l,n}(\widehat{\mathbf{x}}) e(lx_d),
\end{equation}
where we have denoted
$$
c_{l,n}(\widehat{\mathbf{x}}) = \sum_{r=0}^{n-1} c_l(\widehat{T}^r \widehat{\mathbf{x}}) e \big(l (\widehat{\mathbf{x}} \cdot \widehat{\mathbf{a}}_{d}(r) + b_{d}(r)) \big).
$$
Therefore, we can write
$$
(S_n(\Psi^{\perp}) \circ T^N- S_n(\Psi^{\perp}))(\mathbf{x}) = \sum_{0 < \modulo{l} \leq m} c_{l,n,N}(\widehat{\mathbf{x}}) e(lx_d),
$$
where
\begin{equation}\label{eq:clnn}
c_{l,n,N}(\widehat{\mathbf{x}}) = c_{l,n}(\widehat{T}^N\widehat{\mathbf{x}}) e \big(l(\widehat{\mathbf{x}} \cdot \widehat{\mathbf{a}}_{d}(N) + b_{d}(N)) \big) - c_{l,n}(\widehat{\mathbf{x}}).
\end{equation}
We will now estimate the measure of the set where the modulus of the coefficients $\clznn$ is comparable to $\clzn$. The idea is the following: we first partition $\T^{d-1}$ into sets on which the coefficients $\clzn$ and $\clzn \circ \widehat{T}^N$ are almost constant.
We then show that on a large set there are no cancellations for $\clznn$ by using the fact that the factor $e(l(\widehat{\mathbf{x}} \cdot \widehat{\mathbf{a}}_{d}(N)))$ is of higher order, namely $O(N^{k})$.

\medskip

Let $n \geq 1$ be fixed. The functions $c_{l,n}$ are uniformly continuous, hence let $\delta >0$ be such that if $\norma{\widehat{\mathbf{x}} - \widehat{\mathbf{x}}'} \leq \delta$ then $\modulo{ c_{l,n}(\widehat{\mathbf{x}}) - c_{l,n}(\widehat{\mathbf{x}}')} \leq 1/4$. By Remark \ref{rk:one}, 
$$
\norma{\widehat{T}^N \widehat{\mathbf{x}} - \widehat{T}^N \widehat{\mathbf{x}}'}_{\infty} \leq \norma{\widehat{\mathbf{x}} - \widehat{\mathbf{x}}'}_{\infty} \norma{\widehat{A}^N}_{\infty} = \norma{\widehat{\mathbf{x}} - \widehat{\mathbf{x}}'}_{\infty} O(N^{k-1}).
$$
Let $N_0 \geq 1$ be such that for all $N \geq N_0$, if  $\norma{\widehat{\mathbf{x}} - \widehat{\mathbf{x}}'}_{\infty} \leq (N^{k-1}\log N)^{-1}$ then the term above is less than $\delta$, so that 
\begin{equation}\label{eq:cubes}
\modulo{ \clzn(\widehat{T}^N\widehat{\mathbf{x}}) - \clzn(\widehat{T}^N\widehat{\mathbf{x}}')} \leq 1/4.
\end{equation}

Partition $\T^{d-1}$ into cubes with edges of length $L=( N^{k-1} \log N \sqrt{d-1})^{-1}$ and one face $F$ orthogonal to $\widehat{\mathbf{a}}_{d}(N)$. If $\widehat{\mathbf{x}}$ and $\widehat{\mathbf{x}}\rq{}$ are in one of such cubes, which we will denote by $Q$, then $\norma{\widehat{\mathbf{x}} - \widehat{\mathbf{x}}'}_{\infty} \leq \sqrt{d-1}L$ and so \eqref{eq:cubes} holds.
Fix $Q$ and let $\overline{\mathbf{x}}$ be one of its vertices. Let
$
c_1 = \clzn (\overline{\mathbf{x}}) $ and $
c_2 = \clzn(\widehat{T}^N \overline{\mathbf{x}}) e(l b_{d}(N))$; then for any $\widehat{\mathbf{x}} \in Q$, by \eqref{eq:clnn} and \eqref{eq:cubes},
\begin{equation*}
\begin{split}
\modulo{\clznn(\widehat{\mathbf{x}})} \geq & \modulo{\clzn(\widehat{T}^N \overline{\mathbf{x}}) e \big(l(\widehat{\mathbf{x}} \cdot \widehat{\mathbf{a}}_{d}(N) + b_{d}(N)) \big) - \clzn (\overline{\mathbf{x}})} \\
& - \modulo{ \clzn(\widehat{T}^N\widehat{\mathbf{x}}) - \clzn(\widehat{T}^N\overline{\mathbf{x}}) } \cdot \modulo{e \big(l(\widehat{\mathbf{x}} \cdot \widehat{\mathbf{a}}_{d}(N) + b_{d}(N)) \big)} - \modulo{ \clzn (\widehat{\mathbf{x}})  - \clzn (\overline{\mathbf{x}}) }\\
  \geq & \modulo{c_2 e(l \widehat{\mathbf{x}} \cdot \widehat{\mathbf{a}}_{d}(N))-c_1} -  \frac{1}{2}.
\end{split}
\end{equation*}
Call $\theta_1, \theta_2$ the argument of $c_1,c_2 \in \C$ respectively; fix $\theta \in (0, \frac{\pi}{2})$. If $r \in \R$ is such that $\theta_2 + 2 \pi r \notin [\theta_1-\theta, \theta_1+ \theta] + 2 \pi \Z$, then $\modulo{c_2e(r)-c_1} > \modulo{c_1}\sin \theta$, see Figure \ref{fig:1}.

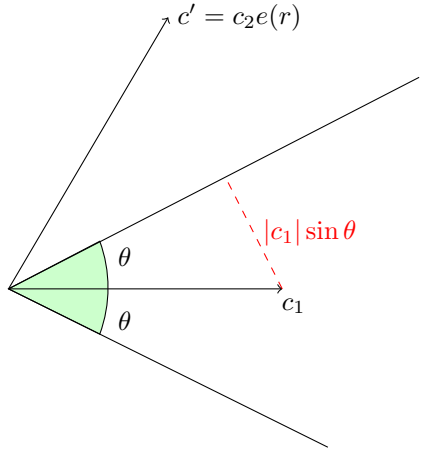
\begin{figure}[h!]
\centering
\begin{tikzpicture}[scale=3]
\clip (-0.2,-0.8) rectangle (2.5,1.3);
\draw [->] (0,0) -- (0.7,1.2) node [anchor=west] {$c\rq{} = c_2 e(r)$};
\filldraw[fill=green!20!white] (0,0) -- (0.4,-0.2) arc (-20:20:6mm)  -- cycle;
\draw [->] (0,0) -- (1.2,0);
\draw (1.25,0) node [anchor=north] {$c_1$};
\draw (0,0) -- (1.4,-0.7);
\draw (0,0) -- (1.8,0.935);
\draw (0.44,0.14) node [anchor=west] {$\theta$};
\draw (0.44,-0.14) node [anchor=west] {$\theta$};
\draw[dashed, red] (1.2,0) -- (0.95,0.495) node [pos=0.5,right] {\textcolor{red}{$\modulo{c_1} \sin \theta$}};
\end{tikzpicture}
\caption{ Any point $c\rq{} \in \C$ outside the cone of 1/2-angle $\theta$ about the line $\R c_1$ has distance from $c_1$ larger than the distance of $c_1$ from the boundary of the cone.} 
\label{fig:1}
\end{figure}

Thus, in our case, $\modulo{c_2 e(l\widehat{\mathbf{x}} \cdot \widehat{\mathbf{a}}_{d}(N))  - c_1} \leq \modulo{c_1}\sin \theta$ implies $\theta_2 + (2 \pi l) \widehat{\mathbf{x}} \cdot \widehat{\mathbf{a}}_{d}(N) \in [\theta_1-\theta, \theta_1+\theta] + 2 \pi \Z$; in particular, $l \widehat{\mathbf{x}} \cdot \widehat{\mathbf{a}}_{d}(N) $ belongs to an interval mod $\Z$ of size $\theta / \pi$. The level sets of the linear functional $\widehat{\mathbf{x}} \mapsto (2 \pi l) \widehat{\mathbf{x}} \cdot \widehat{\mathbf{a}}_{d}(N)$ are affine $(d-2)$-dimensional sets orthogonal to $\widehat{\mathbf{a}}_{d}(N)$ and hence parallel to a face $F$ of $Q$, see Figure \ref{fig:2}. 
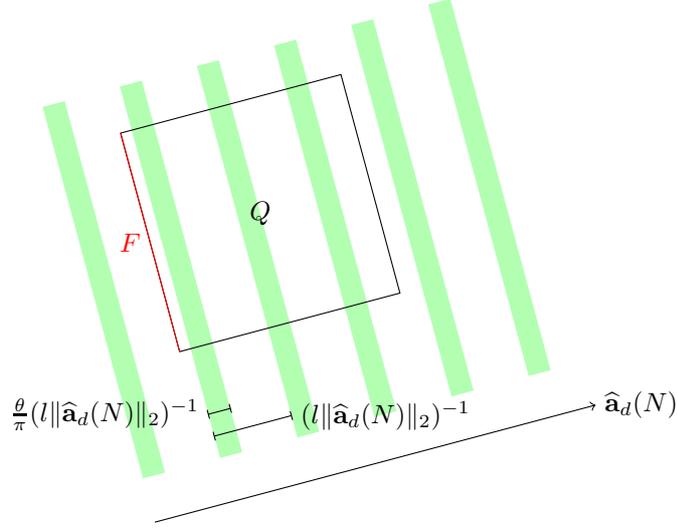
\begin{figure}[h!]
\centering
\begin{tikzpicture}[scale=3]
\fill[green!30!white] [rotate=15] (0,-0.2) rectangle (0.1,1.5);
\fill[green!30!white] [rotate=15] (0.35,-0.2) rectangle (0.45,1.5);
\fill[green!30!white] [rotate=15] (0.7,-0.2) rectangle (0.8,1.5);
\fill[green!30!white] [rotate=15] (1.05,-0.2) rectangle (1.15,1.5);
\fill[green!30!white] [rotate=15] (1.4,-0.2) rectangle (1.5,1.5);
\fill[green!30!white] [rotate=15] (1.75,-0.2) rectangle (1.85,1.5);
\draw [rotate=15] (0.3,0.3) rectangle (1.3,1.3);
\draw[white] [rotate=15] (0.8,0.8) -- (0.8,0.8) node {\textcolor{black}{$Q$}};
\draw[red] [rotate=15] (0.3,0.3) -- (0.3,1.3) node[pos= 0.5, left] {$F$};
\draw [rotate=15] [->] (0,-0.4) -- (2,-0.4) node[anchor=west] {$\widehat{\mathbf{a}}_{d}(N)$};
\draw [rotate=15] (0.35,0) -- (0.45,0) node[pos= 0, left] {$\frac{\theta}{\pi} (l \norma{\widehat{\mathbf{a}}_{d}(N)}_2)^{-1}$};
\draw [rotate=15] (0.45,-0.02) -- (0.45,0.02);
\draw [rotate=15] (0.35,-0.02) -- (0.35,0.02);
\draw [rotate=15] (0.35,-0.1) -- (0.7,-0.1) node[pos= 1, right] {$ (l \norma{\widehat{\mathbf{a}}_{d}(N)}_2)^{-1}$};
\draw [rotate=15] (0.7,-0.12) -- (0.7,-0.08);
\draw [rotate=15] (0.35,-0.12) -- (0.35,-0.08);
\end{tikzpicture}
\caption{In color, the set of $\mathbf{x}$ such that $\theta_2 + 2 \pi l \widehat{\mathbf{x}} \cdot \widehat{\mathbf{a}}_{d}\in [\theta_1-\theta, \theta_1+\theta] + 2 \pi \Z$.}
\label{fig:2}
\end{figure}

Therefore,
\begin{multline*}
\misura \Big(\widehat{\mathbf{x}}\in Q : \theta_2 + (2 \pi l) \widehat{\mathbf{x}} \cdot \widehat{\mathbf{a}}_{d}(N) \in [\theta_1-\theta, \theta_1+\theta] + 2 \pi \Z \Big) \leq \misura(F) \frac{\theta}{\pi} \left( L + \frac{1}{l  \norma{ \widehat{\mathbf{a}}_{d}(N) }_2} \right).
\end{multline*}
By Remark \ref{rk:one}, $\norma{ \widehat{\mathbf{a}}_{d}(N)}_2= O(N^k)$; since $L=O(1/(N^{k-1}\log N))$, we get
\begin{equation*}
\begin{split}
 \misura \left(\widehat{\mathbf{x}}\in Q : \modulo{c_2 e(l \widehat{\mathbf{x}} \cdot \widehat{\mathbf{a}}_{d}(N))  - c_1} \leq \modulo{c_1}\sin \theta \right) &
\leq \frac{\theta}{\pi} \misura (Q)  \left(1 + \frac{1}{l  \norma{ \widehat{\mathbf{a}}_{d}(N) }_2 L } \right) \\
& = \frac{\theta}{\pi} \misura (Q)  \left(1 + O\left( \frac{\log N }{N } \right) \right).
\end{split}
\end{equation*}
On the complement of this set, 
\begin{equation*}
\begin{split}
\modulo{\clznn(\widehat{\mathbf{x}})} &\geq  \modulo{c_2 e(l \widehat{\mathbf{x}} \cdot \widehat{\mathbf{a}}_{d}(N) )  - c_1}-\frac{1}{2} > \modulo{c_1}\sin \theta-\frac{1}{2} \geq \modulo{\clzn(\widehat{\mathbf{x}})}\sin \theta-\frac{3}{4};
\end{split}
\end{equation*}
hence
\begin{equation*}
\begin{split}
\limsup_{N \to \infty} \misura \left( \modulo{\clznn(\widehat{\mathbf{x}})}  \leq \modulo{\clzn(\widehat{\mathbf{x}})}\sin \theta-\frac{3}{4} \right) & \leq  \limsup_{N \to \infty} \sum_{Q \subset \Sigma} \left(1 + O\left( \frac{\log N}{N} \right)\right) \frac{\theta}{\pi} \misura(Q)  \\
 & = \limsup_{N \to \infty} \frac{\theta}{\pi} \left(1 + O\left( \frac{\log N}{N} \right)\right) = \frac{\theta}{\pi}.
\end{split}
\end{equation*}
We have obtained an estimate of the measure of the set where the coefficients $\clznn$ are small compared to $\clzn$; outside this set we can estimate $\modulo{S_n(\Psi^{\perp}) \circ T^N- S_n(\Psi^{\perp})}$ thanks to the hypothesis on $\modulo{S_n(\Psi^{\perp})}$ as follows.

\medskip

Let us add all these estimates as $0 < \modulo{l} \leq m$, where we recall $m$ is the degree of the trigonometric polynomial $\Psi^{\perp}$. Choose $C\rq{} \geq 9m^2$; pick $\theta \in (0, \frac{\pi}{2})$ such that $1/{\sqrt{C\rq{}}} \leq \sin \theta \leq {\sqrt{2/  C\rq{}}}$. Clearly, ${\theta}/{\pi} < \sin (\theta/2) = \sqrt{(1- \cos \theta) / 2} \leq (\sin \theta) / \sqrt{2} \leq1/{\sqrt{C\rq{}}}$. Outside a set of measure at most $2m({\theta} / {\pi}) \leq {2m}/{\sqrt{C\rq{}}}$, we have
\begin{equation*}
\begin{split}
\sum_{0 < \modulo{l} \leq m} \modulo{\clznn(\widehat{\mathbf{x}})} &\geq \sum_{0 < \modulo{l} \leq m} \modulo{\clzn(\widehat{\mathbf{x}})} \sin \theta - \frac{6m}{4} 
\geq \modulo{\sum_{0 < \modulo{l} \leq m}\clzn(\widehat{\mathbf{x}}) e(lx_d) }\frac{1}{\sqrt{C\rq{}}} - \frac{\sqrt{C\rq{}}}{2}.
\end{split}
\end{equation*}
We apply the following result.
\begin{lemma}[{\cite[Lemma 4]{avila:heisenberg}}]\label{lemmaavil}
For each $m \geq 1$ and for any norm $\norma{\cdot}_m$ on $\mathbb{C}^{2m}$, there exists constants $D_m$ and $d_m>0$ such that, if $\mathbf{c} = (c_{-m}, \dots, c_{-1},c_1, \dots, c_m)  \in \mathbb{C}^{2m}$ has unit norm $\norma{\mathbf{c} }_m = 1$, then for every $\delta >0$,
$$
\misura \left( \modulo{ \sum_{0 < \modulo{l} \leq m} c_l e(lx)} < \delta \right) < D_m\delta^{d_m}.
$$ 
\end{lemma}

Hence, in our case, there exist constants $D_m,d_m >0$ such that for every $\delta>0$ and for fixed $ \widehat{\mathbf{x}} \in \T^{d-1}$ the measure of the set of $x_d \in \mathbb{T}$ where $\modulo{(S_n(\Psi^{\perp}) \circ T^N-S_n(\Psi^{\perp}))(\widehat{\mathbf{x}},x_d)} < \delta \sum_{0 < \modulo{l} \leq m}  \modulo{\clznn(\widehat{\mathbf{x}})}$ is less than $D_m \delta^{d_m}$. 
By Fubini\rq{}s Theorem, choosing $\delta = {4C}/{\sqrt{C\rq{}}}$, outside a subset of $\T^d$ of measure less than $D_m \delta^{d_m}$ the following estimate holds:
\begin{equation*}
\modulo{(S_n(\Psi^{\perp}) \circ T^N- S_n(\Psi^{\perp}) )(\mathbf{x})} \geq \frac{4C}{\sqrt{C\rq{}}} \sum_{0 < \modulo{l} \leq m} \modulo{\clznn(\widehat{\mathbf{x}})}.
\end{equation*}
Thus, on a set of measure at least $ 1- {2m}/{\sqrt{C\rq{}}} - D_m ({4C}/{\sqrt{C\rq{}}} )^{d_m}$, we have
\begin{equation*}
\begin{split}
\modulo{(S_n(\Psi^{\perp}) \circ T^N- S_n(\Psi^{\perp}) )(\mathbf{x})} &\geq \frac{4C}{\sqrt{C\rq{}}} \Bigg( \modulo{\sum_{0 < \modulo{l} \leq m}\clzn(\widehat{\mathbf{x}}) e(lx_d) }\frac{1}{\sqrt{C\rq{}}} - \frac{\sqrt{C\rq{}}}{2} \Bigg)\\
&  = \frac{4C}{\sqrt{C\rq{}}}\left(\frac{\modulo{S_n(\Psi^{\perp})(\mathbf{x})}}{\sqrt{C\rq{}}} - \frac{\sqrt{C\rq{}}}{2} \right).
\end{split}
\end{equation*}
Let us enlarge $C\rq{}$ if necessary and choose $\varepsilon\rq{}>0$ such that 
$$
 \frac{2m}{\sqrt{C\rq{}}} + D_m \left( \frac{4C}{\sqrt{C\rq{}}} \right)^{d_m} + 2 \varepsilon\rq{} < \varepsilon.
$$
Let $n \geq 1$ such that $\modulo{S_n(\Psi^{\perp})} \geq C\rq{}$ up to a set of measure $\varepsilon\rq{}$, by Corollary \ref{th:cesaro}. Outside a set of measure less than $\varepsilon$, we conclude
$$
\modulo{(S_n(\Psi^{\perp}) \circ T^N- S_n(\Psi^{\perp}) )(\mathbf{x})} \geq 2C.
$$
\end{proof}


\subsection{Conclusion of the proof of Theorem \ref{th:stretchingroof}}

Lemma \ref{th:cesaro} implies that $\liminf_{n \to \infty} \misura_d (\modulo{S_n(\Psi^{\perp})}<C )=0$; let $L$ be the $\limsup$ and assume by contradiction that it is different from 0. Choose $\varepsilon >0$ and $\ell \geq 1$ such that 
$$
\frac{1}{\ell} + \frac{\ell+1}{2} \varepsilon < \frac{L}{2},
$$
and consider $C\rq{}>1$ and $\varepsilon\rq{}>0$ given by Lemma \ref{th:decoupling}. By Lemma \ref{th:cesaro}, there exists an arithmetic progression $\{ i\overline{n} \}_{i=1}^{\ell}$ of length $\ell$ such that $\misura_d(\modulo{S_{i\overline{n}}(\Psi^{\perp})} <C\rq{})< \varepsilon\rq{}$. 
By Lemma \ref{th:decoupling}, let $N_0(i) \geq 1$ be such that the conclusion \eqref{eq:concl} is satisfied with $n = i\overline{n}$; let $\overline{N_0}$ be the maximum of all $N_0(i)$  for $i=1, \dots, \ell$.
Choose $N  \geq \overline{N_0}$ such that $ \misura_d(\modulo{S_N(\Psi^{\perp})}<C) \geq \frac{L}{2}$. Since $T$ is measure-preserving, for $1 \leq j < i \leq \ell$ we get
\begin{equation*}
\begin{split}
&\misura_d(T^{-i\overline{n}}\{\modulo{S_N(\Psi^{\perp})}<C\} \cap T^{-j\overline{n}}\{\modulo{S_N(\Psi^{\perp})}<C\}) \\
& \qquad \qquad \leq \misura_d\left(\modulo{S_N(\Psi^{\perp}) \circ T^{i\overline{n}} - S_N(\Psi^{\perp}) \circ T^{j\overline{n}}}<2C\right) \\
& \qquad \qquad = \misura_d\left(\modulo{S_N(\Psi^{\perp}) \circ T^{(i-j)\overline{n}} - S_N(\Psi^{\perp}) }<2C\right),
\end{split}
\end{equation*}
which is less than $\varepsilon$ by Lemma \ref{th:decoupling}. Thus by the inclusion-exclusion principle, 
\begin{equation*}
\begin{split}
& \misura_d \left(\bigcup_{i=1}^{\ell} T^{-i\overline{n}} \left\{\modulo{S_n(\Psi^{\perp})}<C \right\}  \right) \geq \sum_{i=1}^{\ell} \misura_d \left(T^{-i\overline{n}}\left\{\modulo{S_n(\Psi^{\perp})}<C \right\}  \right)- \\
& \quad - \sum_{1 \leq j < i \leq \ell} \misura_d \left(T^{-i\overline{N}}\left\{\modulo{S_n(\Psi^{\perp})}<C \right\} \cap T^{-j\overline{n}}\left\{\modulo{S_N(\Psi^{\perp})}<C \right\} \right) \geq \ell \frac{L}{2} - \frac{\ell(\ell +1)}{2}\varepsilon.
\end{split}
\end{equation*}
This implies $L/2 \leq 1/\ell+\varepsilon(\ell +1)/2$, in contradiction with the initial choice of $\ell$ and $\varepsilon$. Thus $L=0$, which settles the proof.


\section{Proof of Theorem \ref{th:intro2}}\label{s:proof2}

In this section, we prove Theorem \ref{th:intro2} by reducing the problem to the setting of suspension flows over skew-translations as in Theorem \ref{th:intro1}. 
Recalling the definitions and notation of \S\ref{s:2.1}, let $F := F_d$ be a quasi-abelian filiform group, $M= \Lambda \backslash F$ a quasi-abelian filiform nilmanifold and $\{\varphi^{\mathbf{w}}_t\}_{t \in \R}$ a quasi-abelian filiform nilflow, where $\mathbf{w} = w_0 \mathbf{f}_0 + \cdots + w_d \mathbf{f}_d \in \mathfrak{f} = \mathfrak{f}_d$.

\subsection{Preliminaries}\label{s:2}
We first show that, although almost every quasi-abelian filiform nilflow is uniquely ergodic, they are not weak mixing; we remark that the same argument applies to general nilflows. 
Indeed, each nilflow has a factor which is isomorphic to a rotation on a torus and furthermore unique ergodicity of the latter is equivalent to unique ergodicity of the former, see, e.g., \cite[p.~344]{einsiedler:ergodic}.
\begin{lemma}
The abelianization $\abel \colon F \to F/F^{(2)}$ induces a factor of $(M, \{\varphi^{\mathbf{w}}_t\}_{t \in \R})$ which is isomorphic to a linear flow $\{\overline{\varphi}_t \}_{t \in \R}$ on $\T^2$.
\end{lemma}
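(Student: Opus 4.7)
The plan is to exhibit $\abel \colon F \to F/F^{(2)}$ as a quotient of Lie groups, check that it descends to a map between $M$ and a 2-torus, and verify that the induced flow is a translation flow in appropriate coordinates.

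First, I would compute the commutator Lie algebra. From the only nontrivial brackets $[\mathbf{f}_0, \mathbf{f}_i] = \mathbf{f}_{i+1}$ for $1 \leq i \leq d-1$, it follows immediately that $\mathfrak{f}^{(2)} = [\mathfrak{f},\mathfrak{f}] = \operatorname{span}\{\mathbf{f}_2, \dots, \mathbf{f}_d\}$, and therefore $\mathfrak{f}/\mathfrak{f}^{(2)}$ is 2-dimensional, spanned by the classes $[\mathbf{f}_0]$ and $[\mathbf{f}_1]$. Since $F$ is connected, simply connected and nilpotent, the exponential map is a diffeomorphism and $F^{(2)} = \exp(\mathfrak{f}^{(2)})$ is a closed connected normal subgroup, so the abelianization $F/F^{(2)}$ is a connected, simply connected, abelian Lie group of dimension $2$, hence isomorphic to $\R^2$.

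Next I need to promote $\abel$ to a factor map of the dynamical system. A classical result of Mal\textquotesingle cev for lattices in nilpotent Lie groups (see e.g.~\cite{corwin:nilpotent}) guarantees that $\Lambda \cap F^{(2)}$ is a lattice in $F^{(2)}$, so that the image $\abel(\Lambda)$ is a lattice in $F/F^{(2)} \simeq \R^2$. Consequently the quotient $\abel(\Lambda) \backslash (F/F^{(2)})$ is diffeomorphic to $\T^2$. The group morphism $\abel$ sends left multiplication by $g$ to left multiplication by $\abel(g)$, so it descends to a well-defined continuous surjection $\overline{\abel} \colon \Lambda \backslash F \to \abel(\Lambda) \backslash (F/F^{(2)}) \simeq \T^2$ which is a measurable factor map between the respective Haar measures.

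Finally, I would identify the induced flow. Using $\varphi_t^{\mathbf{w}}(g) = g \exp(t\mathbf{w})$ and the fact that $\abel$ is a group homomorphism, one gets
\begin{equation*}
\overline{\abel}\bigl(\varphi_t^{\mathbf{w}}(\Lambda g)\bigr) = \abel(\Lambda) \cdot \abel(g) \cdot \abel(\exp(t\mathbf{w})) = \abel(\Lambda g) + t\, d(\abel)(\mathbf{w}),
\end{equation*}
where the final equality uses that $F/F^{(2)}$ is abelian and $d(\abel)(\mathbf{w}) = w_0[\mathbf{f}_0] + w_1[\mathbf{f}_1]$. This is precisely the linear flow $\{\overline{\varphi}_t\}_{t\in\R}$ on $\T^2$ with translation vector $(w_0, w_1)$ in the coordinates given by the basis $\{[\mathbf{f}_0],[\mathbf{f}_1]\}$.

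The only nontrivial point in the argument is the lattice step, i.e.~that $\abel(\Lambda)$ is actually a lattice in $F/F^{(2)}$; this is not automatic from discreteness of $\Lambda$ alone, but it follows from the standard theorem that for a lattice in a simply connected nilpotent Lie group, its intersection with any rational (in particular characteristic) subgroup is again a lattice.
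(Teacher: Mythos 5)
Your proof is correct and follows essentially the same route as the paper: identify $F/F^{(2)}\simeq\R^2$, observe that the lattice projects to a lattice (you invoke Mal\textquotesingle cev explicitly where the paper draws a subgroup diagram and reads off that $\Lambda/(\Lambda\cap F^{(2)})\simeq\Z^2$), and note that the differential of the projection sends $\mathbf{w}$ to $(w_0,w_1)$, yielding the linear flow. The only difference is presentational.
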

\begin{proof}
We have the following diagram 
\begin{displaymath}
    \xymatrix{\ &  F\ar@{=}[d] \ar@{-}[ldd] \ar@{=}[rdd] &\ \\
              \ & \Lambda  F^{(2)}\ar@{-}[ld] \ar@{=}[rd] & \ \\
               \Lambda\ar@{=}[dr] & \ &  F^{(2)}\ar@{-}[dl] \\
               \ & \Lambda \cap  F^{(2)} }
\end{displaymath}
where the double line denotes a normal subgroup (indeed, $ F^{(2)}$ is characteristic and $\Lambda  F^{(2)}$ is normal since it contains $ F^{(2)}$).
The quotient $ F/  F^{(2)}$ is an abelian group which is isomorphic to $\R^2$; the abelianized lattice $\Lambda /  \Lambda \cap  F^{(2)} \simeq \Lambda   F^{(2)} /   F^{(2)}$ is isomorphic to $\Z^2$, so that the quotient $ F/ \Lambda   F^{(2)}$ is a $2$-dimensional torus and we obtain an exact sequence
$$
0 \to \Lambda \backslash \Lambda  F^{(2)} \to M \to \T^2 \to 0,
$$
which expresses $M$ as a bundle over the torus $\T^2$ with fibers isomorphic to $\Lambda \backslash \Lambda  F^{(2)}$.
The differential of the induced projection $\overline{\abel} \colon M \to \T^2$  on $M$ maps the vector field $\mathbf{w} = w_0 \mathbf{f}_0 + \cdots + w_d \mathbf{f}_d \in \mathfrak{f}$ to a vector field on $\T^2$, which gives the linear flow $\overline{\varphi}_t (x_0,x_1)= (x_0,x_1) + t (w_0,w_1)$.
\end{proof}
\begin{thm}[see, e.g., {\cite[p.~344]{einsiedler:ergodic}}]\label{thm:abel}
With the notation above, the following are equivalent:
\begin{itemize}
\item[(i)] $\{\varphi^{\mathbf{w}}_t\}_{t \in \R}$ is uniquely ergodic,
\item[(ii)] $\{\varphi^{\mathbf{w}}_t\}_{t \in \R}$ is ergodic with respect to the induced Haar measure $\mu$ on $M$,
\item[(iii)] $\{\overline{\varphi}_t \}_{t \in \R}$ is an irrational linear flow.
\end{itemize}
\end{thm}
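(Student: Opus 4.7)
My plan is to prove the cycle (i) $\Rightarrow$ (ii) $\Rightarrow$ (iii) $\Rightarrow$ (i). The first implication is tautological: the Haar measure $\mu$ is invariant under $\{\varphi^{\mathbf{w}}_t\}$, so by unique ergodicity it is the only invariant probability measure and hence ergodic.

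For (ii) $\Rightarrow$ (iii), I would use that $\overline{\abel} \colon M \to \T^2$ is a measure-theoretic factor map intertwining $\{\varphi^{\mathbf{w}}_t\}$ with $\{\overline{\varphi}_t\}$ and pushing $\mu$ forward to Lebesgue measure on $\T^2$. Ergodicity is inherited by factors, and the standard Fourier argument shows that a linear flow $\overline{\varphi}_t(x_0,x_1) = (x_0,x_1) + t(w_0,w_1)$ on $\T^2$ is ergodic if and only if $w_0$ and $w_1$ are rationally independent, i.e.~if and only if the flow is irrational.

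The substantive direction is (iii) $\Rightarrow$ (i), and the approach is to exploit the descending central series of $\mathfrak{f}_d$. Since the only nontrivial brackets are $[\mathbf{f}_0, \mathbf{f}_i] = \mathbf{f}_{i+1}$, the ideals $\mathfrak{f}^{(i)}$ are spanned by $\{\mathbf{f}_i, \dots, \mathbf{f}_d\}$ for $i \geq 2$ and each quotient $\mathfrak{f}^{(i)} / \mathfrak{f}^{(i+1)}$ is one-dimensional. Setting $N^{(i)} := \Lambda \backslash F / F^{(i+1)}$, one obtains a tower of nilmanifolds
\[
M \twoheadrightarrow N^{(d-1)} \twoheadrightarrow \cdots \twoheadrightarrow N^{(1)} = \T^2,
\]
in which each arrow is a principal circle bundle and each induced flow is a skew-product extension of the one on the next level. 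By hypothesis the base flow on $N^{(1)}$ is uniquely ergodic (irrational linear flows on $\T^2$ are uniquely ergodic by Weyl), and I would climb the tower one level at a time using Furstenberg's criterion for unique ergodicity of group extensions of uniquely ergodic systems. At each step one must rule out nontrivial measurable fiber characters that are eigenfunctions for the extension; via the Baker--Campbell--Hausdorff formula the relevant cocycle is polynomial in the base coordinates, and this non-existence becomes a classical Weyl-equidistribution statement for polynomial sequences mod $1$.

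The main obstacle is implementing the inductive step cleanly: one has to identify the cocycle defining each successive extension, verify the absence of nontrivial measurable eigenfunctions in the fiber direction, and apply Furstenberg's criterion. A conceptually cleaner alternative, which is in any case essentially what is carried out in the rest of Section \ref{s:proof2} to reduce Theorem \ref{th:intro2} to Theorem \ref{th:intro1}, is to produce a global cross-section for the filiform nilflow on which the Poincar\'e return map is a skew-translation of $\T^d$ of the form \eqref{eq:dot}; unique ergodicity of the nilflow then reduces directly to Furstenberg's classical theorem on uniquely ergodic skew-translations on tori, whose hypothesis is precisely the irrationality of the abelianized direction $(w_0, w_1)$.
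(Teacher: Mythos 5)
The paper does not actually prove this theorem: it is stated with the attribution ``see, e.g., \cite[p.~344]{einsiedler:ergodic}'' and used as a cited fact, so there is no paper proof to compare your argument against. Your proposal is nonetheless correct. The two easy implications are handled exactly as expected: (i) $\Rightarrow$ (ii) is immediate since the Haar measure is invariant and the unique invariant measure of a uniquely ergodic system is ergodic, and (ii) $\Rightarrow$ (iii) follows because $\overline{\abel}$ is a measure-preserving factor map and the classical Fourier criterion characterizes ergodicity of linear toral flows. For the substantive direction (iii) $\Rightarrow$ (i), your primary route---climbing the tower $M \twoheadrightarrow N^{(d-1)} \twoheadrightarrow \cdots \twoheadrightarrow \T^2$ of one-dimensional circle-bundle extensions, applying Furstenberg's criterion at each stage, and reducing the cocycle non-triviality to Weyl equidistribution of polynomial sequences---is precisely the classical Auslander--Green--Hahn/Furstenberg argument and in substance what the cited reference carries out. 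Your characterization $\mathfrak{f}^{(i)} = \mathrm{span}\{\mathbf{f}_i,\dots,\mathbf{f}_d\}$ for $i \ge 2$ is correct, so each quotient in the tower is indeed one-dimensional. Your alternative Route B, via the cross-section of Lemma~\ref{th:csfil}, is also valid and, importantly, not circular: the cross-section construction relies only on the lattice normalization of Lemma~\ref{th:latticeF} and the BCH formula, with no ergodicity input, and the Poincar\'e map is a unipotent skew-translation whose translation component in the first coordinate is $w_1/w_0$; combined with Furstenberg's theorem that such skew-translations are uniquely ergodic iff their toral rotation factor is irrational, and with the standard transfer of unique ergodicity between a map and its suspension with continuous positive roof, this yields (iii) $\Rightarrow$ (i) directly. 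Route B is somewhat more economical in the context of this paper since the cross-section machinery is built in \S\ref{s:proof2} regardless.
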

In order to assure that the quasi-abelian filiform nilflow $\{\varphi^{\mathbf{w}}_t\}_{t \in \R}$ is uniquely ergodic, it is therefore sufficient to assume that $w_0$ and $w_1$ are rationally independent, which is a generic condition with respect to the Lebesgue measure on the Lie algebra $\mathfrak{f}$ of $F$.
For simplicity of notation, we will suppress the dependence on $\mathbf{w}$, writing $\varphi_t$ instead of $\varphi^{\mathbf{w}}_t$.

For any quasi-abelian filiform nilflow, there are nontrivial eigenfunctions for the Koopman operator $U_{\varphi_t}$ arising from the ones for the rotation $\overline{\varphi}_t$ on the torus $\T^2$: let $f \in L^2(\T^2)$ be a nontrivial eigenfunction for $U_{\overline{\varphi}_t}$, then the pull-back $(\overline{\abel})^{\ast} f \in L^2(M)$ is an eigenfunction for $U_{\varphi_t}$, since
$$
U_{\varphi_t}( (\overline{\abel})^{\ast}f) = (f \circ \overline{\abel}) \circ \varphi_t = f \circ \overline{\varphi}_t \circ \overline{\abel} = (U_{ \overline{\varphi}} f) \circ \overline{\abel} = f  \circ \overline{\abel}=(\overline{\abel})^{\ast}f.
$$
We are interested in the time-changes of $\{ \varphi_t \}_{t \in \R}$.
Let $\Sigma \subset M$ be a cross-section for the nilflow and let $\Psi \colon \Sigma \to \R_{>0}$ be the first return time function. Let $P \colon \Sigma \to \Sigma$  be the Poincaré map $P \mathbf{x}  = \varphi_{\Psi (\mathbf{x} )}(\mathbf{x} )$ for all $\mathbf{x}  \in \Sigma$. The nilflow $\{\varphi_t\}_{t \in \R}$ on $M$ is isomorphic to the suspension flow over $(\Sigma, P)$ with roof function $\Psi$.
For any time-change, the cross-section $\Sigma$ and the Poincaré map $P$ remain the same; on the other hand, one can check that the first return time map $\Psi^{\alpha}$ for the new flow with infinitesimal generator $\alpha$ is given by
$$
\Psi^{\alpha}(\mathbf{x} ) = \int_0^{\Psi (\mathbf{x} )}(\alpha \circ \varphi_t)(\mathbf{x} ) \diff t.
$$
The time-change $\{ \varphi^{\alpha}_t \}_{t \in \R}$ is then isomorphic to the suspension flow $\{P_t^{\Psi^{\alpha}}\}_{t \in \R}$ over $(\Sigma,P)$ with roof function $\Psi^{\alpha}$.


We prove Theorem \ref{th:intro2} by choosing a cross-section $\Sigma$ for the nilflow $\{{\varphi}_t\}_{t \in \R}$ such that, in appropriate coordinates, $\Sigma \simeq \T^d$ and the Poincaré map is a skew-translation as in Theorem \ref{th:intro1}. Moreover, the first return time is constant for all points in $\Sigma$; see Lemma \ref{th:csfil} below.

\subsection{Exponential coordinates and lattices} 
Let us recall (see, e.g., \cite[Theorem 1.2.1]{corwin:nilpotent}) that for any connected, simply connected nilpotent Lie group $G$ the exponential map $\exp \colon \mathfrak{g} \to G$ is an analytic diffeomorphism and the following \newword{Baker-Campbell-Hausdorff formula} holds:
\begin{equation}\label{eq:BKH}
\exp(\mathbf{v}) \exp(\mathbf{w}) = \exp \left( \mathbf{v} + \mathbf{w} + \frac{1}{2} [\mathbf{v},\mathbf{w}] + \cdots \right) \text{\ \ \ for any\ }  \mathbf{v},\mathbf{w} \in \mathfrak{g}.
\end{equation} 
We can use the exponential map to transfer coordinates from $\mathfrak{f}$ to $F$, so that we can cover the group with a single chart. In these coordinates, usually called the \newword{exponential coordinates}, the multiplication law becomes the Baker-Campbell-Hausdorff (BCH) product $\exp(\mathbf{v} \ast \mathbf{w}) = \exp(\mathbf{v}) \exp(\mathbf{w})$.   
Therefore, we can safely identify $F \simeq (\R^{d+1}, \ast)$.

It is possible to characterize lattices in quasi-abelian filiform groups using exponential coordinates. 
It is well-known that, for any co-compact lattice $\Lambda$, one can choose coordinates so that $\Lambda \simeq \Z^{d+1}$ (see, e.g., \cite[Theorem 5.1.6]{corwin:nilpotent}). However, for completeness and for the reader's convenience, we present a proof that provides new coordinates via a Lie algebra automorphism, hence preserving the Lie brackets.

Let us first state an auxiliary lemma.
Denote by $\text{Ad}\colon F \to \text{GL}(\mathfrak{f})$ the adjoint representation and by $\mathfrak{ad} \colon \mathfrak{f} \to \mathfrak{gl}(\mathfrak{f})$ its differential.

\begin{lemma}\label{th:comm}
For any $\mathbf{v}, \mathbf{w} \in \mathfrak{f}$ we have that 
$$
(-\mathbf{w}) \ast \mathbf{v} \ast \mathbf{w} = \Bigg(\sum_{j =0}^{d-1} \frac{\mathfrak{ad}(\mathbf{w})^j}{j!}\Bigg) \mathbf{v} = \mathbf{v}+ [\mathbf{w},\mathbf{v}] +\frac{1}{2}[\mathbf{w},[\mathbf{w},\mathbf{v}]] + \cdots.
$$
In particular, if $\mathbf{v}$ and $\mathbf{w}$ commute with $[\mathbf{v}, \mathbf{w}]$, we have that $\exp ([\mathbf{v}, \mathbf{w}]) = [\exp(\mathbf{v}), \exp(\mathbf{w})]_F$.
\end{lemma}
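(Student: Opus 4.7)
The plan is to reduce the identity to the standard Lie-theoretic relation $\Ad(\exp \mathbf{w}) = \exp(\mathfrak{ad}(\mathbf{w}))$ and then exploit the fact that $\mathfrak{f}_d$ is $d$-step nilpotent so the exponential series on the right terminates. In exponential coordinates the map $\exp \colon \mathfrak{f} \to F$ is the identity, and group multiplication is the BCH product $\ast$; in particular the conjugation $\exp(-\mathbf{w}) \exp(\mathbf{v}) \exp(\mathbf{w})$ becomes $(-\mathbf{w}) \ast \mathbf{v} \ast \mathbf{w}$. Because conjugation on the group intertwines with the adjoint representation via $g \exp(\mathbf{v}) g^{-1} = \exp(\Ad(g)\mathbf{v})$, this triple BCH product simplifies to $\Ad(\exp(\mathbf{w}))\mathbf{v}$ (up to the sign convention adopted in the paper).

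The next step is to invoke the universal identity $\Ad(\exp \mathbf{w}) = e^{\mathfrak{ad}(\mathbf{w})} = \sum_{j \geq 0} \mathfrak{ad}(\mathbf{w})^j / j!$, valid on any finite-dimensional Lie algebra. Here the nilpotency of $\mathfrak{f}_d$ enters decisively: every $j$-fold iterated bracket $\mathfrak{ad}(\mathbf{w})^j \mathbf{v}$ lies in $\mathfrak{f}^{(j+1)}$, which vanishes as soon as $j \geq d$. Hence the otherwise infinite series truncates and we obtain
\[
(-\mathbf{w}) \ast \mathbf{v} \ast \mathbf{w} = \sum_{j=0}^{d-1} \frac{\mathfrak{ad}(\mathbf{w})^j}{j!}\mathbf{v},
\]
which is exactly the first assertion.

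For the consequence, the hypothesis that $\mathbf{v}$ and $\mathbf{w}$ each commute with $[\mathbf{v}, \mathbf{w}]$ forces $\mathfrak{ad}(\mathbf{w})^2 \mathbf{v} = [\mathbf{w},[\mathbf{w},\mathbf{v}]] = 0$ and similarly kills every higher BCH correction. Applied to the expression of the group commutator $[\exp(\mathbf{v}),\exp(\mathbf{w})]_F = \exp(\mathbf{v})\exp(\mathbf{w})\exp(-\mathbf{v})\exp(-\mathbf{w})$, which in exponential coordinates reads $\mathbf{v} \ast \mathbf{w} \ast (-\mathbf{v}) \ast (-\mathbf{w})$, I would first use the already-proved formula to replace the inner conjugation $\mathbf{v} \ast \mathbf{w} \ast (-\mathbf{v}) = \mathbf{w} + [\mathbf{v},\mathbf{w}]$, and then BCH-multiply by $-\mathbf{w}$ on the right; since $[\mathbf{v},\mathbf{w}]$ is central with respect to both $\mathbf{v}$ and $\mathbf{w}$, all bracket corrections except the linear term collapse, leaving $[\mathbf{v},\mathbf{w}]$. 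Passing back through $\exp$ yields the claim.

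The only real subtleties are bookkeeping ones: keeping BCH conventions and signs for the adjoint action consistent, and verifying carefully that each bracket correction introduced at intermediate BCH steps really does vanish under the centrality hypothesis. Once these conventions are pinned down, both statements follow directly from $\Ad(\exp \mathbf{w}) = e^{\mathfrak{ad}(\mathbf{w})}$ and the truncation afforded by $d$-step nilpotency.
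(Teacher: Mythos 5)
Your proposal is correct and follows essentially the same route as the paper: both proofs reduce the triple BCH product to the identity $\mathrm{Ad}(\exp \mathbf{w}) = \exp(\mathfrak{ad}(\mathbf{w}))$, truncate the exponential series using $d$-step nilpotency, and then specialise to the case where $\mathbf{v},\mathbf{w}$ commute with $[\mathbf{v},\mathbf{w}]$ so that all higher corrections vanish and the BCH product collapses. The only thing to tighten is the sign bookkeeping you yourself flag: with the first identity as stated, $\mathbf{v} \ast \mathbf{w} \ast (-\mathbf{v})$ equals $\mathbf{w} + [-\mathbf{v},\mathbf{w}] = \mathbf{w} - [\mathbf{v},\mathbf{w}]$ (not $\mathbf{w} + [\mathbf{v},\mathbf{w}]$), which after the final BCH step yields $\exp([\mathbf{w},\mathbf{v}])$; this is consistent with the paper's own intermediate computation $\exp(-\mathbf{w})\exp(\mathbf{v})\exp(\mathbf{w}) = \exp(\mathbf{v})\exp([\mathbf{w},\mathbf{v}])$ once the group-commutator convention is fixed, and the sign is immaterial to the way the lemma is used downstream.
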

\begin{proof}
We compute $(\text{Ad} \circ \exp(\mathbf{w}))(\mathbf{v}) = \exp(- \mathbf{w}) \mathbf{v} \exp(\mathbf{w})$. By the commutation rule $\text{Ad} \circ \exp = \exp \circ \mathfrak{ad}$, it equals
$$
(\exp \circ \mathfrak{ad}(\mathbf{w}))(\mathbf{v}) = \mathbf{v} + [\mathbf{w}, \mathbf{v}]+\frac{1}{2}[\mathbf{w},[\mathbf{w},\mathbf{v}]] + \cdots. 
$$
We remark that, since $F$ is $d$-step nilpotent, $\mathfrak{ad}(\mathbf{w})^j = 0$ if $j \geq d$. Applying $\exp$ to both sides, we conclude
$$
\exp(- \mathbf{w}) \exp(\mathbf{v}) \exp(\mathbf{w}) = \exp \Bigg[ \Bigg(\sum_{j =0}^{d-1} \frac{\mathfrak{ad}(\mathbf{w})^j}{j!}\Bigg) \mathbf{v} \Bigg].
$$

If $\mathbf{v}$ and $\mathbf{w}$ commute with $[\mathbf{v}, \mathbf{w}]$, we have explicitly 
$$
\exp(- \mathbf{w}) \exp(\mathbf{v}) \exp(\mathbf{w}) = \exp( \mathbf{v} + [\mathbf{w}, \mathbf{v}] ) = \exp(\mathbf{v}) \exp( [\mathbf{w}, \mathbf{v}] ),
$$
from which we get $\exp ([\mathbf{v}, \mathbf{w}]) = [\exp(\mathbf{v}), \exp(\mathbf{w})]_F$.
\end{proof}

If the integer $E_1$ divides $E_2$ we write $E_1\ |\ E_{2}$.
\begin{lemma}\label{th:latticeF}
Let $\Lambda \leq F$ be a co-compact lattice in the $d+1$-dimensional quasi-abelian filiform group $F=F_d$ equipped with the exponential coordinates. Then, there exist $1=E_1\ |\ E_{2}\ |\ \cdots\ |\ E_d \in \N$, with $i!\ |\ E_i$, such that, up to an automorphism of $F$,  
$$
\Lambda = \left\{ x\mathbf{f}_0 +  \sum_{i=1}^d \frac{y_i}{E_i} \mathbf{f}_{i} : x, y_i \in \Z \right\}.
$$
\end{lemma}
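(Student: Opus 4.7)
The plan is to pick a Mal'cev basis of $\Lambda$ adapted to the lower central series of $F$, normalize it via an algebra automorphism of $\mathfrak{f}_d$, and read off the divisibility of the $E_i$ from the Baker--Campbell--Hausdorff formula. Specifically, the lower central series is $F = F^{(1)} \supset F^{(2)} \supset \cdots \supset F^{(d)} \supset \{e\}$ with $\mathfrak{f}^{(i)} = \mathrm{span}(\mathbf{f}_i, \dots, \mathbf{f}_d)$ for $i\geq 2$; each $F^{(i)}$ is a closed normal subgroup, abelian for $i \geq 2$. By standard structure theory for lattices in nilpotent groups, $\Lambda_i := \Lambda \cap F^{(i)}$ is a co-compact lattice in $F^{(i)}$, and one can select $\lambda_0,\lambda_1 \in \Lambda$ projecting to a $\Z$-basis of $\Lambda/\Lambda_2$ (which has rank $2$ since $F/F^{(2)} \simeq \R^2$) together with $\lambda_i \in \Lambda_i$ projecting to a generator of $\Lambda_i/\Lambda_{i+1}$ (which has rank $1$) for $i \geq 2$. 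Writing every $\lambda_i = c_i \mathbf{f}_i + (\text{terms in } \mathfrak{f}^{(i+1)})$ with $c_i \neq 0$ defines $E_i := 1/c_i \in \R_{>0}$; the goal is to show the $E_i$ can be taken to be integers satisfying $E_{i-1}\,|\,E_i$ and $i!\,|\,E_i$.

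Next I would apply an algebra automorphism of $\mathfrak{f}_d$ to arrange $\lambda_0 = \mathbf{f}_0$ and $\lambda_1 = \mathbf{f}_1$. This is possible because any linear map on $\mathrm{span}(\mathbf{f}_0, \mathbf{f}_1)$ extends uniquely to an automorphism of $\mathfrak{f}_d$ (the bracket relations $\mathbf{f}_{i+1} = [\mathbf{f}_0, \mathbf{f}_i]$ determine the image of each $\mathbf{f}_i$), and adding $\mathfrak{f}^{(2)}$-components to $\phi(\mathbf{f}_0), \phi(\mathbf{f}_1)$ provides further freedom to absorb the higher-order corrections of $\lambda_0, \lambda_1$. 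Proceeding by downward induction on $i$, one uses automorphisms that are trivial on $\mathfrak{f}/\mathfrak{f}^{(i+1)}$ to cancel the terms of $\lambda_i$ in $\mathfrak{f}^{(i+1)}$ without disturbing $\lambda_0, \dots, \lambda_{i-1}$; the outcome is $\lambda_i = (1/E_i)\mathbf{f}_i$ on the nose, for some $E_i > 0$, and $E_1 = 1$ by construction.

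To extract the divisibility, I would then conjugate $\lambda_1 = \mathbf{f}_1$ by $\exp(k\mathbf{f}_0)$ for $k \in \Z$: Lemma \ref{th:comm} yields the element (in exponential coordinates)
\begin{equation*}
(-k\mathbf{f}_0) \ast \mathbf{f}_1 \ast (k\mathbf{f}_0) = \sum_{j=0}^{d-1} \frac{k^j}{j!}\mathbf{f}_{1+j} \in \Lambda.
\end{equation*}
Expressing this element as an integer $\Z$-combination of the normalized basis in the group law, the leading condition in the layer $\mathfrak{f}^{(i)}/\mathfrak{f}^{(i+1)}$ forces $k^{i-1}/(i-1)! \in (1/E_i)\Z$ for every $k \in \Z$, giving both $E_{i-1}\,|\,E_i$ and $(i-1)!\,|\,E_i$. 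The additional factor of $i$ needed to upgrade to $i!\,|\,E_i$ is obtained from comparing BCH corrections: computing $\exp(m\mathbf{f}_0)\ast\exp(n\mathbf{f}_1)$ via BCH produces an element of $\Lambda$ whose $\mathbf{f}_i$-coefficient is a rational polynomial in $m,n$ with denominator $i!$ (explicitly, the leading term $mn^{?}\cdot$(iterated brackets) contributes a $1/i!$ from the BCH expansion), and matching this against the $(1/E_i)\Z$ lattice in the $\mathbf{f}_i$-coordinate forces $i!\,|\,E_i$.

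I expect the main obstacle to be this last refinement from $(i-1)!$ to $i!$: the naive conjugation argument only yields the denominator $(i-1)!$, and the extra factor of $i$ must be teased out from the full BCH expansion of products involving both $\lambda_0$ and $\lambda_1$, where the higher-order terms of BCH genuinely contribute. Once integrality is established, I would conclude by observing that the Mal'cev basis $\lambda_0,\dots,\lambda_d$ generates $\Lambda$ as a group and that the ordered-product decomposition in exponential coordinates yields precisely the set in the statement.
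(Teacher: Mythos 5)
Your overall plan — pick a Mal'cev basis of $\Lambda$ adapted to the lower central series, normalize it by an algebra automorphism to $\mathbf{f}_0, \mathbf{f}_1$, and then read off divisibility from conjugation/BCH — is the same as the paper's, and the remark about $(i-1)!$ versus $i!$ is honest. However, there is a genuine gap at the normalization step. You assert that ``any linear map on $\mathrm{span}(\mathbf{f}_0, \mathbf{f}_1)$ extends uniquely to an automorphism of $\mathfrak{f}_d$'', but this is false whenever $d \geq 3$: for $d \geq 3$ the hyperplane $\mathrm{span}(\mathbf{f}_1, \dots, \mathbf{f}_d)$ is the unique maximal abelian ideal of $\mathfrak{f}_d$ (any ideal not contained in it would contain a vector with nonzero $\mathbf{f}_0$-component, hence also $\mathbf{f}_2$ and $\mathbf{f}_3$, and would fail to be abelian since $[\mathbf{f}_0,\mathbf{f}_2]=\mathbf{f}_3\neq 0$), so every automorphism of $\mathfrak{f}_d$ preserves it. Consequently $\mathbf{f}_1$ can only be the image under an automorphism of a vector with vanishing $\mathbf{f}_0$-component. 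Your $\lambda_1$, picked as a lift of a basis element of $\Lambda/\Lambda_2$, will in general have a nonzero $\mathbf{f}_0$-component, and no automorphism can remove it. This is exactly the point that the paper's proof addresses but your outline skips: one must first show that the $\mathbf{f}_0$-components of $\mathbf{v}_0$ and $\mathbf{v}_1$ are \emph{rationally dependent}, which the paper establishes by considering the brackets $[\mathbf{v}_0, \mathbf{v}_{d-1}]$ and $[\mathbf{v}_1, \mathbf{v}_{d-1}]$ inside the rank-one lattice $\Lambda \cap F^{(d)}$; only then can one multiply $\mathbf{v}_1$ by a power of $\mathbf{v}_0^{-1}$ to kill its first coordinate, and only then does the desired automorphism $\ell$ exist. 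Without this step your argument does not get off the ground for $d \geq 3$.

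Two smaller remarks. First, your ``downward induction using automorphisms trivial on $\mathfrak{f}/\mathfrak{f}^{(i+1)}$'' to make $\lambda_i = (1/E_i)\mathbf{f}_i$ on the nose is more than the paper needs: the paper never normalizes the $\lambda_i$ individually but instead shows directly that $\ell(\Lambda)$ \emph{contains} the product lattice generated by suitable multiples of $\mathbf{f}_i$, and then argues $\ell(\Lambda)$ must equal a product of the form $\Z \times \tfrac{1}{E_1}\Z \times \cdots \times \tfrac{1}{E_d}\Z$; this sidesteps the question of whether the Mal'cev basis elements can be cleaned up one by one. Second, for the refinement from $(i-1)!$ to $i!$ you flag that a BCH computation involving both $\lambda_0$ and $\lambda_1$ should supply the extra factor but you leave it open; the paper's route is to compute the commutator $(-\mathbf{f}_1) * (-\mathbf{f}_0) * \mathbf{f}_1 * \mathbf{f}_0 \in \ell(\Lambda)$ and iterate, obtaining elements supported on $\mathbf{f}_2, \dots, \mathbf{f}_d$ with controlled factorial denominators. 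If you want to make your conjugation idea precise you should carry out exactly this commutator calculation rather than the conjugation of $\mathbf{f}_1$ by $\exp(k\mathbf{f}_0)$ alone, since (as you correctly observe) the latter only yields denominators of $(i-1)!$.
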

\begin{proof}
Let $\pi_{i}$ be the canonical projection of $F=F_d$ onto $F / F^{(i)}$. The image $\pi_2(\Lambda) \subset F / F^{(2)}$ is a lattice in $\R^2$, hence there exist $\mathbf{v}_0, \mathbf{v}_1 \in \Lambda$ such that $\pi_2(\mathbf{v}_0), \pi_2(\mathbf{v}_1)$ generate $\pi_2(\Lambda)$. We can suppose that the first component of $\mathbf{v}_0$ in the basis $\mathcal{F}_d = \{\mathbf{f}_0, \dots, \mathbf{f}_d\}$ is different from zero.

We first show that for every $1 \leq i \leq d$ there exists $\mathbf{v}_i \in \Lambda \cap F^{(i)} \setminus F^{(i+1)}  $. By induction, suppose there exists $\mathbf{v}_{i-1} \in \Lambda \cap F^{(i-1)} \setminus F^{(i)} $ for $i \geq 2$. Then, by Lemma \ref{th:comm},
$$
[\pi_{i+1}(\mathbf{v}_0), \pi_{i+1}(\mathbf{v}_{i-1}) ] = \pi_{i+1} ([\mathbf{v}_0, \mathbf{v}_{i-1}]) \in (\Lambda \cap F^{(i)}) / F^{(i+1)},
$$
since it belongs to the centre of $F / F^{(i+1)}$. It is also different from zero, as $\mathbf{v}_{i-1} \notin  F^{(i)} $.
Thus, there exists $\mathbf{v}_i \in \Lambda \cap F^{(i)} \setminus F^{(i+1)}$ such that $ \pi_{i+1} (\mathbf{v}_i)= \pi_{i+1}([\mathbf{v}_0, \mathbf{v}_{i-1}]) $, hence the claim.

If $d=1$, the group $F_1$ is abelian and isomorphic to $\R^2$ and the conclusion follows. 
Suppose $d \geq 2$ and let $\mathbf{v}_0, \mathbf{v}_1 \in \Lambda$ as above. Consider $\mathbf{v}_{d-1} \in \Lambda \cap F^{(d-1)}$; by Lemma \ref{th:comm}, we have $[\mathbf{v}_0, \mathbf{v}_{d-1}], [\mathbf{v}_1, \mathbf{v}_{d-1}] \in \Lambda \cap F^{(d)}$. The latter is isomorphic to a discrete subgroup of $\R$, thus the two vectors are rationally dependent. This implies that the first coordinate of $\mathbf{v}_0$ and $\mathbf{v}_1$ are rationally dependent. Up to replace $\mathbf{v}_1$ with a vector of the form $(-\mathbf{v}_0) \ast \cdots \ast (-\mathbf{v}_0) \ast \mathbf{v}_1 \ast \cdots \ast \mathbf{v}_1 \in \Lambda$, we can suppose that the first coordinate of $\mathbf{v}_1$ is zero.

Define $\ell \colon F \to F$ as the unique group automorphism such that $\ell(\mathbf{v}_0) = \mathbf{f}_0$ and $\ell(\mathbf{v}_1) = \mathbf{f}_1$. Then, $ \mathbf{f}_0$ and $ \mathbf{f}_1$ generate the projected lattice $\ell(\Lambda) / F^{(2)}$ and moreover, by Lemma \ref{th:comm}, $\ell (\Lambda)$ contains
$$
(- \mathbf{f}_1 ) \ast ( - \mathbf{f}_0) \ast  \mathbf{f}_1 \ast  \mathbf{f}_0 = -  \mathbf{f}_1 \ast \Bigg(  \mathbf{f}_1 + \sum_{i=2}^d \frac{1}{i!} \mathbf{f}_i \Bigg) = \sum_{i=2}^d \frac{1}{i!} \mathbf{f}_i.
$$
Inductively, by replacing $ \mathbf{f}_1$ above with $\sum_{i \geq 2} (i!)^{-1} \mathbf{f}_i$ and so on, it is easy to see that $\ell(\Lambda)$ contains the lattice generated by $\frac{1}{i!} \mathbf{f}_i $, hence
$$
\ell(\Lambda ) = \Z \times \frac{1}{E_1} \Z \times \cdots \times \frac{1}{E_d} \Z,
$$
for some integers $E_1 =1, E_2, \dots, E_d$ such that $i!\ |\ E_i$. 
Moreover, for all $1\leq i \leq d$, 
$$
\Big(- \frac{1}{E_i} \mathbf{f}_i \Big) \ast ( - \mathbf{f}_0) \ast  \Big( \frac{1}{E_i} \mathbf{f}_i \Big) \ast  \mathbf{f}_0 = \frac{1}{E_i} \mathbf{f}_{i+1} + \text{ terms in }F^{(i+2)},
$$
hence $E_i\ |\ E_{i+1}$.
\end{proof}
We consider the new basis $\mathcal{F}_d\rq{} = \{ \mathbf{f}_0\rq{}, \dots, \mathbf{f}_d\rq{} \}$, where $\mathbf{f}_0\rq{} = \mathbf{f}_0$ and $\mathbf{f}_{i}\rq{} = (1/E_i) \mathbf{f}_{i}$ for $i=1, \dots, d$. In this way, we have $\Lambda = (\Z^{d+1}, \ast) \leq F$ and the only nontrivial brackets are $[\mathbf{f}_0\rq{}, \mathbf{f}_i\rq{} ]= (E_{i+1}/E_i) \mathbf{f}_{i+1}\rq{}$.

\subsection{Reduction to suspension flows}

Let $\mathbf{w} = (w_0, \dots, w_d) \in \mathfrak{f}$ be a vector inducing a uniquely ergodic nilflow on $M = \Lambda \backslash F$; equivalently, by Theorem \ref{thm:abel}, such that $w_0 / w_1 \notin \mathbb{Q}$.
Define the smooth submanifold
$$
\Sigma = \{ \Lambda (0,x_1, \dots, x_d) : x_i \in \R,\ 1\leq i \leq d\}.
$$
Since the ideal generated by $\mathbf{f}_1, \dots, \mathbf{f}_{d}$ is abelian, the submanifold $\Sigma$ is isomorphic to a torus $\T^d$ via the map 
\begin{equation*}
\begin{split}
\varsigma \colon \bigslant{\R^{d}}{\Z^d}\text{\ \ \ \ \ \ \ \ \ \ \ } &\to \Sigma \\
\mathbf{x} = (x_1, \dots, x_d) & \mapsto \Lambda (0,x_1, \dots, x_d).
\end{split}
\end{equation*}
\begin{lemma}\label{th:csfil}
The first return time to $\Sigma$ is constant for any point of $\Sigma$; the Poincaré map $P \colon \Sigma \to \Sigma$ is given by
$$
P \circ \varsigma(\mathbf{x}) = \varsigma \left( \mathbf{x} A + \mathbf{b}\right)
$$
for some $\mathbf{b} \in \T^d$ and an upper triangular $d \times d$ matrix $A = (a_{i,j})$, with $a_{i,j} = E_j / (E_i \cdot (j-i)!)$ for $1\leq i\leq j \leq d$. 
\end{lemma}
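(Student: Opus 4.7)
The plan is to work in exponential coordinates, using the matrix representation of $\mathfrak{f}_d$ from \S\ref{s:2.1} to realize the BCH product as multiplication of strictly upper-triangular $(d+1)\times(d+1)$ matrices. Let $\mathfrak{a} = \mathrm{span}_{\R}(\mathbf{f}_1', \dots, \mathbf{f}_d')$ denote the abelian ideal of $\mathfrak{f}_d$. Two structural facts will drive the computation: (i) $[\mathfrak{a}, \mathfrak{a}] = 0$, so every iterated Lie bracket with two or more entries from $\mathfrak{a}$ vanishes, and in particular the $\mathbf{f}_0'$-coordinate of a BCH product is additive; and (ii) $\mathfrak{ad}(\mathbf{f}_0')^{j-i}\,\mathbf{f}_i' = (E_j/E_i)\,\mathbf{f}_j'$ for $1 \le i \le j \le d$, obtained by telescoping the brackets $[\mathbf{f}_0', \mathbf{f}_i'] = (E_{i+1}/E_i)\,\mathbf{f}_{i+1}'$ from Lemma \ref{th:latticeF}.

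I first show that the first return time to $\Sigma$ is the constant $\tau = 1/w_0$ (after replacing $\mathbf{w}$ by $-\mathbf{w}$ if needed to ensure $w_0>0$). For $\mathbf{x} = \sum_{i=1}^d x_i \mathbf{f}_i' \in \mathfrak{a}$ one has $\varphi_t(\varsigma(\mathbf{x})) = \Lambda\exp(\mathbf{x} \ast t\mathbf{w})$, which lies in $\Sigma$ exactly when the $\mathbf{f}_0'$-component of $\mathbf{x} \ast t\mathbf{w}$ lies in $\mathbb{Z}$ (since $\exp(\mathbf{f}_0') \in \Lambda$ by Lemma \ref{th:latticeF}). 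By (i), every Lie bracket appearing in the BCH expansion of $\mathbf{x} \ast t\mathbf{w}$ lies in $\mathfrak{a}$ and contributes nothing to the $\mathbf{f}_0'$-coordinate, so that coordinate equals $tw_0$; the smallest positive $t$ with $tw_0 \in \mathbb{Z}$ is $\tau=1/w_0$, independently of $\mathbf{x}$.

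Next I compute the Poincaré map. In the matrix representation, every element of $\mathfrak{a}$ has non-zero entries only in the last column, and the last row of the matrix representing $N := \mathbf{f}_0'$ is zero; consequently, for any $Y \in \mathfrak{a}$ we have $Y^2 = YN = 0$. Setting $Y := \tau\mathbf{w} - \mathbf{f}_0' \in \mathfrak{a}$ (using $\tau w_0 = 1$), induction yields $(N+Y)^k = N^k + N^{k-1}Y$ for every $k \geq 1$, which collapses $\exp(\tau\mathbf{w})$ to a finite sum whose last column is computable in closed form. Similarly, $\exp(\mathbf{x}) = I + \mathbf{x}$. Since the $\mathbf{f}_0'$-component of $g := \exp(\mathbf{x})\exp(\tau\mathbf{w})$ is $1$ by Step 1, left-multiplying by the lattice element $\exp(-\mathbf{f}_0') \in \Lambda$ produces $g' = \exp(\mathbf{x}')$ for a unique $\mathbf{x}' \in \mathfrak{a}$, and $P\varsigma(\mathbf{x}) = \varsigma(\mathbf{x}' \bmod \Z^d)$.

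Finally, $A$ and $\mathbf{b}$ are read off from the last column of $g'$: in the matrix representation, the entry of $\exp(\mathbf{x}')$ in row $d-j+1$ and column $d+1$ equals $x_j'/E_j$. Expanding the product $\exp(-N)\exp(\mathbf{x})\exp(\tau\mathbf{w})$ shows that $x_j'$ is affine in $(x_1,\dots,x_d)$; the coefficient of $x_i$ combines the $1/(j-i)!$ factor from the $(d-j+1,\,d-i+1)$-entry of $\exp(\pm N)$ with the rescaling factor $E_j/E_i$ between the bases $\mathcal{F}$ and $\mathcal{F}'$, yielding $a_{i,j} = E_j/(E_i(j-i)!)$ for $i\le j$ and zero otherwise. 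The part of $g'$ independent of $\mathbf{x}$ supplies the translation vector $\mathbf{b}$. The main obstacle is the combinatorial bookkeeping: matching the reversed indexing of the matrix representation (in which $\mathbf{f}_i'$ lives at row $d-i+1$ of the last column) with the structure constants $E_j/E_i$ and the factorials from the exponential, so that everything collapses to the stated skew-translation form.
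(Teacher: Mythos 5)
Your strategy is essentially the paper's: identify the constant return time $1/w_0$ via additivity of the $\mathbf{f}_0'$-coordinate under BCH, then compute the group element $\exp(-\mathbf{f}_0')\exp(\mathbf{x})\exp(\tau\mathbf{w})$ and read off the skew-translation. The paper carries out this last computation abstractly via Lemma \ref{th:comm} (conjugation by $\exp(\mathbf{w})$ equals $\exp(\mathfrak{ad}(\mathbf{w}))$ on $\mathfrak{f}$), whereas you work directly in the $(d{+}1)\times(d{+}1)$ matrix realization of $\mathfrak{f}_d$, using $Y^2=YN=0$ to collapse $\exp(\tau\mathbf{w})$ and $\exp(\mathbf{x})$ to finite polynomials. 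Both rest on the same structural facts you isolate: $[\mathfrak{a},\mathfrak{a}]=0$ and $\mathfrak{ad}(\mathbf{f}_0')^{j-i}\mathbf{f}_i'=(E_j/E_i)\mathbf{f}_j'$, so the route is a concrete matrix implementation of the same argument.

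One caution on the ``combinatorial bookkeeping'' you leave sketched. Since $g'=\exp(-N)\exp(\mathbf{x})\exp(\tau\mathbf{w})$ and $X_0\exp(N)=X_0$ (the last column of $\exp(N)$ is $\mathbf{e}_{d+1}$ and $X_0$ is supported in that column), the term linear in $\mathbf{x}$ is $\exp(-N)X_0$, whose $(d{-}j{+}1,d{+}1)$-entry carries the factor $(-1)^{j-i}/(j-i)!$ from $\exp(-N)$, not $1/(j-i)!$ from $\exp(N)$. Carrying this out produces $a_{i,j}=(-1)^{j-i}E_j/\bigl(E_i(j-i)!\bigr)$, which matches the lemma only up to sign. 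This is not a defect in your method: the paper's own proof has the identical sign slip (its Lemma \ref{th:comm} should read $\mathbf{w}\ast\mathbf{v}\ast(-\mathbf{w})=\exp(\mathfrak{ad}(\mathbf{w}))\mathbf{v}$, with $\mathbf{w}$ and $-\mathbf{w}$ swapped), and the sign is immaterial downstream, since all that is used later is that $A$ is upper-triangular unipotent with nonzero super-diagonal entries.
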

\begin{proof}
Let $\varsigma(\mathbf{x}) = \Lambda (0,\mathbf{x}) \in \Sigma$. By definition, we have
$$
\varphi_{1/w_0}(\Lambda (0,\mathbf{x})) = \Lambda (0, x_1, \dots, x_d) \ast \Big( 1, \frac{w_1}{w_0}, \dots, \frac{w_d}{w_0}\Big).
$$
Since $\Lambda = \Lambda (-1,0, \dots, 0)$, by Lemma \ref{th:comm} we get
\begin{equation*}
\begin{split}
\varphi_{1/w_0}(\Lambda (0,\mathbf{x})) &= \Lambda (-1,0, \dots, 0) \ast (0, x_1, \dots, x_d) \ast \Big( 1, \frac{w_1}{w_0}, \dots, \frac{w_d}{w_0}\Big)\\
&=  \Lambda \Bigg( \sum_{j=0}^d \frac{\mathfrak{ad}(1,0, \dots, 0)^j}{j!} (0, \mathbf{x}) \Bigg) \ast (-1,0, \dots, 0) \ast \Big( 1, \frac{w_1}{w_0}, \dots, \frac{w_d}{w_0}\Big).
\end{split}
\end{equation*}
Therefore, defining $(0,b_1, \dots, b_d) = (-1,0, \dots, 0) \ast (1, w_1/w_0, \dots, w_d/w_0)$, we obtain
\begin{equation*}
\begin{split}
\varphi_{1/w_0}(\Lambda (0,\mathbf{x})) &= \Lambda \Bigg( 0, x_1, \dots, \sum_{i=0}^{j-1} \frac{1}{(j-i)!} \frac{E_j}{E_{i}} x_{i}, \dots, \Bigg) \ast ( 0, b_1, \dots, b_d)\\
&= \Lambda \Bigg( 0, x_1 + b_1, \dots, \sum_{i=0}^{j-1} \frac{1}{(j-i)!} \frac{E_j}{E_{i}} x_{i} + b_j, \dots \Bigg).
\end{split}
\end{equation*}
The set of return times to $\Sigma$ is a subset of the set of the return times of the projected linear flow on the abelianization $F/F^{(2)} \simeq \T^2$, which is $(1/w_0)\Z$.
The equation above shows that $1/w_0$ is indeed a return time, hence it is the first return time to $\Sigma$, and the Poincaré map is of the requested form.
\end{proof}

We showed that any uniquely ergodic nilflow $\{ \varphi_t \}_{t \in \R}$ is isomorphic to a suspension flow over a skew-translation $(\T^d,T)$ with constant roof function $\Psi \equiv 1$. As discussed in \S\ref{s:2}, given the infinitesimal generator $\alpha$ of a time-change $\{ \varphi^{\alpha}_t \}_{t \in \R}$, the new roof function $\Psi^{\alpha} = R(\alpha)$ is given by 
$$
\Psi^{\alpha}(\mathbf{x}) = \int_0^1 (\alpha \circ \varphi_t)(\mathbf{x}) \diff t.
$$
The map $R \colon \mathscr{C}^{\infty}(M) \to \mathscr{C}^{\infty}(\T^d)$, $R(\alpha) = \Psi^{\alpha} $ is linear, surjective and continuous w.r.t.~$\norma{\cdot}_{\infty}$, thus $R^{-1}(\mathscr{R})$ is a dense set of infinitesimal generators. Theorem \ref{th:intro2} now follows from Theorem~\ref{th:intro1}.


\section{Appendix: proof of Theorem \ref{th:ulcigrai}}\label{SU}

The proof of this result follows closely the argument by Avila, Forni and Ulcigrai in \cite{avila:heisenberg}: we outline the main ideas, referring the reader to the cited article for the details. We use the same notation as in \S\ref{ST1}.


\subsection{Shearing}
We briefly explain the shearing phenomenon that produces mixing; a similar mechanism was used by many authors in different contexts, see \cite{marcus:horocycle, sinai:mixing, fayad:mixing, ulcigrai:mixing, ravotti:lhf}.
We want to apply the following criterion, see \cite{fayad:mixing} and \cite[\S1.3.2]{ulcigrai:mixing} for details.
\begin{lemma}[Mixing Criterion]\label{th:mixingcriterion}
The suspension flow $\{T_t^{\Psi}\}_{t \in \R}$ is mixing if for any cube $Q= \prod_{i=1}^d [w_i, w_i\rq{}] \times [0,h]$, with $0 < h <\min \Psi$, any $\varepsilon >0$ and $\delta >0$ there exists $t_0 \geq 0$ such that for all $t \geq t_0$ there exists a measurable set $\widehat{X}(t) \subset \T^{d-1}$ and for each $\widehat{\mathbf{x}} = \pi(\mathbf{x}) \in \widehat{X}(t)$ there exists a partition $\ptx{m} $ into intervals $J \subset  \{\widehat{\mathbf{x}} \} \times \T$ such that 
\begin{equation}\label{eq:mixingset}
\misura_d \big(\T^d \setminus \cup_{\widehat{\mathbf{x}}  \in \widehat{X}(t)} \ptx{m} \big) \leq \delta,
\end{equation} 
and for all $\widehat{\mathbf{x}}  \in \widehat{X}(t)$ and all $J= \{ \widehat{\mathbf{x}} \} \times   [a,b]\in \ptx{m}$,
\begin{equation}\label{eq:mixingestimate}
\misura_1 \big( J \cap T^{\Psi}_{-t}(Q) \big) \geq (1-\varepsilon)(b-a)\misura_d(Q).
\end{equation}
\end{lemma}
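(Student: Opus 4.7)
The plan is to verify the definition of mixing on a family of sets sufficient to generate the Borel $\sigma$-algebra, namely cubes of the form $\prod[w_i,w_i']\times[0,h]$: since these generate the full $\sigma$-algebra on the suspension space, by standard approximation it suffices to show $\misura_{d+1}(R\cap T_{-t}^{\Psi}(Q))\to\misura_{d+1}(R)\misura_{d+1}(Q)$ for any two such cubes $Q$ and $R$.

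Fix cubes $Q$ and $R=\widehat{R}\times[w_d,w_d']\times[0,h]$, and fix $\varepsilon,\delta>0$. Invoking the hypothesis applied to $Q$ with suitably small parameters $\varepsilon',\delta'$ yields, for $t\geq t_0$, a good set $\widehat{X}(t)$ and partitions $\mathcal{P}_m(t,\widehat{\mathbf{x}})$. I would then decompose $\misura_{d+1}(R\cap T_{-t}^{\Psi}(Q))$ via Fubini: first integrate over the $r$-variable in $[0,h]$, using invariance of Lebesgue measure under the flow to absorb the translation in the fiber direction and replace $T_{-t}^{\Psi}$ by $T_{-(t+r)}^{\Psi}$ acting on the base slice at height $0$; then integrate over the $\widehat{\mathbf{x}}$-coordinate in $\pi(\widehat{R})\subset\T^{d-1}$, reducing the problem to estimating the one-dimensional measure $\misura_1(\{\widehat{\mathbf{x}}\}\times[w_d,w_d']\cap T_{-(t+r)}^{\Psi}(Q))$ for each fixed pair $(r,\widehat{\mathbf{x}})$. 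For $\widehat{\mathbf{x}}\in\widehat{X}(t+r)$, I would split this fiber-interval according to $\mathcal{P}_m(t+r,\widehat{\mathbf{x}})$: sub-intervals $J$ entirely contained in $[w_d,w_d']$ contribute collectively at least $(1-\varepsilon')\misura_{d+1}(Q)\sum_J|J|$ by the estimate \eqref{eq:mixingestimate}, while intervals crossing the endpoints of $[w_d,w_d']$, the uncovered portion of each fiber, and the bad set $\widehat{\mathbf{x}}\notin\widehat{X}(t+r)$ together contribute errors controlled by $\delta'$ and by the maximum length of a partition interval. Integrating out and choosing $\varepsilon',\delta'$ sufficiently small in terms of $\varepsilon,\delta,\misura_{d+1}(R)$ yields the lower bound
\[
\misura_{d+1}\bigl(R\cap T_{-t}^{\Psi}(Q)\bigr)\geq(1-\varepsilon)\misura_{d+1}(R)\misura_{d+1}(Q)-\delta.
\]

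To obtain the matching upper bound, I would cover the suspension space by a fine grid of disjoint cubes $\{Q_j\}_j$ and apply the above lower bound with $R$ against each $Q_j$. Summing the estimates and using $\sum_j\misura_{d+1}(R\cap T_{-t}^{\Psi}(Q_j))=\misura_{d+1}(R)=\sum_j\misura_{d+1}(R)\misura_{d+1}(Q_j)$ forces the total slack $\sum_j\bigl[\misura_{d+1}(R)\misura_{d+1}(Q_j)-\misura_{d+1}(R\cap T_{-t}^{\Psi}(Q_j))\bigr]^{+}$ to be $O(\varepsilon\misura_{d+1}(R)+\delta)$, so each individual deviation is small, producing an upper bound of the same form as the lower bound for every grid cube. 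An arbitrary cube $Q$ is then approximated by a disjoint union of grid cubes with error controlled by the grid spacing.

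The main obstacle I expect is controlling the boundary error coming from partition intervals $J$ that straddle the endpoints of $[w_d,w_d']$: the statement does not explicitly bound $\max_{J\in\mathcal{P}_m(t,\widehat{\mathbf{x}})}|J|$, so either this fineness must be extracted from the parameter $m$ (with the understanding that $\max_J|J|\to 0$ as $m$ or $t$ grows), or each $J$ must be further subdivided into sub-intervals of arbitrarily small length while preserving \eqref{eq:mixingestimate} at the cost of a slightly enlarged $\varepsilon$. Once this fineness is in place, the Fubini computation, the use of flow-invariance of the measure in the $r$-variable, and the cube approximation are all routine.
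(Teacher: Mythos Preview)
The paper does not prove this lemma; it simply refers the reader to \cite{fayad:mixing} and \cite[\S1.3.2]{ulcigrai:mixing}. Your outline---reduce to pairs of cubes, apply Fubini in the $r$-variable and the $\widehat{\mathbf{x}}$-variable, invoke the hypothesis on each fibre to obtain a lower bound, then deduce the upper bound by summing the lower bounds over a grid $\{Q_j\}$---is exactly the standard argument given in those references, so there is nothing to contrast at the level of strategy. (A minor simplification: once the lower bound $\liminf_t\misura(R\cap T_{-t}^{\Psi}(Q))\geq\misura(R)\misura(Q)$ is established for all cubes, it passes to finite disjoint unions and hence to the generated algebra; there complements are available and the upper bound follows immediately from the lower bound applied to $Q^{c}$. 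This avoids the quantitative balancing of $\varepsilon$ against $\mu(Q_j)\sim 1/N$ that your grid argument requires.)

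You have correctly located the one real gap: the hypothesis as written contains no bound on $\max_{J\in\mathcal{P}_m(t,\widehat{\mathbf{x}})}|J|$, and without it the two boundary intervals straddling the endpoints of $[w_d,w_d']$ may cover an uncontrolled fraction of the fibre, destroying the lower bound for a general cube $R$. Your first proposed fix is the correct one: in the actual construction the intervals satisfy $\misura_1(J)\leq\tfrac12(w_d'-w_d)\varepsilon_0$ (equation~\eqref{eq:final3}), and the formulations in the cited references include such a smallness condition explicitly; the statement here should be read with that implicit. Your second proposed fix, however, fails: if $J$ satisfies \eqref{eq:mixingestimate} and one subdivides $J=J_1\cup J_2$, there is no reason either piece inherits the estimate, even with enlarged $\varepsilon$---the mass of $J\cap T_{-t}^{\Psi}(Q)$ may lie entirely in $J_1$, leaving $J_2$ with nothing. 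Subdivision cannot manufacture fineness after the fact.
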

In order to apply Lemma \ref{th:mixingcriterion}, we will construct a partition of intervals $J$ in the $x_d$-direction most of which becomes \newword{sheared} for sufficiently large $t$. More precisely, for any $J = \{ \widehat{\mathbf{x}} \} \times   [a,b]$, we define the \newword{stretch} of $S_n(\Psi)$ over $J$ as
$$
\Delta S_n(\Psi)(J) = \max_{\mathbf{x} \in J} S_n(\Psi)(\mathbf{x}) - \min_{\mathbf{x} \in J} S_n(\Psi)(\mathbf{x}).
$$  
We will prove that, for a set of intervals $J$ whose measure is large in $\T^d$, the stretch $\Delta S_n(\Psi)(J)$ is large for all $n$ of the form $n = n_t(\mathbf{x})$ for some $\mathbf{x} \in J$ and large $t$. This would imply that the image of $J$ after time $t$ can be written as the union of curves $\gamma_i = T^{\Psi}_t(J_i)$, for subintervals $J_i \subset J$, which project over intervals in the $x_d$-direction and on which the derivative $\partial_d S_n(\Psi)$ of $S_n(\Psi)$ w.r.t.~$x_d$ is large. The base points of these curves, i.e.~the intersections $\gamma_i \cap \T^d \times \{0\}$, shadow with good approximation an orbit under $T$, hence, by unique ergodicity, are uniformely distributed in $\T^d$; this leads to the mixing estimate.


\subsection{Stretch of Birkhoff sums for continuous time}
                           
Recall that for $\mathbf{x}=(\widehat{\mathbf{x}} ,x_d) \in \T^d$ we denote $n_t(\mathbf{x}) = \max \{ n : S_n(\Psi)(\mathbf{x}) \leq t\}$; let
\begin{equation*}
\ntyx = \min \{n_t(\widehat{\mathbf{x}} ,x_d) : x_d \in \T \}. \\ 
\end{equation*}
The following lemma ensures that the Birkhoff sums $S_n(\Psi^{\perp})$ grow in measure not only as $n$ tends to infinity (see Theorem \ref{th:stretchingroof}), but also when $t$ tends to infinity. The proof uses the assumption that $\psi$ is smoothly cohomologous to a constant. 
\begin{lemma}
For all $C>1$, let 
$$
\widehat{X}(t,C) = \left\{ \widehat{\mathbf{x}}  \in \T^{d-1} : \text{there exists $x_d \in \T$ s.t.} \modulo{S_{\ntyx}(\Psi^{\perp})( \widehat{\mathbf{x}} ,x_d)} >C \right\}.
$$
Then
$$
\lim_{t \to \infty} \misura \left( \T^{d-1} \setminus \widehat{X}(t,C) \right) = 0.
$$
\end{lemma}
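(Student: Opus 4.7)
The plan is to leverage the cohomological equation for $\psi_{d-1}$ to rigidify $\ntyx$, reducing the claim to finitely many applications of Theorem~\ref{th:stretchingroof}. First, write $\Psi = \psi_{d-1}\circ \pi_{d-1} + \Psi^{\perp}$ and use the hypothesis $\psi_{d-1}-\int \psi_{d-1}\diff\misura_{d-1} = u \circ T_{d-1} - u$ for a smooth $u \colon \T^{d-1} \to \R$, together with $\int \Psi^{\perp}\diff\misura_d = 0$. Telescoping along Birkhoff sums produces the identity
$$
S_n(\Psi)(\mathbf{x}) = n\int_{\T^d}\Psi\,\diff\misura_d + S_n(\Psi^{\perp})(\mathbf{x}) + u(T_{d-1}^n \widehat{\mathbf{x}}) - u(\widehat{\mathbf{x}}),
$$
so that $S_n(\Psi)$ differs from $n\int_{\T^d}\Psi\,\diff\misura_d + S_n(\Psi^{\perp})$ by a quantity bounded uniformly by $2\norma{u}_{\infty}$.

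The first main step is to show that, on the complement of $\widehat{X}(t,C)$, the integer $\ntyx$ is pinned to a bounded window around $N(t) := t/\int_{\T^d}\Psi\,\diff\misura_d$. Let $x_d^{\ast}\in\T$ realize $\ntyx = n_t(\widehat{\mathbf{x}},x_d^{\ast})$ and observe that $S_{\ntyx}(\Psi)(\widehat{\mathbf{x}},x_d^{\ast}) \leq t < S_{\ntyx+1}(\Psi)(\widehat{\mathbf{x}},x_d^{\ast})$; inserting the identity above and using the hypothesis $|S_{\ntyx}(\Psi^{\perp})(\widehat{\mathbf{x}},x_d^{\ast})|\leq C$ (together with the trivial bound $|S_{\ntyx+1}(\Psi^{\perp})| \leq C + \norma{\Psi^{\perp}}_{\infty}$) yields $|\ntyx - N(t)|\leq K$ for some constant $K$ depending on $C$, $\norma{u}_{\infty}$, $\norma{\Psi^{\perp}}_{\infty}$ and $\int\Psi\,\diff\misura_d$, but not on $t$ or $\widehat{\mathbf{x}}$.

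The second main step is straightforward. Set $A_n = \{\widehat{\mathbf{x}}\in\T^{d-1} : |S_n(\Psi^{\perp})(\widehat{\mathbf{x}},x_d)|\leq C\text{ for all } x_d\in\T\}$. The previous step gives the inclusion $\T^{d-1}\setminus\widehat{X}(t,C) \subseteq \bigcup_{|n-N(t)|\leq K} A_n$. For each $n$, the containment $A_n\times\T \subseteq \{|S_n(\Psi^{\perp})|\leq C\}$ together with Fubini's theorem yields $\misura_{d-1}(A_n)\leq \misura_d\{|S_n(\Psi^{\perp})|\leq C\}$, which tends to $0$ as $n\to\infty$ by Theorem~\ref{th:stretchingroof}. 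Since the window contains at most $2K+2$ integers, all of which diverge to infinity with $t$, the union has Lebesgue measure tending to zero, completing the proof.

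The main obstacle I anticipate is the apparently uncontrolled dependence of $\ntyx$ on $\widehat{\mathbf{x}}$: Theorem~\ref{th:stretchingroof} only provides estimates along deterministic sequences $n \to \infty$, so a separate mechanism is needed to trap $\ntyx$ within a fixed window around a deterministic value. The smooth cohomology hypothesis on $\psi_{d-1}$ is precisely what supplies this rigidity, through the uniformly bounded correction term $u \circ T_{d-1}^n - u$; without smoothness (say if $u$ were only measurable and unbounded), the argument would collapse, consistent with the need for an extra hypothesis beyond the setting of Theorem~\ref{th:stretchingroof}.
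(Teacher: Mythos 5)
Your proposal is correct and follows essentially the same strategy as the paper: both exploit the cohomological equation for $\psi_{d-1}$ to pin $\ntyx$ within a bounded (in $\widehat{\mathbf{x}}$ and $t$) window around $t/\int\Psi$, then apply Fubini and Theorem~\ref{th:stretchingroof} over that window. The paper phrases this as a contradiction plus pigeonhole (extracting a single sequence $n_j\to\infty$ along which the measure stays $\geq\delta/K$), whereas you give a direct union bound over the at most $2K+2$ admissible values of $\ntyx$; this is a cosmetic difference and both are fine.
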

\begin{proof}
Let us assume by contradiction that there exist $C>1$, $\delta >0$ and an increasing sequence $\{t_j\}_{j \in \N}$, with $t_j \to \infty$, such that $\misura \left( \T^{d-1} \setminus \widehat{X}(t_j,C) \right) \geq \delta$ for all $j \in \N$. 
If $\widehat{\mathbf{x}} \notin \widehat{X}(t_j,C)$, for all $x_d \in \T$ we have $|S_{\ntjx}(\Psi^{\perp})(\widehat{\mathbf{x}} ,x_d)| \leq C$; thus, by Fubini\rq{}s Theorem, 
$$
\misura \left\{ \mathbf{x} \in \T^d : \modulo{S_{\ntjx}(\Psi^{\perp})(\widehat{\mathbf{x}} ,x_d)} \leq C \right\} \geq \misura \left( \T^{d-1} \setminus X(t_j,C) \right) \geq \delta.
$$

As we want to get a contradiction with Theorem \ref{th:stretchingroof}, we look for a sequence $\{\ntjx\}_{j \in \N}$ not depending on the point $\widehat{\mathbf{x}}$. Since $\psi$ is smoothly cohomologous to the constant $\int \Psi$, there exists a smooth function $u \colon \Sigma \to \R$ such that $\psi - \int \Psi = u \circ T - u$. Let $\mathbf{y}$ be the point in $\T^d$ for which $\ntjx = n_{t_j}(\mathbf{y})$. We have
\begin{equation*}
\begin{split}
S_{\ntjx}(\Psi)(\mathbf{y}) &= S_{\ntjx}(\Psi^{\perp})(\mathbf{y}) + S_{\ntjx}(\psi)(\mathbf{y}) \\
& = S_{\ntjx}(\Psi^{\perp})(\mathbf{y})  + u(T^{\ntjx}\mathbf{y})  - u(\mathbf{y}) + \ntjx \cdot \int_{\T^d} \Psi\diff \misura_d.
\end{split}
\end{equation*}
Let $\overline{u}$ and $\overline{\Psi}$ be the maximum of $\modulo{u}$ and of $\Psi$ over $\T^d$. Since, by definition, $t_j - \overline{\Psi} \leq S_{\ntjx}(\Psi)(\mathbf{y}) = S_{n_{t_j}(\mathbf{y})}(\Psi)(\mathbf{y}) \leq t_j$, from the previous equation it follows that for all $\widehat{\mathbf{x}} \notin \widehat{X}(t_j,C)$, 
$$
t_j - \overline{\Psi} - C - 2 \overline{u} \leq \ntjx \cdot \int_{\T^d} \Psi\diff \misura_d \leq t_j + C + 2\overline{u}.
$$
In particular, there exists a constant $K$ such that for all $t_j$ there are at most $K$ possible values of $\ntjx$. Therefore, there exists a sequence $n_j = \underline{n}_{t_j}(\mathbf{x}_j)$ such that  $\misura( |S_{n_j}(\Psi^{\perp})(\widehat{\mathbf{x}},x_d)| \leq C) \geq \delta / K$, so that $\limsup_{n \to \infty}\misura(|S_n(\Psi^{\perp})|\leq C) \geq \delta / K >0$, in contradiction with Theorem \ref{th:stretchingroof}.
\end{proof}

\begin{remark}
Straightforward computations show that $\partial_d(S_n(\Psi)) = S_n(\partial_d\Psi) = S_n(\partial_d \Psi^{\perp})$ and $\partial_d^2(S_n(\Psi)) = S_n(\partial_d^2\Psi) = S_n(\partial_d^2 \Psi^{\perp})$ for all $n \geq 1$. Indeed, $\partial_d \Psi = \partial_d \Psi^{\perp}$, since $\psi = \int \Psi \diff x_d$ does not depend on $x_d$; moreover, as a map in the $x_d$-coordinate, $T^i$ is a translation for all $i \geq 1$, hence $\partial_d (\Psi \circ T^i) = \partial_d\Psi \circ T^i$.
\end{remark}


\subsection{The Mixing Criterion}

Let $Q= \prod_{i=1}^d [w_i, w_i\rq{}] \times [0,h]$ be a given cube. 
Choose $\delta_0 \in (0,1)$ such that $(1-\delta_0)(1 - D\rq{}\delta_0^{d\rq{}}-m\delta_0) \geq 1-\delta$, where $D\rq{}, d\rq{}$ are given by Lemma \ref{lemmaavil} w.r.t.~$\mnorm{\cdot}$, with $\mnorm{\sum_{\modulo{j}\leq m} \alpha_j e(jz)} = \max_j \modulo{\alpha_j}$. Let $\varepsilon_0, N_0, C_0$ be chosen appropriately as in \cite[\S4.5]{avila:heisenberg}; let $\chi$ be a continuous function such that
\begin{equation}\label{eq:defofchi}
\chi(\mathbf{x}) = 
\begin{cases}
1 & \text{if } \mathbf{x} \in \prod_{i=1}^{d-1} [w_i, w_i\rq{}] \times [w_d+\varepsilon_0(w_d'-w_d),w_d'-\varepsilon_0(w_d'-w_d)],\\
0 & \text{if } \mathbf{x} \notin \prod_{i=1}^{d-1} [w_i, w_i\rq{}] \times [w_d+\varepsilon_0/2(w_d'-w_d),w_d'-\varepsilon_0/2(w_d'-w_d)].
\end{cases}
\end{equation}
Finally, let $t_0>0$ be such that for all $t \geq t_0$ we have $\misura(\T^{d-1} \setminus \widehat{X}(t,C_0))\leq \delta_0$. Set $\widehat{X}(t,C_0) = \widehat{X}(t)$.

We recall \eqref{eq:psienne},
$$
S_n(\Psi^{\perp})(\widehat{\mathbf{x}} ,x_d) = \sum_{0 < \modulo{l} \leq m} c_{l,n}(\widehat{\mathbf{x}} )  e(l x_d ),
$$
and denote $\clznp (\widehat{\mathbf{x}} )= 2 \pi i l \clzn (\widehat{\mathbf{x}} )$ so that we can write
\begin{equation}\label{eq:f1}
S_n(\partial_d \Psi)(\widehat{\mathbf{x}} ,x_d) = S_n(\partial_d \Psi^{\perp})(\widehat{\mathbf{x}} ,x_d)= \sum_{0< \modulo{l} \leq m} \clznp(\widehat{\mathbf{x}} )e(l x_d).
\end{equation}
Let 
$$
\ptx{0} = \left\{ (\widehat{\mathbf{x}} ,x_d) \in \{\widehat{\mathbf{x}} \} \times \T : \modulo{S_{\ntyx}(\partial_d \Psi)(\widehat{\mathbf{x}} ,x_d)} \geq \delta_0 \mnorm{S_{\ntyx}(\partial_d \Psi)} \right\},
$$
which is a union of intervals in the $x_d$-coordinate, since, for fixed $n$, $S_n(\Psi^{\perp})(\widehat{\mathbf{x}} , \cdot)$ is a polynomial in $x_d$ of degree $m$. Let $\ptx{1}$ be the partial partition obtained by discarding form $\ptx{0}$ all intervals of length less than $\delta_0$.
By Lemma \ref{lemmaavil}, we have $\misura( \ptx{0}) \geq 1 - D\rq{}\delta_0^{d\rq{}}$. Again, since $S_{\ntyx}(\partial_d \Psi)(\widehat{\mathbf{x}} ,x_d)$ is a trigonometric polynomial of degree $m$, there are at most $2m$ points in each level set; therefore $\ptx{1}$ is obtained from $\ptx{0}$ by removing at most $m$ intervals of length smaller than $\delta_0$. The size of the partial partition $\ptx{1}$ satisfies
$$
\misura(\ptx{1}) \geq 1 - D\rq{}\delta_0^{d\rq{}}-m\delta_0,
$$
thus, by Fubini\rq{}s Theorem,
\begin{equation}\label{eq:f2}
\misura_{d-1} \left( \bigcup_{\widehat{\mathbf{x}}  \in \widehat{X}(t)} \ptx{1} \right) \geq (1-\delta_0)(1 - D\rq{}\delta_0^{d\rq{}}-m\delta_0) \geq 1-\delta,
\end{equation}
by the choice of $\delta_0$.

The following lemma ensures that on each element of the partition the stretch is large enough. For all $I \in \ptx{1}$, denote by $\underline{n}_t(I) = \min_{x_d} n_t(\widehat{\mathbf{x}} ,x_d)$, $\overline{n}_t(I) = \max_{x_d} n_t(\widehat{\mathbf{x}} ,x_d)$, and $\Delta n_t(I) =  \overline{n}_t(I) - \underline{n}_t(I) +1$.
\begin{lemma}
For all $I \in \ptx{1}$ we have that 
\begin{align}
& \modulo{S_{\ntyx}(\partial_d \Psi)(\widehat{\mathbf{x}} ,x_d)} \geq \frac{\pi \delta_0}{m}C_0, \text{\ \ \ for all } (\widehat{\mathbf{x}} ,x_d) \in I; \label{eqq}\\
& \modulo{S_{\ntyx}(\partial_d \Psi)(\widehat{\mathbf{x}} ,x_d)} \geq \frac{\delta_0}{2m} \modulo{S_{\ntyx}(\partial_d \Psi)(\widehat{\mathbf{x}} ,x_d')} , \text{\ \ \ for all } (\widehat{\mathbf{x}} ,x_d), (\widehat{\mathbf{x}} ,x_d') \in I. \label{eqqq}
\end{align}
\end{lemma}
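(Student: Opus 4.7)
The plan is to observe that both estimates are consequences of the defining inequality of the partial partition $\ptx{0}$, combined with straightforward triangle-inequality manipulations on the Fourier expansions \eqref{eq:psienne} and \eqref{eq:f1}, together with the explicit coefficient relation $\clznp(\widehat{\mathbf{x}}) = 2\pi i l\,\clzn(\widehat{\mathbf{x}})$. Since $I \in \ptx{1} \subseteq \ptx{0}$, for every $(\widehat{\mathbf{x}}, x_d) \in I$ we automatically have
$$
\modulo{S_{\ntyx}(\partial_d \Psi)(\widehat{\mathbf{x}}, x_d)} \geq \delta_0 \mnorm{S_{\ntyx}(\partial_d \Psi)},
$$
so each claim reduces to bounding $\mnorm{S_{\ntyx}(\partial_d \Psi)}$ in a suitable way.

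For \eqref{eqq}, it suffices to prove the lower bound $\mnorm{S_{\ntyx}(\partial_d \Psi)} \geq \pi C_0 / m$. I would use the hypothesis $\widehat{\mathbf{x}} \in \widehat{X}(t) = \widehat{X}(t, C_0)$: by definition there exists $x_d^{\ast} \in \T$ such that $\modulo{S_{\ntyx}(\Psi^{\perp})(\widehat{\mathbf{x}}, x_d^{\ast})} > C_0$. The Fourier expansion \eqref{eq:psienne} of $S_{\ntyx}(\Psi^{\perp})$ has at most $2m$ nonzero terms, so the triangle inequality evaluated at $x_d^{\ast}$ yields $\max_{0 < \modulo{l} \leq m} \modulo{\clzn(\widehat{\mathbf{x}})} \geq C_0/(2m)$. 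Using $\modulo{\clznp(\widehat{\mathbf{x}})} = 2\pi \modulo{l}\,\modulo{\clzn(\widehat{\mathbf{x}})} \geq 2\pi \modulo{\clzn(\widehat{\mathbf{x}})}$ for every $l \neq 0$, I would then conclude $\mnorm{S_{\ntyx}(\partial_d \Psi)} \geq 2\pi \cdot C_0/(2m) = \pi C_0/m$, which combined with the defining inequality of $\ptx{0}$ gives \eqref{eqq}.

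For \eqref{eqqq}, I would compare the two sides by bounding each of them in terms of $\mnorm{S_{\ntyx}(\partial_d \Psi)}$. The defining inequality of $\ptx{0}$ provides the lower bound at $(\widehat{\mathbf{x}}, x_d) \in I$, while the triangle inequality applied to \eqref{eq:f1} at the other point gives the upper bound
$$
\modulo{S_{\ntyx}(\partial_d \Psi)(\widehat{\mathbf{x}}, x_d')} \leq \sum_{0 < \modulo{l} \leq m} \modulo{\clznp(\widehat{\mathbf{x}})} \leq 2m\,\mnorm{S_{\ntyx}(\partial_d \Psi)}.
$$
Dividing the first inequality by the second gives exactly \eqref{eqqq}.

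I do not anticipate any serious obstacle: the argument is purely formal. The only care required is in tracking the factor $2m$ coming from the number of non-zero indices $l$ with $0 < \modulo{l} \leq m$, and in exploiting that the weights $2\pi\modulo{l}$ in the relation between $\clznp$ and $\clzn$ are bounded below by $2\pi$ whenever $l \neq 0$.
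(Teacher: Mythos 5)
Your proof is correct and follows essentially the same route as the paper: both parts hinge on the defining inequality of $\ptx{0}$, the triangle inequality applied to the $2m$-term expansion \eqref{eq:f1}, the relation $\modulo{\clznp} = 2\pi\modulo{l}\,\modulo{\clzn} \geq 2\pi\modulo{\clzn}$, and (for \eqref{eqq}) the membership $\widehat{\mathbf{x}} \in \widehat{X}(t,C_0)$. In fact your write-up is slightly cleaner than the paper's, which contains a small typo in quoting the definition of $\widehat{X}(t)$ (writing $\partial_d\Psi$ where $\Psi^{\perp}$ is intended) that you implicitly corrected.
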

\begin{proof}
From \eqref{eq:f1}, for all $(\widehat{\mathbf{x}},x_d) \in \T^d$ and $n \geq 1$ we have that 
$$
\modulo{S_n(\partial_d \Psi)(\widehat{\mathbf{x}} ,x_d)} \leq (2 m) \max_{0<\modulo{l} \leq m} \modulo{c_{l,n}'(\widehat{\mathbf{x}})};
$$
hence, from the definition of $\ptx{1} \subset \ptx{0}$,
$$
\min_{x_d \in \T}\modulo{S_n(\partial_d \Psi)(\widehat{\mathbf{x}} ,x_d)} \geq \frac{\delta_0}{2m} \max_{x_d \in \T}\modulo{S_n(\partial_d \Psi)(\widehat{\mathbf{x}} ,x_d)}.
$$
This proves \eqref{eqq}. Moreover, by definition of $\widehat{X}(t)$, there exists $\mathbf{x} = (\widehat{\mathbf{x}} ,x_d)$ for which $| S_{\ntyx}(\partial_d \Psi)(\mathbf{x}) | \geq C_0$. Thus, $\max_{0<\modulo{l} \leq m} | c_{l,n}(\widehat{\mathbf{x}}) | \geq C_0/(2m)$, so that $\max_{0<\modulo{l} \leq m} | c_{l,n}'(\widehat{\mathbf{x}}) | \geq 2 \pi \max_{0<\modulo{l} \leq m} | c_{l,n}(\widehat{\mathbf{x}}) | \geq \pi C_0/m$. We conclude \eqref{eqqq} from the definition of $\ptx{0}$.
\end{proof}
From the previous estimates, it is possible to deduce the following properties; for the proof we refer to \cite[Lemmas 11,12]{avila:heisenberg}.
\begin{lemma}[{\cite[Lemmas 11,12]{avila:heisenberg}}]
For all $I \in \ptx{1}$ and for all $\underline{n}_t(I) \leq n \leq \overline{n}_t(I)$, we have 
$$
\frac{1}{2} \modulo{S_{\ntyx}(\partial_d \Psi)(\widehat{\mathbf{x}} ,x_d)} \leq \modulo{S_{n}(\partial_d \Psi)(\widehat{\mathbf{x}} ,x_d)} \leq \frac{3}{2} \modulo{S_{\ntyx}(\partial_d \Psi)(\widehat{\mathbf{x}} ,x_d)} \text{\ \ \ for all } (\widehat{\mathbf{x}} ,x_d) \in I.
$$
Moreover, the function $x_d \mapsto n_t(\widehat{\mathbf{x}} ,x_d)$ is monotone and $\Delta n_t(I) \geq \pi \delta_0^2C_0 / (2m \min \Psi)$.
\end{lemma}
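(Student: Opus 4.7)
The plan is to prove the three conclusions of the lemma in parallel via a single bootstrap argument, exploiting our freedom to choose the constant $C_0$ in the Mixing Criterion arbitrarily large at the outset, following the pattern of \cite[Lemmas 11, 12]{avila:heisenberg}.

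For the sandwich estimate, the starting point is the telescoping identity
$$
S_n(\partial_d \Psi) - S_{\ntyx}(\partial_d \Psi) = \pm \sum_{j = \min(n, \ntyx)}^{\max(n, \ntyx) - 1} (\partial_d \Psi) \circ T^j,
$$
giving $\modulo{S_n(\partial_d \Psi) - S_{\ntyx}(\partial_d \Psi)} \leq (\Delta n_t(I) - 1)\,\norma{\partial_d \Psi}_\infty$ for every $n$ in the range $[\underline{n}_t(I), \overline{n}_t(I)]$. I would then derive an a priori upper bound on $\Delta n_t(I)$ by combining the definition of $n_t$ (which forces $(\Delta n_t(I) - 1)\min \Psi \leq \Delta S_{\ntyx}(\Psi)(I) + \max \Psi$), the mean value inequality $\Delta S_{\ntyx}(\Psi)(I) \leq |I|\cdot \max_{I}\modulo{\partial_d S_{\ntyx}(\Psi)}$, and the comparison $\max_I \modulo{\partial_d S_{\ntyx}(\Psi)} \leq (2m/\delta_0)\min_I \modulo{\partial_d S_{\ntyx}(\Psi)}$ provided by \eqref{eqqq} together with the construction of $\ptx{0}$. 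Choosing $C_0$ sufficiently large at the outset makes the correction $(\Delta n_t(I)-1)\norma{\partial_d \Psi}_\infty$ smaller than $\tfrac{1}{2}\min_I\modulo{S_{\ntyx}(\partial_d \Psi)}$, which combined with \eqref{eqq} yields the desired $\tfrac{1}{2}$-$\tfrac{3}{2}$ sandwich.

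Once the sandwich is available, $\partial_d S_n(\Psi)=S_n(\partial_d \Psi)$ has definite and fixed sign on $I$ for every $n\in[\underline{n}_t(I), \overline{n}_t(I)]$ --- the sign being that of $\partial_d S_{\ntyx}(\Psi)$, which is constant on $I$ by the definition of $\ptx{0}$. Consequently $S_n(\Psi)(\widehat{\mathbf{x}},\cdot)$ is strictly monotone on $I$ for each such $n$, and since $n_t$ is determined by $S_{n_t}(\Psi)\leq t<S_{n_t+1}(\Psi)$, the map $x_d\mapsto n_t(\widehat{\mathbf{x}},x_d)$ is itself monotone on $I$. For the lower bound on $\Delta n_t(I)$: the sandwich gives $\modulo{\partial_d S_n(\Psi)}\geq \tfrac{1}{2}\pi\delta_0 C_0/m$ on $I$ for every $n$ in the range; the mean value theorem applied with $|I|\geq \delta_0$ yields $\Delta S_n(\Psi)(I)\geq \tfrac{1}{2}\pi\delta_0^2 C_0/m$; and inserting this into the two-sided comparison between $\Delta S_n(\Psi)(I)$ and $\Delta n_t(I)$ (up to boundary corrections bounded by $\max\Psi$, absorbed into the constants when $C_0$ is large) produces the required inequality.

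The main obstacle is untangling the circularity of the bootstrap: the sandwich estimate needs an a priori bound on $\Delta n_t(I)$, while the sought lower bound on $\Delta n_t(I)$ in turn relies on the sandwich. The resolution is that the a priori upper bound on $\Delta n_t(I)$ via \eqref{eqqq} and the sought lower bound are both comparable to $C_0$ up to fixed multiplicative constants depending only on $\delta_0$, $m$ and $\Psi$; choosing $C_0$ large enough from the very start of the Mixing Criterion argument makes the error terms subordinate to the main terms and closes the loop. The extension from the two-dimensional setting of \cite{avila:heisenberg} to our higher-dimensional framework introduces no new conceptual difficulty at this step, since only the product coordinate structure $(\widehat{\mathbf{x}},x_d)$ plays a role here.
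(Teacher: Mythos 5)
The paper does not give its own proof of this lemma (it is cited verbatim from \cite[Lemmas 11,12]{avila:heisenberg}), so I can only assess your proposal on its own merits. Your high-level plan (telescoping the Birkhoff sum of $\partial_d\Psi$ against an a priori control on $\Delta n_t(I)$, then using the sandwich to get monotonicity and the lower bound) is the natural one, but the step that is supposed to close the circle does not close.

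Concretely: your a priori bound on $\Delta n_t(I)$ is
$$(\Delta n_t(I)-1)\min\Psi \;\leq\; \Delta S_{\ntyx}(\Psi)(I)+\max\Psi \;\leq\; \modulo{I}\,\frac{2m}{\delta_0}\,\min_I\modulo{S_{\ntyx}(\partial_d\Psi)}+\max\Psi,$$
so the telescoping error you need to dominate is
$$(\Delta n_t(I)-1)\,\norma{\partial_d\Psi}_\infty \;\leq\; \frac{2m\,\modulo{I}\,\norma{\partial_d\Psi}_\infty}{\delta_0\min\Psi}\,\min_I\modulo{S_{\ntyx}(\partial_d\Psi)} \;+\; \frac{\norma{\partial_d\Psi}_\infty\max\Psi}{\min\Psi}.$$
The second (additive) term is indeed killed by taking $C_0$ large, since $\min_I\modulo{S_{\ntyx}(\partial_d\Psi)}\geq \pi\delta_0 C_0/m$ by \eqref{eqq}. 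But the first term is a \emph{fixed multiple} of $\min_I\modulo{S_{\ntyx}(\partial_d\Psi)}$: it scales linearly with $C_0$ exactly as the right-hand side $\tfrac12\min_I\modulo{S_{\ntyx}(\partial_d\Psi)}$ does. Increasing $C_0$ therefore does not make it subordinate; what you actually need is the purely $C_0$-independent inequality $\frac{2m\modulo{I}\norma{\partial_d\Psi}_\infty}{\delta_0\min\Psi}\leq\frac14$. Since $\modulo{I}\geq\delta_0$ by construction of $\ptx{1}$ and $\modulo{I}$ can be as large as $1$, this reduces at best to a restriction of the form $\norma{\partial_d\Psi}_\infty\lesssim\min\Psi/m$, which is an assumption on the roof function that you are not entitled to. You explicitly acknowledge the circularity, but the resolution you propose — ``$C_0$ large enough'' — cannot resolve it, because $C_0$ appears on both sides in the same degree.

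Two smaller issues. First, $\ntyx$ is the minimum of $n_t(\widehat{\mathbf{x}},\cdot)$ over all of $\T$, whereas $\underline{n}_t(I)$ is the minimum over $I$; the telescoping error for $n\in[\underline{n}_t(I),\overline{n}_t(I)]$ is bounded by $(\overline{n}_t(I)-\ntyx)\norma{\partial_d\Psi}_\infty$, not by $(\Delta n_t(I)-1)\norma{\partial_d\Psi}_\infty$, and the gap $\underline{n}_t(I)-\ntyx$ needs its own control. Second, the mean-value step $\Delta S_{\ntyx}(\Psi)(I)\leq\modulo{I}\max_I\modulo{\partial_d S_{\ntyx}(\Psi)}$ sees the whole length $\modulo{I}$, which is the source of the troublesome factor; a sharper route (e.g.\ bounding the oscillation of $S_{\ntyx}(\Psi^{\perp})(\widehat{\mathbf{x}},\cdot)$ over $\T$ through the norm $\mnorm{\cdot}$ and the definition of $\ptx{0}$) removes $\modulo{I}$ but still leaves a $C_0$-independent constraint tying $\delta_0$, $m$ and $\norma{\partial_d\Psi}_\infty$ together, so the dependence among the constants $\delta_0,\varepsilon_0,C_0$ has to be unpacked from \cite[\S4.5]{avila:heisenberg} in more detail than you do. As written, the proposal leaves the central estimate unproven.
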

Let us subdivide each interval $I \in \ptx{1}$ into $\Delta n_t(I)$ subintervals on which $x_d \mapsto n_t(\widehat{\mathbf{x}} ,x_d)$ is locally constant and let us group them into $N_t(I) + 1$ groups, the first $N_t(I)$ of which made by $N_t(I)$ consecutive intervals, where $N_t(I) = \lfloor \sqrt{\Delta n_t(I)} \rfloor$. Denote by $\ptx{m}$ the partition into intervals $J$ obtained in this way. The estimate on the total measure \eqref{eq:f2} still holds. Moreover, each $J \in \ptx{m}$ satisfies the following properties, which can be proved using the estimates on the stretch and on the size of the intervals, see \cite[Lemma 13]{avila:heisenberg}. 
\begin{lemma}[{\cite[Lemma 13]{avila:heisenberg}}]
For each $J \in \ptx{m}$, for all $(\widehat{\mathbf{x}} ,x_d), (\widehat{\mathbf{x}} ,x_d') \in J$ and all $\underline{n}_t(J) \leq n \leq \overline{n}_t(J)$ we have
\begin{align}
& \modulo{\frac{\Delta n_t(J)}{\Delta S_{\underline{n}_t(J) }(J)} - 1} \leq \varepsilon_0; \label{eq:final1} \\
& \frac{1}{\Delta n_t(J)} \sum_{n = \underline{n}_t(J)}^{\overline{n}_t(J)} \chi \circ T^n(\widehat{\mathbf{x}} ,x_d) \geq (1-\varepsilon_0)^2 \prod_{i=1}^d (w_i'-w_i); \label{eq:final2} \\
& \misura_1(J) \leq \frac{w_d'-w_d}{2}\varepsilon_0; \label{eq:final3}\\
& \modulo{\frac{\Delta S_{\underline{n}_t(J) }(J)}{\Delta S_{n}(J)} -1} \leq \varepsilon_0. \label{eq:final4}
\end{align}
Moreover, denoting $J_n^h = \{ (\widehat{\mathbf{x}} ,x_d) \in J : t-h  < S_n(\Psi) (\widehat{\mathbf{x}} ,x_d) \leq t\}$, we have
\begin{equation}\label{eq:final5}
\modulo{\frac{\Delta S_{n }(J) \misura_1 (J_n^h)}{\misura_1(J) h} -1} \leq \varepsilon_0.
\end{equation}
\end{lemma}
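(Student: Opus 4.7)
The plan is to leverage the decomposition $\Psi=\psi_{d-1}\circ\pi_{d-1}+\Psi^{\perp}_d$ together with the hypothesis that $\psi_{d-1}$ is smoothly cohomologous to a constant. Writing $\psi_{d-1}-\int\Psi=u\circ T_{d-1}-u$ for some smooth $u\colon\T^{d-1}\to\R$ and normalizing $\int\Psi=1$, one obtains
\begin{equation*}
S_n(\Psi)(\widehat{\mathbf{x}},x_d)=n+u(T_{d-1}^n\widehat{\mathbf{x}})-u(\widehat{\mathbf{x}})+S_n(\Psi^{\perp}_d)(\widehat{\mathbf{x}},x_d).
\end{equation*}
In particular, the entire $x_d$-dependence of $S_n(\Psi)$ lives in $S_n(\Psi^{\perp}_d)$, so $\partial_d S_n(\Psi)=S_n(\partial_d\Psi^{\perp}_d)$ and $\Delta S_n(\Psi)(J)=\Delta S_n(\Psi^{\perp}_d)(J)$. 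On $I\in\ptx{1}$, the preceding lemma yields $\tfrac12|\partial_d S_{\underline{n}_t(I)}(\Psi)|\leq|\partial_d S_n(\Psi)|\leq\tfrac32|\partial_d S_{\underline{n}_t(I)}(\Psi)|$ with $|\partial_d S_{\underline{n}_t(I)}(\Psi)|\gtrsim C_0$ for every $n\in[\underline{n}_t(I),\overline{n}_t(I)]$; the refinement $J\in\ptx{m}$ into groups of $N_t(I)\approx\sqrt{\Delta n_t(I)}$ consecutive $n_t$-constant intervals ensures $\Delta n_t(J)\leq N_t(I)+1\ll\Delta n_t(I)$.

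For \eqref{eq:final1} I would evaluate $S_{\underline{n}_t(J)}(\Psi)$ at the two endpoints $\mathbf{x}_L,\mathbf{x}_R\in J$ realizing $\underline{n}_t(J)$ and $\overline{n}_t(J)$. Applying the cocycle identity, the displayed formula above, and the definition of $n_t$ (which forces $S_{n_t(\mathbf{x})}(\Psi)(\mathbf{x})\in(t-\max\Psi,t]$), a direct computation gives
\begin{equation*}
\Delta S_{\underline{n}_t(J)}(\Psi)(J)=(\Delta n_t(J)-1)+O\bigl(1+\norma{u}_\infty+|S_{\Delta n_t(J)-1}(\Psi^{\perp}_d)(T^{\underline{n}_t(J)}\mathbf{x}_R)|\bigr).
\end{equation*}
Since $\Delta n_t(J)\geq N_t(I)\to\infty$ as $t\to\infty$ (after choosing $C_0$ large) and Theorem \ref{th:stretchingroof} controls the $\Psi^{\perp}_d$-Birkhoff sum on a set of large measure, the ratio in \eqref{eq:final1} is $1+O(\varepsilon_0)$. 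Estimate \eqref{eq:final4} then follows from the bound $|\Delta S_n(\Psi)(J)-\Delta S_{\underline{n}_t(J)}(\Psi)(J)|\leq\norma{\partial_d\Psi}_\infty\cdot\Delta n_t(J)\cdot|J|$, whose ratio with $\Delta S_{\underline{n}_t(J)}(\Psi)(J)\gtrsim\Delta n_t(J)$ is $O(|J|)$, hence $O(\varepsilon_0)$ by \eqref{eq:final3}.

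Claim \eqref{eq:final2} is an immediate application of unique ergodicity of $T$: by construction of $\chi$ in \eqref{eq:defofchi}, $\int\chi\geq(1-\varepsilon_0)\prod_{i=1}^d(w_i'-w_i)$, and the Birkhoff averages along the orbit segment of length $\Delta n_t(J)\geq N_t(I)\to\infty$ converge to $\int\chi$ uniformly. For \eqref{eq:final3}, the derivative lower bound on $J$ gives $|J|\lesssim\Delta S_{\underline{n}_t(J)}(\Psi)(J)/C_0\lesssim\Delta n_t(J)/C_0$; combined with $\Delta n_t(J)\leq N_t(I)\approx\sqrt{\Delta n_t(I)}$ and $|\partial_d S_{\underline{n}_t(I)}(\Psi)|\approx\Delta n_t(I)/|I|$, this yields $|J|=O(1/\sqrt{\Delta n_t(I)})\to 0$ uniformly as $t\to\infty$. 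Finally, \eqref{eq:final5} follows from the near-affine behaviour of $S_n(\Psi)$ on $J$: since the $x_d$-derivative is essentially constant by \eqref{eq:final4}, the preimage of a length-$h$ interval inside $S_n(\Psi)(J)$ has length $h\cdot|J|/\Delta S_n(\Psi)(J)\cdot(1+O(\varepsilon_0))$.

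The main obstacle is \eqref{eq:final1}: ensuring uniformly on a family of intervals of large measure that the correction term $S_{\Delta n_t(J)-1}(\Psi^{\perp}_d)$ is dominated by the principal term $\Delta n_t(J)$ requires a delicate tuning of the parameters $\delta_0,\varepsilon_0,C_0,t_0$ and a careful use of Theorem \ref{th:stretchingroof}, following the strategy of \cite{avila:heisenberg}. Once \eqref{eq:final1} is secured, the remaining four estimates reduce to elementary bookkeeping with the derivative control and unique ergodicity.
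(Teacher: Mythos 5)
Your general outline is the right one and matches the argument of \cite[Lemma 13]{avila:heisenberg}, to which the paper delegates this proof: write $\Psi=\psi_{d-1}\circ\pi_{d-1}+\Psi^{\perp}_d$, reduce $\psi_{d-1}$ to a constant plus a coboundary, compute the stretch $\Delta S_{\underline{n}_t(J)}(\Psi)(J)$ by evaluating at the extreme points of $J$ using the cocycle identity and the pinching $S_{n_t(\mathbf{x})}(\Psi)(\mathbf{x})\in(t-\max\Psi,t]$, and then combine the derivative comparability estimates from the preceding lemma with unique ergodicity. Your reductions of \eqref{eq:final3}, \eqref{eq:final4} and \eqref{eq:final5} to \eqref{eq:final1} and the derivative bounds are correct (note that monotonicity of $S_n(\Psi)$ on $I$, hence on $J$, which you use implicitly to identify the extremal points of $S_{\underline{n}_t(J)}(\Psi)$ with the $n_t$-extremal points, follows from \eqref{eqq} and the comparability lemma), and \eqref{eq:final2} is indeed uniform unique ergodicity applied to $\chi$ over orbit segments of length $\Delta n_t(J)$.

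There is, however, a misattribution at the heart of your argument for \eqref{eq:final1}. You invoke Theorem~\ref{th:stretchingroof} to dominate the error term $S_{\Delta n_t(J)-1}(\Psi^{\perp}_d)(T^{\underline{n}_t(J)}\mathbf{x}_R)$, but that theorem goes the \emph{wrong way}: it asserts $\misura_d(|S_n(\Psi^{\perp})|<C)\to 0$, i.e.\ the Birkhoff sums of $\Psi^{\perp}_d$ become \emph{large} in measure. What you need here is a pointwise \emph{upper} bound of the form $|S_{\Delta n_t(J)-1}(\Psi^{\perp}_d)|\leq\tfrac{\varepsilon_0}{10}\,\Delta n_t(J)$. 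The correct tool is the uniform ergodic (Oxtoby) theorem: since $T$ is uniquely ergodic and $\int_{\T^d}\Psi^{\perp}_d\,\diff\misura_d=0$, one has $\sup_{\mathbf{x}\in\T^d}\tfrac1n|S_n(\Psi^{\perp}_d)(\mathbf{x})|\to 0$; one picks $N_0$ so that this supremum is below $\varepsilon_0/10$ for all $n\geq N_0$, and then takes $C_0$ large enough that $\Delta n_t(J)\geq \lfloor\sqrt{\Delta n_t(I)}\rfloor\geq\sqrt{\pi\delta_0^2C_0/(2m\min\Psi)}-1\geq N_0$. This estimate is pointwise and uniform in $\mathbf{x}$, so there is no exceptional set to worry about here, contrary to what your phrase ``on a family of intervals of large measure'' suggests. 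Theorem~\ref{th:stretchingroof} enters this section only through the lemma on $\widehat{X}(t,C)$, where it guarantees that the stretch is large on a set of nearly full measure. Once you replace your reference to Theorem~\ref{th:stretchingroof} by the uniform ergodic theorem, the argument is sound.
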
  
It remains to prove \eqref{eq:mixingestimate} of the Mixing Criterion. 
By definition, $J_n^h$ is the set of points in $J$ that after time $t$ undergo exactly $n$ iterations of $T$ (recall that $h < \min \Psi$) and are mapped inside $\T^d \times [0,h]$. In particular, for different values of $n$, they are all disjoint. If, for $J = \{\widehat{\mathbf{x}} \} \times (x_d',x_d'') \in \ptx{m}$, we have that $\chi(T^n (\widehat{\mathbf{x}} ,x_d') ) >0$, by the estimate \eqref{eq:final3} on the size of $J$ and the definition of $\chi$ \eqref{eq:defofchi}, it follows that $T^n (\widehat{\mathbf{x}} ,x_d) \in \prod_i [w_i,w_i']$ for all $ (\widehat{\mathbf{x}} ,x_d) \in J_n^h$ and thus $T_t^{\Psi}(\widehat{\mathbf{x}} ,x_d) \in Q$. We deduce that 
$$
\misura_1 ( J \cap T^{\Psi}_{-t}(Q)) \geq \sum_{n=\underline{n}_t(J)}^{\overline{n}_t(J)} \chi \circ T^n(\widehat{\mathbf{x}} ,x_d') \misura_1(J_n^h).
$$
Using \eqref{eq:final1}, \eqref{eq:final2}, \eqref{eq:final4} and \eqref{eq:final5}, we conclude
\begin{equation*}
\begin{split}
& \sum_{n=\underline{n}_t(J)}^{\overline{n}_t(J)} \chi \circ T^n(\widehat{\mathbf{x}} ,x_d') \misura_1(J_n^h) \\
& = \frac{1}{\Delta n_t(J)} \sum_{n = \underline{n}_t(J)}^{\overline{n}_t(J)} \chi \circ T^n(\widehat{\mathbf{x}} ,x_d) \frac{\Delta n_t(J)}{\Delta S_{\underline{n}_t(J) }(J)} \frac{\Delta S_{\underline{n}_t(J) }(J)}{\Delta S_{n}(J)} \frac{\Delta S_{n }(J) \misura_1 (J_n^h)}{\misura_1(J) h} h \misura_1(J) \\
& \geq (1-\varepsilon_0)^5 h \misura(J) \prod_{i=1}^d w_i'-w_i =  (1-\varepsilon_0)^5 \misura(J) \misura(Q).
\end{split}
\end{equation*}

\end{document}